\newcommand{\clv}{\color{violet}}
 \title{\textbf{Directed polymers on infinite graphs}}
\newcommand{\IP}{{\mathbb P}}
\newcommand{\IE}{{\mathbb E}}
\newcommand{\DP}{{\mathrm P}}
\newcommand{\DE}{{\mathrm E}}
\newcommand{\IEtilde}{\widetilde{\IE}}
\newcommand{\IPtilde}{\widetilde{\IP}}
\renewcommand{\b}{\beta}
\newcommand*\Z{\mathbb{Z}}
\newcommand{\cvlaw}{\stackrel{{ (d)}}{\longrightarrow}}
\newcommand{\eqlaw}{\stackrel{(d)}{=}}
\newcommand*\cvIP{\overset{\IP}{\longrightarrow}}
\newcommand{\dd}{\mathrm{d}}
\newtheorem{theorem}{Theorem}[section]
\newtheorem{lemma}[theorem]{Lemma}
\newtheorem{proposition}[theorem]{Proposition}
\newtheorem{corollary}[theorem]{Corollary}
\newtheorem*{theorem*}{Theorem}
\newtheorem*{lemme*}{Lemme}
\newtheorem*{proposition*}{Proposition}
\newtheorem{assumption}[theorem]{Assumption}
\newtheorem{remark}[theorem]{Remark}
\theoremstyle{definition}
\newtheorem{definition}[theorem]{Definition}
\newtheorem{question}{Question}[section]
   \def\MR#1{}  }
    \newcommand{\CC}{\textcolor{blue}}
     \newcommand{\IS}{\textcolor{violet}}
\author{Cl\'ement Cosco, Inbar Seroussi, Ofer Zeitouni
}
\date{}
\begin{document}
\maketitle
\begin{abstract}
  We study the directed polymer model for general graphs (beyond $\mathbb Z^d$)
  and random walks.
  We provide sufficient conditions for the existence or
  non-existence of
  a weak disorder phase, of an $L^2$ region, and
   of very strong disorder, in terms of properties of
  the graph and of the random walk. We study in some  detail
  (biased) random walk on various trees including the
  Galton Watson trees, and provide  a range of other examples
  that illustrate counter-examples to intuitive extensions of the
  $\mathbb Z^d$/SRW result.
\end{abstract}

\section{Introduction}
The model of polymers in random environment, that is of random walk
that is weighted by a random time-space field, has a long history
in statistical physics, both on its own right and as a tool in understanding
interfaces, see \cite{HuHe85} for an early occurence. It soon appeared also
in the mathematical literature, see \cite{ImbrieSpencer88,Bolth89}. We
refer to
\cite{CStFlour} for a recent overview of
the subject from a mathematical perspective,
and a concise history. Most of the mathematical work has
focused on the model where
the walk associated to the polymer is a symmetric random walk
on the lattice $\mathbb Z^d$or on approximations of the walk on the lattice, such as downward paths on trees \cite{DerridaSpohn88,EBuffet1993Dpot}, diffusions on the $d$-dimensional discrete torus \cite{EckmannJ.1989TlLe} or on the cylinder \cite{BrunetE2000Pdot}, or simple random walk on the complete graph \cite{CoFlRa19}. In either case,
the study of
the polymer
is closely related,
via the Feynman-Kac representation,
to the study of
a stochastic heat equation (SHE) on the underlying lattice/tree.

Recently,  as part of a  study of
stochastic dynamics equivalent to the SHE on large 
networks,
Sochen and the second author
\cite{seroussi2018spectral}
discussed the effect of the underlying network topology on the dynamics.
Using dynamic field theory, the multiplicative noise can
be translated to an interaction term
between the eigen-functions of the graph Laplacian.
The second moment of the solution is then calculated using expansion
in these eigen-functions. Similar to the lattice topology,
for transitive graphs the different phases of the model (defined below)
depend on the spectral dimension of the graph.

Motivated by that work, we study in this paper  how key notions that have been
developed in the study of directed polymers on $\mathbb Z^d$
translate to the situation where the associated walk is defined on
various infinite graphs.
Of  particular interest is the case where
the underlying graph is itself random (such as various percolation models),
or at least irregular, and the relation between the
transience or recurrence of the random walk on the graph
and the phase transitions among different regimes.
As we will see, new phenomena emerge, and the structure of the
underlying graph
has an important effect on the behavior of the polymer.
Naturally, we emphasize these aspects of the theory.
Our goal in this paper is to initiate the study of
these interesting models and raise new questions, rather
than providing complete answers to all models. The conclusion and open
problem section \ref{sec-open} states several open questions that we
find of interest.

We mention two other papers that adopt a similar point of view. Polymers for which the underlying walk is a general Markov chain have been studied in \cite{CaGuHuMe04}, in the specific case where the chain is positive recurrent. For the related model of the parabolic Anderson model (PAM) (which studies the SHE equation when the noise only depends on time), the recent  \cite{PAMonGW_HoKoSa20} focuses on the PAM on Galton-Watson trees and locally tree-like structures such as the configuration model.

In the rest of this introduction, we
explicitly introduce the polymer model, define the different phases,
and state some general theorems concerning the existence and properties of various phases. These are easy extensions of the standard results for the case
of simple random walk on the lattice. We also introduce certain graphs that
will be a good source of counter-examples. Our main results are stated in
Section \ref{sec-2}. Section \ref{sec-spec} introduces three
classes of graphs with associated Markov chains,
that are used to illustrate various features and are interesting on their own rights. Those are the lattice super-critical percolation cluster, the
biased walk on Galton-Watson trees, and the canopy graph.
The proofs of all statements appear in Sections \ref{sec-3}--\ref{sec-5}.
Section \ref{sec-open} contains concluding remarks and the statement of several open problems.
\subsection{The polymer model}
 To
set the stage for a description of our results, we begin by introducing our
model of random polymer.
Let $G=(V,E)$ denote a connected (undirected)
graph with (infinite) vertex set $V$ and
set of edges $E\subset V\times V$. We let $d(x,y)$ denote the graph distance
between $x,y\in V$, i.e. the legth of the shortest path connecting $x,y$.

Associated with the graph is a nearest-neighbor discrete time
Markov chain $S=(S_k)_{k\geq 0}$
with (time-homogeneous) transition matrix
$P(x,y), x,y\in V$, where $P(x,y)=0$ if $(x,y)\not\in E$.
We denote by $\DP_x$ the law of $(S_k)_{k\geq 0}$ where $S_0=x$.
The expectation under $\DP_x$ is denoted by $\DE_x$
and we set $p_n(x,y) = \DP_{x}(S_n=y)$.
We remark that often,
we consider the simple random walk (SRW) case
determined by  $P(x,y)=1/d_x$ when $(x,y)\in E$, with $d_x$ the degree of
$x\in V$. This of course is only defined when the degree
is locally finite, i.e. so that $d_x<\infty$ for all $x\in V$. Throughout, we write $S,S'$ for two independent copies of $S$.

The third component in the definition of the polymer is the
\emph{environment}, which
is a set of i.i.d.\ random variables $\omega(i,x)$
with $i\in \mathbb N$ and $x\in V$. For concreteness, we
chose the nomalization that makes $\omega(i,x)$ of
of mean zero and variance one. The law of the environment is denoted
$\IP$, with expectation denoted by $\IE$. We also let $\mathcal G_n$ denote the sigma-algebra generated by $\{\omega(i,x),i\leq n,x\in V\}$.

Throughout the paper, we make the following blanket assumption on the
random walk and on the environment.
\begin{assumption} \label{ass:1st}
\begin{enumerate}
\item The Markov chain $((S_k)_k,V)$ is irreducible
 and $(G,S)$ is locally finite, i.e.\ $\bar d_x:=
 \sum_{y: (x,y)\in E} {\bf 1}_{p(x,y)>0}<\infty$ for all $x\in V$;
\item  $\Lambda(\b): = \log \IE[e^{\b \omega(i,x)}]$ is finite for all $\b>0$. 
\end{enumerate}
\end{assumption}


Continuing with definitions, the \emph{polymer measure} $\DP^{n,\b}_x$ of horizon $n$ and \emph{inverse temperature} $\b\geq 0$ is the probability measure on the paths $S=(S_k)_{k\geq 0}$ given by
\begin{equation} \label{eq:defPolymerMeasure}
\dd \DP^{n,\b}_x(S) = \frac{e^{\b \sum_{i=1}^n \omega(i,S_i)}}{Z_n(x)} \dd \DP_x(S),
\end{equation}
where the \emph{partition function}  $Z_n(x)$ satisfies
\begin{equation}
 Z_n(x) = Z_n(x,\b,\omega) = \DE_x \left[ e^{\b \sum_{i=1}^n \omega(i,S_i)}\right].
\end{equation}
Under the polymer measure $\DP^{n,\b}_x$, the polymer path $(S_k)$ favors parts of the environment that take high values, and the parameter $\b$ tunes the intensity of this preference. One thus expects a transition between the
\textit{delocalized} (small $\b$) regime, where the
polymer does not exhibit a qualitative change of behavior
compared to the original walk, and the
\textit{localized} (large $\b$) regime, where
the polymer localizes in attractive parts of the environment.

\subsection{Weak and strong disorder and
their consequences} \label{subsec:weakStrongIntro}
An important quantity
in the study of the localized/delocalized   transition
is the \emph{normalized partition function}:
\begin{equation}
W_n(x) = W_n(x,\b,\omega) = Z_n(x)/\IE[Z_n(x)] = Z_n(x) e^{-n\Lambda(\b)}.
\end{equation}
It is straightforward to check that for fixed $x$,
$W_n(x)$ defines a mean-one, positive martingale
with respect to $(\mathcal G_n)_n$, which therefore
converges $\IP$-a.s.\ to a limit $W_\infty(x,\b)$.
The following easy 0-1 law
holds in our general context.
\begin{proposition}\label{prop:StrongWeakDisorderIntro} For all $\b\geq 0$,
\begin{equation*}
\begin{aligned}
 \text{either } & \ \forall x\in V,\   W_\infty(x,\b) > 0 \text{ a.s,} \quad \text{(weak disorder)}\\
 \text{or } & \ \forall x\in V,\   W_\infty(x,\b) = 0 \text{ a.s.} \quad \text{(strong disorder)}
\end{aligned}
\end{equation*}
Moreover, there is a critical parameter $\b_c\in[0,\infty]$ such that weak disorder  holds if $\b<\b_c$ and strong disorder
 holds if $\b>\b_c$.
 \end{proposition}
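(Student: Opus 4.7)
The proof decomposes into three logically separate tasks: a 0-1 law at fixed $(x,\b)$; the independence of $\IP(W_\infty(x,\b)=0)$ from $x$; and monotonicity of the weak-disorder region in $\b$ to produce $\b_c$.

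For the first two, I plan to work through the standard Markov-type decomposition of the martingale limit. Fix $m\geq 1$; splitting $\sum_{i=1}^{n}\omega(i,S_i)$ according to $i\leq m$ and $i>m$ and using the Markov property of $S$ yields the pathwise identity
\begin{equation*}
W_\infty(x) \;=\; \DE_x\!\left[\exp\!\Bigl(\b\sum_{i=1}^m\omega(i,S_i) - m\Lambda(\b)\Bigr)\,W_\infty^{(m)}(S_m)\right],
\end{equation*}
where $W_\infty^{(m)}(y)$ is the almost sure limit built from the shifted environment $\{\omega(m+i,z):i\geq 1,\,z\in V\}$, so that $W_\infty^{(m)}(y)\eqlaw W_\infty(y)$ and is independent of $\mathcal G_m$. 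Because the exponential factor is a.s.\ positive and, by local finiteness (Assumption \ref{ass:1st}), only finitely many $y$ satisfy $p_m(x,y)>0$, the zero set of $W_\infty(x)$ equals $\bigcap_{y:\,p_m(x,y)>0}\{W_\infty^{(m)}(y)=0\}$. Since this event lies in $\sigma(\omega(i,z):i>m)$ for every $m$, it belongs to the tail $\sigma$-field of the i.i.d.\ family $\{\omega(i,z)\}$, and Kolmogorov's 0-1 law yields $\IP(W_\infty(x,\b)=0)\in\{0,1\}$. For the $x$-independence I use irreducibility of $S$: for any $x,y\in V$ there is $m$ with $p_m(x,y)>0$, and the inclusion above forces $\IP(W_\infty(x)=0)\leq\IP(W_\infty^{(m)}(y)=0)=\IP(W_\infty(y)=0)$; swapping $x$ and $y$ (again by irreducibility) upgrades this to equality.

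For the third task, set $\b_c:=\sup\{\b\geq 0:W_\infty(x,\b)>0\ \text{a.s.}\}\in[0,\infty]$, which is non-empty because $W_n(x,0)\equiv 1$. To identify $\b_c$ as a sharp phase transition, I have to show that weak disorder is downward-closed in $\b$: whenever $W_\infty(x,\b_0)>0$ a.s.\ and $0\leq\b\leq\b_0$, also $W_\infty(x,\b)>0$ a.s. My plan is to adapt the fractional-moment comparison that is standard on $\Z^d$. Two observations drive the argument: (i) $\IE[W_n(x,\b)^\theta]\to\IE[W_\infty(x,\b)^\theta]$ for every $\theta\in(0,1)$ by dominated convergence (using $W_n^\theta\leq 1+W_n$), and by the 0-1 law above this limit is strictly positive iff weak disorder holds at $\b$; and (ii) for $\b=\lambda\b_0$ with $\lambda\in(0,1)$, applying H\"older's inequality under $\DE_x$ to the identity $e^{\b H_n}=(e^{\b_0 H_n})^{\lambda}$ together with convexity of $\Lambda$ gives a pointwise comparison of $W_n(x,\b)$ with a fractional power of $W_n(x,\b_0)$. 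Combining (i) and (ii), non-vanishing of $\IE[W_\infty(x,\b_0)^{\theta_0}]$ is transferred to non-vanishing of $\IE[W_\infty(x,\b)^\theta]$ for a suitable exponent.

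The substantive step is this last monotonicity. The naive H\"older bound only yields $W_n(x,\b)\leq W_n(x,\b_0)^\lambda\,e^{c n}$, and the parasitic factor $e^{cn}$, coming from the convexity defect $\lambda\Lambda(\b_0)-\Lambda(\b)\geq 0$, is what makes any direct comparison of $W_n(\b)$ and $W_n(\b_0)$ ineffective. The main technical work is to absorb this factor through a careful tuning of the fractional exponents $\theta,\theta_0$ and $\lambda$, or alternatively to replace it by a size-biased/Birkner-type representation of $W_\infty(\b_0)$ under which $W_\infty(\b)$ remains strictly positive. By contrast, the 0-1 dichotomy and the $x$-independence are essentially formal consequences of the martingale structure, local finiteness, Kolmogorov's 0-1 law and irreducibility; I do not expect any non-trivial obstruction coming from the generality of the graph $G$.
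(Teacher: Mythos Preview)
Your treatment of the first two tasks (the 0-1 law and the $x$-independence) is correct and essentially identical to the paper's argument: Markov decomposition, local finiteness to reduce to finitely many summands, Kolmogorov's 0-1 law, and irreducibility.

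The third task, monotonicity of the weak-disorder region, contains a genuine gap. Your H\"older/Jensen comparison $W_n(x,\b)\leq W_n(x,\b_0)^\lambda e^{cn}$ is an \emph{upper} bound on $W_n(\b)$, whereas to propagate weak disorder from $\b_0$ down to $\b<\b_0$ you need a \emph{lower} bound on $W_n(\b)$ (or, equivalently, to show that positivity of $\IE[W_\infty(\b_0)^{\theta_0}]$ implies positivity of $\IE[W_\infty(\b)^{\theta}]$, which requires a lower bound on the latter). Taking fractional moments of your inequality yields $\IE[W_n(\b)^\theta]\leq e^{c\theta n}\,\IE[W_n(\b_0)^{\lambda\theta}]$, again in the wrong direction; no tuning of $\theta,\theta_0,\lambda$ can reverse this, because the convexity defect $\lambda\Lambda(\b_0)-\Lambda(\lambda\b_0)$ is genuinely positive and grows linearly in $n$. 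The alternative ``size-biased/Birkner-type'' route you allude to would require substantial additional argument and is not what the paper does.

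The paper's monotonicity argument is different and short: for fixed $\theta\in(0,1)$ one differentiates
\[
\frac{\dd}{\dd\b}\,\IE\bigl[W_n(x,\b)^\theta\bigr]
=\theta\,\DE_x\,\IE^{S,n}\!\bigl[W_n(x,\b)^{\theta-1}\bigl(H_n(S)-n\Lambda'(\b)\bigr)\bigr],
\]
where $\dd\IP^{S,n}=e_n(S)\,\dd\IP$ is the tilted (still product) law on the environment. Under $\IP^{S,n}$ the variables $\omega(i,z)$ remain independent, $H_n(S)$ is coordinatewise non-decreasing in $\omega$ while $W_n^{\theta-1}$ is non-increasing (since $\theta<1$), so the FKG inequality decouples the two factors; and $\IE^{S,n}[H_n(S)-n\Lambda'(\b)]=\frac{\dd}{\dd\b}\IE[e_n(S)]=0$. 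Hence $\b\mapsto\IE[W_n(x,\b)^\theta]$ is non-increasing for every $n$, and therefore so is $\b\mapsto\IE[W_\infty(x,\b)^\theta]$, which combined with the 0-1 law gives the critical $\b_c$. This FKG step is the missing idea in your proposal.
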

For the lattice/SRW  model, it is known that $\b_c = 0$ in dimensions $d=1,2$
and that $\b_c\in(0,\infty)$ when $d\geq 3$, see \cite{Lacoin10,CStFlour}.

One expects that the weak/strong disorder transition corresponds to the
localized/delocalized one.
Indeed, for
the lattice/SRW model,  \cite{CY06} show
that in the whole weak disorder region,
the polymer path satisfies a functional central limit theorem.
Their argument adapts to our general context with some restrictions,
as follows.
Let $|S_n|=d(S_n,S_0)$ and let $|S^{(N)}| = (|S_{Nt}| / \sqrt N)_{t\in [0,1]}$
denote the continuous time process obtained by interpolation.
\begin{theorem}[\cite{CY06}]
  \label{theo-FCLT}
  Let $S_0=x\in V$.
Assume that: \\
(i) $(|S_k|)$ satisfies an almost-sure central limit theorem in the sense that for all sequence $N_k$ such that $\inf_{k} N_{k+1}/N_k > 0$, for any bounded and Lipschitz function $F$ of the path, as $N\to\infty$,
\[\frac{1}{N} \sum_{k=1}^N F\left(\left|S^{(N_k)}\right|\right)  \to \DE\left[F\left(\left|B\right|\right)\right], \quad \DP_x \text{-a.s,}
\]
where $(B_t)$ is a centered, real-valued Brownian motion with $\DE[|B_1|^2] > 0$.\\
(ii) $(W_n(x,\b))_n$ is uniformly integrable.

Then, as $n\to\infty$,
\begin{equation}
\DE_{x}^{n,\b}\left[F\left(|S^{n}|\right)\right] \cvIP \DE\left[F(\left|B\right|)\right].
\end{equation}
\end{theorem}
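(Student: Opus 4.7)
I follow the strategy of \cite{CY06}, adapted to our general graph setting. The proof has three steps: construction of an infinite-volume polymer measure on cylindrical events, transfer of this representation to the rescaled functional $F(|S^{(n)}|)$, and extraction of the limit via Assumption (i). The key distinction from the $\mathbb Z^d$/SRW setting is that Assumption (i) is an almost-sure Ces\`aro-type functional CLT for $|S_k|$, which must be used in place of the quenched local CLT available on the lattice.

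\textbf{Step 1 (infinite-volume polymer measure).} By Assumption (ii), the martingale $W_n(x)$ is uniformly integrable, so $W_n(x)\to W_\infty(x)$ in $L^1$, and by Proposition \ref{prop:StrongWeakDisorderIntro} we have $W_\infty(x)>0$ almost surely. For any bounded $f$ depending only on $(S_1,\ldots,S_k)$, the Markov property at time $k$ together with independence of the pre- and post-$k$ environments gives
\begin{equation*}
\DE_x^{n,\b}[f(S_1,\ldots,S_k)] \;=\; \frac{\DE_x\bigl[\,f(S_1,\ldots,S_k)\,e^{\b \sum_{i=1}^k \omega(i,S_i)-k\Lambda(\b)}\,W^{(k)}_{n-k}(S_k)\bigr]}{W_n(x)},
\end{equation*}
where $W^{(k)}_{n-k}(y)$ is the normalized partition function built from the time-shifted environment $(\omega(k+j,\cdot))_{j\geq 1}$ and started at $y$. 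Taking $n\to\infty$, one replaces $W^{(k)}_{n-k}(S_k)$ by its $L^1$-limit $W^{(k)}_\infty(S_k)$ and $W_n(x)$ by $W_\infty(x)>0$, yielding convergence in probability on cylindrical events to a consistent quenched polymer measure $\mu_\infty^\omega$.

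\textbf{Step 2 (CLT for the rescaled path).} Pick an intermediate scale $k=k_n$ with $k_n\to\infty$ and $k_n=o(n)$. Decomposing as in Step 1 and using the Markov property at time $k_n$,
\begin{equation*}
\DE_x^{n,\b}[F(|S^{(n)}|)] \;=\; \frac{\DE_x\!\bigl[\,e^{\b H_{k_n}-k_n\Lambda(\b)}\,W^{(k_n)}_{n-k_n}(S_{k_n})\,\DE_{S_{k_n}}^{n-k_n,\b,\tilde\omega}\!\bigl[F\bigl(|\widehat S^{(n)}|\bigr)\bigr]\bigr]}{W_n(x)},
\end{equation*}
where $\widehat S$ stands for the concatenated path (so that $|\widehat S^{(n)}|$ differs from a pure post-$k_n$ rescaled path only on a time interval of length $k_n/n=o(1)$, which the Lipschitz continuity of $F$ absorbs). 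Conditioning on the path up to time $k_n$, the inner expectation is a polymer average over an independent environment with the same law, so an induction/fixed-point argument reduces matters to showing that $\DE_y[F(|S^{(n)}|)]\to\DE[F(|B|)]$ in a sufficiently strong sense for the endpoint $y=S_{k_n}$. This is precisely the content of Assumption (i): the Ces\`aro almost-sure CLT yields the convergence along suitable subsequences uniformly enough to survive the re-weighting by the $\mathcal G_{k_n}$-measurable, mean-one Radon--Nikodym factor $e^{\b H_{k_n}-k_n\Lambda(\b)} W^{(k_n)}_\infty(S_{k_n})/W_\infty(x)$.

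\textbf{Main obstacle.} The delicate point is the transfer in Step 2: the Ces\`aro a.s.\ CLT in Assumption (i) controls path functionals under $\DP_x$, but the polymer reweights paths by a Radon--Nikodym factor that, although only $\mathcal G_{k_n}$-measurable with $k_n=o(n)$, is itself random and of variable size. Making the transfer quantitative requires combining uniform integrability of the shifted partition functions with the Ces\`aro structure of (i) (which is why (i) is stated along all subsequences with $\inf_k N_{k+1}/N_k>0$ rather than along a single subsequence). Together with a standard modulus-of-continuity estimate for tightness on $C([0,1])$ that propagates from $\DP_x$ to $\DP_x^{n,\b}$ via the same Radon--Nikodym bound, this gives the convergence in probability stated in the theorem.
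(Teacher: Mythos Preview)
The paper does not give its own proof of this statement; it cites \cite{CY06} and remarks that their argument adapts. So the comparison is against the Comets--Yoshida proof.

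Your Step 1 is correct and matches \cite{CY06}. Step 2, however, contains a real gap. After decomposing at the intermediate time $k_n$, you assert that ``an induction/fixed-point argument reduces matters to showing that $\DE_y[F(|S^{(n)}|)]\to\DE[F(|B|)]$'' for $y=S_{k_n}$. There is no induction here: the inner expectation $\DE_{S_{k_n}}^{n-k_n,\beta,\tilde\omega}[F(|\widehat S^{(n)}|)]$ is again a polymer expectation of exactly the type you are trying to control, so the ``reduction'' is circular. Moreover, the target you reduce to is the \emph{ordinary} CLT from a random, $n$-dependent starting point, whereas Assumption (i) only provides an almost-sure \emph{Ces\`aro} CLT from the fixed point $x$; you never explain how the Ces\`aro averaging is actually used. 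Your ``Main obstacle'' paragraph names the difficulty but does not resolve it.

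The mechanism in \cite{CY06} that you are missing has two ingredients. First, since $\mu_\infty\ll\DP_x$ (which your Step 1 establishes), the $\DP_x$-almost-sure Ces\`aro CLT of Assumption (i) transfers verbatim to $\mu_\infty$; equivalently, for each fixed $m$ and a.e.\ $\omega$, the bounded weight $e_m$ can be carried through the Ces\`aro limit by dominated convergence, giving $\DE_x[e_m\,\bar F(|S^{(n_k)}|)]\to 0$ in Ces\`aro mean along every admissible subsequence. Second, one invokes the elementary deterministic fact that if a bounded real sequence $(a_n)$ has $N^{-1}\sum_{k\le N}a_{n_k}\to 0$ for \emph{every} subsequence $(n_k)$ satisfying the growth condition, then necessarily $a_n\to 0$ (otherwise extract a one-sided subsequence bounded away from zero and reach a contradiction). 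This is precisely why Assumption (i) is phrased for \emph{all} sequences $N_k$ rather than a single one---a point your proposal mentions in passing but never exploits. The remaining passage from the fixed-$m$ quantity $\DE_x[e_m\bar F_n]=\IE[\DE_x[e_n\bar F_n]\,|\,\mathcal G_m]$ back to $\DE_x[e_n\bar F_n]$ is a separate $L^1$ estimate driven by the uniform integrability of $(W_n)$. The intermediate-time decomposition you write down is not the device that closes this gap.
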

\begin{remark}
  Condition (ii) of Theorem \ref{theo-FCLT}
  implies that weak disorder holds. The converse may not hold,
  see the discussion in Section \ref{sec:UIintro}.
\end{remark}
\begin{remark}
  Under weak disorder, one can consider the limit polymer measure, defined as
$\mu_{polymer} = \lim_{n\to\infty} \DP_x^{n,\b}$.
Proposition 4.1 in \cite{CY06} states that $\mu_{polymer}$ is well defined and
is absolutely continuous with respect to the original measure $\DP_x$; the
proof carries over to
our general framework.
\end{remark}

Let $I_n(x) = (\DP^{n-1,\b}_x)^{\otimes 2}(S_n={S}'_n)$ denote
the probability for two independent polymer paths (in the same
environment) to end at the same point.
As noted in \cite[Remark 2.5]{CSY03}, the next theorem holds under
the mere assumption that
$(S_k)$ is a Markov chain (in particular, irreducibility of $(S,P)$ is not necessary).
\begin{theorem}[\cite{CSY03,CaHu02}] \label{th:CSY}
  For all $\b > 0$, $x\in V$,
\begin{equation}
 \{W_\infty(x) = 0\} = \left\{\sum_{n\geq 0} I_n(x) = \infty\right\}, \quad \IP\text{-a.s.}
\end{equation}
\end{theorem}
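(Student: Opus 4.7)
The proof rests on a single conditional second-moment computation, combined with a Kakutani / bounded-difference martingale argument applied to the ratios $\bar M_n := W_n(x)/W_{n-1}(x)$.

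First, I would establish the identity
\[
\IE\!\left[\bar M_n^2 \,\big|\, \mathcal{G}_{n-1}\right] = 1 + c(\b)\, I_n(x), \qquad c(\b) := e^{\Lambda(2\b) - 2\Lambda(\b)} - 1 > 0.
\]
Decomposing $W_n$ over the value of $S_n$, one has $\bar M_n = \sum_y \nu_n(y)\, e^{\b\omega(n,y) - \Lambda(\b)}$ where $\nu_n(y) := \DP^{n-1,\b}_x(S_n = y)$ is $\mathcal{G}_{n-1}$-measurable; expanding the square, using independence of $\omega(n,\cdot)$ from $\mathcal{G}_{n-1}$, and noting that $\sum_y \nu_n(y)^2 = I_n(x)$ (which is the very definition of $I_n$) gives the identity.

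For the inclusion $\{\sum_n I_n < \infty\} \subseteq \{W_\infty(x) > 0\}$, localize at $\tau_N := \inf\{n : \sum_{k \leq n} I_k > N\}$, noting $\{\sum_n I_n < \infty\} = \bigcup_N \{\tau_N = \infty\}$. On $\{\tau_N = \infty\}$, the martingale $\sum_{k\leq n}(\bar M_k - 1)$ has compensator bounded by $c(\b)(N+1)$ and hence converges a.s., and $\sum_k(\bar M_k - 1)^2 < \infty$ a.s.\ by orthogonality. In particular $\bar M_k \to 1$, so $|\bar M_k - 1| \leq 1/2$ eventually, and the Taylor expansion $\log \bar M_k = (\bar M_k - 1) - (\bar M_k - 1)^2/2 + O((\bar M_k - 1)^3)$ shows $\sum_k \log \bar M_k$ converges, i.e.\ $\log W_n$ has a finite limit and $W_\infty(x) > 0$.

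For the reverse inclusion, on $\{W_\infty(x) > 0\}$ one has $\bar M_n \to 1$ so eventually $|\bar M_n - 1| \leq 1/2$. Covering $\{W_\infty(x) > 0\}$ (up to null sets) by the events $\{|\bar M_n - 1| \leq 1/2 \text{ for all } n \geq m\}$ and working on each, the shifted martingale $\sum_{k \geq m}(\bar M_k - 1)$ has bounded increments. Using the elementary inequality $\log(1+r) \leq r - r^2/4$ valid for $|r| \leq 1/2$, convergence of $\sum_k \log \bar M_k$ (which holds on $\{W_\infty > 0\}$) combined with the Chow-type characterization (for bounded-difference martingales, a.s.\ convergence of $\sum_k R_k$ is equivalent to $\sum_k R_k^2 < \infty$, equivalently to $\sum_k \IE[R_k^2 | \mathcal{G}_{k-1}] < \infty$) forces $\sum_k (\bar M_k - 1)^2 < \infty$ a.s., hence $\sum_n I_n < \infty$ a.s.\ on $\{W_\infty > 0\}$.

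The main obstacle is this reverse direction, specifically transferring pathwise convergence of $\log W_n$ into a bound on the predictable compensator $\sum_n I_n$. The bridge is the bounded-difference martingale convergence theorem, and the localization onto events where $\bar M_n$ stays close to one is essential because $\bar M_n$ is not uniformly bounded: the environment $\omega$ only has exponential, not uniform, moments.
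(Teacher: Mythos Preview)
The paper does not prove this theorem but defers to \cite{CSY03,CaHu02}. Your conditional-variance identity and the forward inclusion $\{\sum_n I_n<\infty\}\subseteq\{W_\infty>0\}$ are correct and match the standard argument.

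The reverse inclusion has a genuine gap. The events $\{|\bar M_k-1|\le 1/2\text{ for all }k\ge m\}$ are not in any $\mathcal G_j$, so restricting to them does not produce a martingale, and the bounded-increment equivalence you invoke (``Chow-type characterization'') is simply not available there. Passing to the stopping time $\tau_m=\inf\{k\ge m:|\bar M_k-1|>1/2\}$ does not help either: the stopped martingale has one potentially unbounded jump at time $\tau_m$. A partial repair uses the one-sided bound $\bar M_k-1>-1$: stopping $\sum_{k\le n}(\bar M_k-1)$ when it first falls below $-K$ yields a martingale bounded below by $-K-1$, hence convergent, and since $\sum_{k\le n}(\bar M_k-1)\ge\log W_n$ this gives convergence of $\sum_k(\bar M_k-1)$ and then $\sum_k(\bar M_k-1)^2<\infty$ on $\{W_\infty>0\}$. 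But the final step---from $\sum_k(\bar M_k-1)^2<\infty$ pathwise to $\sum_k I_k<\infty$---is not automatic (the predictable and optional quadratic variations need not agree without a two-sided increment bound), and it is here that the proof in \cite{CSY03,CaHu02} inputs additional structure. They establish the two-sided estimate $c_1 I_n\le -\IE[\log\bar M_n\mid\mathcal G_{n-1}]\le c_2 I_n$ together with $\mathrm{Var}(\log\bar M_n\mid\mathcal G_{n-1})\le c_3 I_n$, run the Doob decomposition $\log W_n=M_n-A_n$, and on $\{\sum_n I_n=\infty\}$ combine $A_n\to\infty$ with the martingale strong law $M_n/\langle M\rangle_n\to 0$ to force $\log W_n\to-\infty$. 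The lower bound $-\IE[\log\bar M_n\mid\mathcal G_{n-1}]\ge c_1 I_n$ is precisely the missing ingredient in your sketch; it requires exploiting that $\bar M_n$ is a convex combination of i.i.d.\ copies of $e^{\b\omega-\Lambda(\b)}$, not merely the second-moment identity.
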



In the other extreme,
strong
localization properties in the entire strong disorder region have been shown
for the lattice/SRW model \cite{CaHu02,CSY03}.

We close this subsection by noting that while $W_n(x)$ may be very different
from its expectation, this is not the case for $\log W_n(x)$.
The next theorem
was proved in the lattice case  in \cite{LiuWat}. An inspection of
the proof reveals that it
transfers directly to our setup.
\begin{theorem}[\cite{LiuWat}] \label{th:ConcentrInequality}
For all $\b>0$, there exists a finite constant $C=C(\b)$ (that is independent of the graph structure) such that for all $x\in G$,
\begin{equation} \label{eq:ConcentrInequality}
\IP\left(\left|\frac{\log W_n(x)}{n} -\frac {\IE[\log W_n(x)]}{n}\right| \geq \varepsilon\right) \leq \begin{cases}
2 e^{- n C \varepsilon^2} & \text{if } 0\leq \varepsilon \leq 1,\\
2 e^{- n C {\varepsilon}} & \text{if } \varepsilon \geq  1.
\end{cases}
\end{equation}
\end{theorem}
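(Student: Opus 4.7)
The plan is to apply a Bernstein-type martingale concentration inequality to the Doob martingale
\[
M_k := \IE[\log W_n(x)\mid\mathcal G_k], \qquad 0\leq k\leq n,
\]
whose terminal increment is $M_n-M_0=\log W_n(x)-\IE[\log W_n(x)]$. The Gaussian/linear-exponential split in \eqref{eq:ConcentrInequality} is exactly what Bernstein's inequality produces, so the task reduces to proving a \emph{graph-independent} control on the conditional moment generating function of the martingale differences $\Delta_k:=M_k-M_{k-1}$, namely
\[
\IE[e^{\theta\Delta_k}\mid\mathcal G_{k-1}]\leq \exp\!\bigl(c_1(\b)\theta^2\bigr), \qquad |\theta|\leq \theta_0(\b).
\]
Combined with the Chernoff bound and an optimization of $\theta\in[0,\theta_0]$ against $t=n\varepsilon$, this yields a Gaussian tail for $\varepsilon\leq 1$ and a linear-exponential tail (corresponding to the saturated optimum $\theta=\theta_0$) for $\varepsilon\geq 1$, matching the two cases of \eqref{eq:ConcentrInequality}.

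To bring $\Delta_k$ into a tractable form I would use a replica/resampling device. Let $\omega^*(k,\cdot)$ be an independent copy of $\omega(k,\cdot)$ and let $W_n^{(k,*)}$ be the partition function obtained from $W_n$ by swapping $\omega(k,\cdot)$ for $\omega^*(k,\cdot)$. Since $\omega^*(k,\cdot)\eqlaw\omega(k,\cdot)$ and is independent of $\mathcal G_n$, one has $\IE[\log W_n^{(k,*)}\mid\mathcal G_k]=M_{k-1}$, so
\[
\Delta_k=\IE\!\left[\log\frac{W_n(x)}{W_n^{(k,*)}(x)}\,\bigm|\,\mathcal G_k\right].
\]
Splitting the Boltzmann weight at time $k$ by the Markov property, the inner ratio rewrites as
\[
\frac{W_n(x)}{W_n^{(k,*)}(x)}=\frac{\int e^{\b\omega(k,y)}\,\nu_k(\dd y)}{\int e^{\b\omega^*(k,y)}\,\nu_k(\dd y)},
\]
where $\nu_k$ is the law of $S_k$ under the polymer measure built from the environment rows $\neq k$; crucially, $\nu_k$ is independent of both $\omega(k,\cdot)$ and $\omega^*(k,\cdot)$.

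The main obstacle is to convert this log-ratio into a graph-independent MGF estimate. The naive pointwise bound $|\log W_n/W_n^{(k,*)}|\leq\b\sup_{y\in\mathrm{supp}\,\nu_k}|\omega(k,y)-\omega^*(k,y)|$ is useless, since $\mathrm{supp}\,\nu_k$ can be arbitrarily large on a general locally finite graph. Following \cite{LiuWat}, I would instead bound the log-ratio by Jensen's inequality against the tilted probability measure $\widetilde\nu_k^{\b\omega}(\dd y)\propto e^{\b\omega(k,y)}\nu_k(\dd y)$, obtaining
\[
\log\frac{W_n(x)}{W_n^{(k,*)}(x)} \leq \b\!\int(\omega(k,y)-\omega^*(k,y))\,\widetilde\nu_k^{\b\omega}(\dd y),
\]
together with a matching reverse bound. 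The $\omega^*$-integral has conditional mean zero, while the remaining $\omega(k,\cdot)$-integral equals, up to the factor $\b$, the derivative at $\b$ of the convex log-Laplace transform $s\mapsto\log\int e^{s\omega(k,y)}\nu_k(\dd y)$. The delicate point is that the weights $\widetilde\nu_k^{\b\omega}$ depend on $\omega(k,\cdot)$ itself; the resolution exploits convexity of this log-Laplace transform together with the finiteness of $\Lambda$ on $\R$ to sub-Gaussianly bound its derivative, uniformly in $\nu_k$ and hence in the graph $G$. Inserting the resulting MGF estimate into Bernstein's inequality for martingales completes the proof.
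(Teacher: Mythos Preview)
The paper does not supply its own proof of this theorem: it simply states that the result was proved for the lattice in \cite{LiuWat} and that ``an inspection of the proof reveals that it transfers directly to our setup.'' Your proposal is precisely a sketch of the Liu--Watbled argument, so in that sense you are aligned with the paper's (deferred) proof: Doob martingale $M_k=\IE[\log W_n\mid\mathcal G_k]$, resampling of row $k$ to express $\Delta_k$ as a conditional expectation of $\log(W_n/W_n^{(k,*)})$, reduction via conditional Jensen to an MGF bound for $Z(\nu)=\log\int e^{\b\omega_y}\nu(\dd y)$ that is uniform in the probability measure $\nu$, and finally Bernstein-type optimization producing the Gaussian/linear split.

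The only place your write-up is genuinely soft is the last step, where you say the uniform-in-$\nu$ sub-Gaussian bound ``exploits convexity of this log-Laplace transform together with the finiteness of $\Lambda$ on $\R$.'' That sentence hides the actual work in \cite{LiuWat}: one needs a quantitative statement of the form $\IE[e^{\theta(Z(\nu)-\IE Z(\nu))}]\leq e^{K(\b)\theta^2}$ for $|\theta|\leq\theta_0(\b)$, valid for \emph{every} probability measure $\nu$ on a countable set, with $K,\theta_0$ depending only on $\Lambda$. Convexity of $s\mapsto\log\int e^{s\omega_y}\nu(\dd y)$ by itself does not give this; one has to combine it with Jensen in both directions (against $\nu$ and against the tilted $\widetilde\nu^{\b\omega}$) and with the bounds $\Lambda(\b\pm\theta)-\Lambda(\b)<\infty$ to control the second derivative uniformly. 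Once that lemma is in hand, the rest of your outline is correct and indeed graph-independent, which is exactly why the lattice proof transfers verbatim.
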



\subsection{Very strong disorder} \label{subsec:verystrongIntro}
Under strong disorder, we have by Proposition \ref{prop:StrongWeakDisorderIntro} that $W_n(x,\beta)\to_{n\to\infty} 0$, a.s. We say that \emph{very strong disorder} holds
if that decay is exponential, that is, if
\begin{equation} \label{eq:veryStrongDisordermain}
\forall x\in V,\ \bar{p}(x):= \limsup_{n\to\infty} \frac{1}{n} \IE \log W_n(x,\b) < 0.
\end{equation}
\begin{proposition} \label{prop:veryStrongDis}
  The limit $\bar{p}:=\bar{p}(x)$ does not depend on $x\in V$. Moreover,
there exists a critical parameter $\bar{\b}_c\in[0,\infty]$, such that very strong disorder holds for $\b > \bar \b_c$ and $\,\bar{p}=0$  when $\b<\bar \b_c $. Finally,
\begin{equation} \label{eq:quenchedVerystrong}
\IP\text{-a.s.},\quad \limsup_{n\to\infty} \frac{1}{n} \log W_n(x,\b) = \limsup_{n\to\infty} \frac{1}{n} \IE \log W_n(x,\b).
\end{equation}
\end{proposition}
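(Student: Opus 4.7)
The plan is to address the three parts in turn. For the independence of $\bar p(x)$ from $x$, I would exploit irreducibility: given $x,y\in V$, pick $k\geq 1$ with $p_k(x,y)>0$ and apply the Markov property at time $k$, writing
\[
Z_{n+k}(x) \geq A_k(x,y)\cdot Z_n^{(k)}(y),
\]
where $A_k(x,y):=\DE_x[e^{\b\sum_{i=1}^k \om(i,S_i)}\mathbf{1}_{S_k=y}]$ is $\mathcal G_k$-measurable and $Z_n^{(k)}(y):=\DE_y[e^{\b\sum_{i=1}^n \om(k+i,S_i)}]$ is independent of $\mathcal G_k$ and distributed as $Z_n(y)$. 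Taking $\log$ and $\IE$, and using Jensen's inequality conditionally on $\{S_k=y\}$ together with $\IE\om(i,z)=0$ to bound $\IE\log A_k(x,y)\geq \log p_k(x,y)$, I would obtain
\[
\IE\log W_{n+k}(x)\geq \IE\log W_n(y)+\log p_k(x,y)-k\Lambda(\b);
\]
dividing by $n+k$, taking $\limsup$, and then swapping $x$ and $y$ yields $\bar p(x)=\bar p(y)$.

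For part (2), the existence of $\bar\b_c$, I would apply the Markov property now at time $n$ to obtain the decomposition $W_{n+m}(x)=W_n(x)\sum_y \DP^{n,\b}_x(S_n=y)\,W_m^{(n)}(y)$, with $W_m^{(n)}(y)$ independent of $\mathcal G_n$ and equidistributed with $W_m(y)$. Applying Jensen's inequality to $\log$ and using part (1) to replace $\IE\log W_m(y)$ by the $y$-independent quantity $g(m):=\IE\log W_m(x,\b)$, one gets super-additivity $g(n+m)\geq g(n)+g(m)$. Fekete's lemma then promotes $\bar p(\b)$ to a genuine limit, $\bar p(\b)=\lim_n g(n)/n=\sup_n g(n)/n\in[-\infty,0]$. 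Setting $\bar\b_c:=\sup\{\b\geq 0:\bar p(\b)=0\}$, the inequality $\bar p(\b)<0$ for $\b>\bar\b_c$ is automatic; the hard part will be the converse, $\bar p(\b)=0$ for $\b<\bar\b_c$, which reduces to showing that $\bar p$ is non-increasing in $\b$. In the Gaussian case this is a direct integration-by-parts computation yielding
\[
\frac{\dd}{\dd\b}\IE\log W_n(\b) = -\b\sum_{i=1}^n \IE\,(\DP^{n,\b}_x)^{\otimes 2}(S_i=S'_i)\leq 0;
\]
for general $\om$, I would attempt a Hölder-type interpolation comparison between $W_n(\b_1)$ and $W_n(\b_2)$, exploiting the convexity of $\Lambda$ and of $\b\mapsto\log Z_n(\b)$ as a log-moment-generating function. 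This monotonicity step is where I expect all the subtlety of the argument to concentrate.

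Finally, for the quenched identity \eqref{eq:quenchedVerystrong}, I would invoke the concentration bound of Theorem~\ref{th:ConcentrInequality}: for any $\varepsilon\in(0,1]$ the right-hand side of \eqref{eq:ConcentrInequality} is summable in $n$, so Borel--Cantelli gives
\[
\frac{\log W_n(x,\b)}{n}-\frac{\IE\log W_n(x,\b)}{n}\cvto{n}{\infty} 0\quad \IP\text{-a.s.},
\]
and passing to the $\limsup$ on both sides yields \eqref{eq:quenchedVerystrong}.
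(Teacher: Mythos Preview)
Your treatment of parts (1) and (3) is correct and matches the paper: the comparison inequality $\IE\log W_{n+k}(x)\geq \IE\log W_n(y)+O_k(1)$ via Markov at time $k$, followed by symmetry in $x,y$, is exactly the paper's route for (1); and concentration plus Borel--Cantelli is exactly the paper's route for (3).

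For part (2) there are two issues. First, your super-additivity step is not valid: part (1) gives $y$-independence of $\limsup_n n^{-1}\IE\log W_n(y)$, \emph{not} of $\IE\log W_m(y)$ for each fixed $m$. On a non-transitive graph the latter genuinely depends on $y$, so you cannot replace $\IE\log W_m(y)$ by $g(m)=\IE\log W_m(x)$ inside the Jensen bound, and Fekete does not follow. Fortunately this step is unnecessary: the proposition only asserts a $\limsup$, and the existence of $\bar\b_c$ follows solely from monotonicity of $\b\mapsto\bar p(\b)$ together with $\bar p(\b)\leq 0$.

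Second, and more importantly, the monotonicity itself is the whole content of the proof, and your ``H\"older/convexity'' sketch does not work as stated: $\log W_n(\b)=\log Z_n(\b)-n\Lambda(\b)$ is a difference of two convex functions of $\b$, and H\"older in the path variable only gives that $\b\mapsto\b^{-1}\log Z_n(\b)$ is non-decreasing, which does not control the difference. The paper's argument is an FKG correlation inequality: differentiating,
\[
\frac{\dd}{\dd\b}\IE[\log W_n(x,\b)]
=\DE_x\,\IE^{S,n}\!\left[W_n(x,\b)^{-1}\bigl(H_n(S)-n\Lambda'(\b)\bigr)\right],
\]
where $\dd\IP^{S,n}=e_n(S)\,\dd\IP$ is a product measure under which $H_n(S)$ is non-decreasing and $W_n^{-1}$ is non-increasing in $\omega$. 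FKG decouples the two factors, and since $\IE^{S,n}[H_n(S)-n\Lambda'(\b)]=\frac{\dd}{\dd\b}\IE[e_n(S)]=0$, the derivative is $\leq 0$. Your Gaussian computation is a special case of this (Gaussian integration by parts replaces FKG), but for general $\omega$ the FKG step is the missing ingredient.
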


From the definitions, we clearly have that $\b_c\leq \bar{\b}_c$.
For the lattice/SRW model, it is known that $\bar{\b}_c=0$ when $d=1,2$  (see \cite{Lacoin10,CV06}
), while $\bar \b_c \in (0,\infty)$ when $d\geq 3$. In this latter case, the question whether or not $\bar \b_c = \b_c$ is, to our knowledge, still open. Still on the lattice, there exists a random walk with heavy-tailed jumps such that
 $\b_c < \bar \b_c$, see
\cite{Viveros20}.

In our general framework, we have the following:
\begin{proposition}[Very strong disorder always holds for large $\b$] \label{prop:veryStrongDisAlwaysHolds} Assume that there exists $d<\infty$ so that
  $d_x\leq d$ for all $d$.
  Assume further
  that the support of the law of $\omega(i,x)$ is unbounded from above. Then, there exists $\b_0\geq 0$ such that very strong disorder holds for all $\b>\b_0$.
\end{proposition}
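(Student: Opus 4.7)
The idea is to bound $\log Z_n$ from above by the max of $\beta \sum_{i=1}^{n}\omega(i,\pi_{i})$ over the finitely many length-$n$ paths $\pi$ starting at $x$, and then to use the bounded-degree hypothesis to control this max via a union bound with a tunable parameter $\lambda$. The large deviation rate $\Lambda(\beta)$ eventually dominates because unbounded support forces $\Lambda(\beta)/\beta\to\infty$.

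\textbf{Step 1: A pathwise upper bound on $Z_n$.} Writing $Z_n(x)=\sum_{\pi}\mathrm{P}_x(\pi)\,e^{\beta\sum_{i=1}^{n}\omega(i,\pi_i)}$ as a convex combination over nearest-neighbor paths $\pi=(x=\pi_0,\pi_1,\dots,\pi_n)$ with $\mathrm{P}_x(\pi)>0$, I would simply bound it by the maximum:
\begin{equation*}
\log Z_n(x)\;\leq\;\beta\,\max_{\pi}\sum_{i=1}^{n}\omega(i,\pi_i).
\end{equation*}
The local finiteness (with uniform bound $d$) yields at most $d^{n}$ such paths.

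\textbf{Step 2: Entropy--energy bound on the max.} For any $\lambda>0$, a union bound together with the independence of the $\omega(i,\pi_i)$ along a single path (the time index $i$ distinguishes them) gives
\begin{equation*}
\mathbb{E}\Bigl[\exp\bigl(\lambda\max_{\pi}\textstyle\sum_{i=1}^{n}\omega(i,\pi_i)\bigr)\Bigr]\;\leq\;\sum_{\pi}\prod_{i=1}^{n}\mathbb{E}[e^{\lambda\omega(i,\pi_i)}]\;\leq\;d^{\,n}\,e^{n\Lambda(\lambda)}.
\end{equation*}
Jensen then yields $\mathbb{E}[\max_{\pi}\sum_{i}\omega(i,\pi_i)]\leq n(\log d+\Lambda(\lambda))/\lambda$. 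Combining with Step 1,
\begin{equation*}
\frac{1}{n}\,\mathbb{E}\log W_n(x,\beta)\;=\;\frac{1}{n}\,\mathbb{E}\log Z_n(x)-\Lambda(\beta)\;\leq\;\frac{\beta\bigl(\log d+\Lambda(\lambda)\bigr)}{\lambda}-\Lambda(\beta).
\end{equation*}

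\textbf{Step 3: Conclude via the unbounded support.} Because the support of $\omega(i,x)$ is unbounded from above, for each $M>0$ there is $p_M>0$ with $\mathbb{P}(\omega>M)\geq p_M$, so $\Lambda(\beta)\geq \log p_M+\beta M$ and hence $\Lambda(\beta)/\beta\to\infty$ as $\beta\to\infty$. Fixing any $\lambda>0$ (say $\lambda=1$) in the display above, the right-hand side becomes strictly negative for all $\beta$ large enough, say $\beta>\beta_0$. This uniform-in-$n$ upper bound implies $\bar{p}(x)\leq -c(\beta)<0$, i.e.\ very strong disorder.

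\textbf{Main obstacle.} There is no serious obstacle here; the argument is essentially a max-of-i.i.d.\ estimate enabled by bounded degree. The only real point of care is making sure the entropy factor $\log d$ (arising from the combinatorial count of paths) is beaten by the rate $\Lambda(\beta)$, which is precisely what the superlinear growth $\Lambda(\beta)/\beta\to\infty$ provides. Without bounded degree this bound fails, which is consistent with the stated hypothesis.
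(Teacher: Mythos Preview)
Your proof is correct and rests on the same core ingredients as the paper's: bound $Z_n$ by the maximum over the at most $d^n$ paths, then exploit the superlinear growth $\Lambda(\beta)/\beta\to\infty$ (from unbounded support) to beat the entropy $\log d$.

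The execution differs slightly. The paper first obtains an \emph{almost sure} bound: it applies a Chernoff/union bound to get $\IP(\sup_{\mathbf x}H_n(\mathbf x)>na)\leq d^n e^{-nI(a)}$, chooses $a$ large so that $I(a)>\log d$ (using $I(a)\to\infty$, which is the Legendre-dual statement of your $\Lambda(\beta)/\beta\to\infty$), and then invokes Borel--Cantelli to conclude $\limsup n^{-1}\log W_n<0$ a.s.; the passage to the \emph{expected} free energy is then made via the concentration inequality \eqref{eq:quenchedVerystrong}. Your route is more direct: you bound $\IE\log W_n$ in one shot via $\IE[e^{\lambda\max}] \leq d^n e^{n\Lambda(\lambda)}$ and Jensen, avoiding both Borel--Cantelli and the concentration step. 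This is a genuine simplification, at the (irrelevant) cost of a slightly cruder constant since you do not optimize over $\lambda$.
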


Very strong disorder implies the following strong localization property.
The proof, given originally for the lattice/SRW model, carries over
without change to our setup (see \cite[Remark 2.5]{CSY03}).
\begin{theorem}[\cite{CSY03}]\label{th:CSY2} 
 Very strong disorder holds if and only if there is some $c>0$ such that
\begin{equation}
\forall x\in V,\quad \liminf_{n\to\infty} \frac{1}{n} \sum_{k=1}^{n} I_k(x) \geq c,\quad \IP\text{-a.s.}
\end{equation}
or, equivalently, if there is some $c>0$ such that
\begin{equation} \label{eq:mass_concentration}
\forall x\in V,\quad \liminf_{n\to\infty} \frac 1n \sum_{k=1}^n \sup_{y\in V} \DP^{k,\b}_x(S_k=y) \geq c,\quad \IP\text{-a.s.}
\end{equation}
In particular, under strong disorder, there exists $c>0$ such that
\[\forall x\in V,\quad \limsup_{n\to\infty} \sup_{y\in V} \DP^{n,\b}_x(S_n=y) \geq c,\quad \IP\text{-a.s.}
\]
\end{theorem}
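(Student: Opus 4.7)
The plan is to adapt the proofs of \cite{CSY03,CaHu02} to our general setting; the only graph-dependent inputs are Theorem \ref{th:ConcentrInequality} and the almost-sure identity \eqref{eq:quenchedVerystrong} of Proposition \ref{prop:veryStrongDis}, both already established. I first telescope
\[
 \log W_n(x) = \sum_{k=1}^n \Delta_k, \qquad \Delta_k := \log\frac{W_k}{W_{k-1}} = -\Lambda(\beta) + \log \sum_y \mu_{k-1}(y)\, e^{\beta \omega(k,y)},
\]
where $\mu_{k-1}(y) := \DP^{k-1,\beta}_x(S_k=y)$ is $\mathcal G_{k-1}$-measurable and satisfies $\sum_y \mu_{k-1}(y)^2 = I_k(x)$. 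The main analytic input will be a two-sided comparison: there exist constants $0 < c_1(\beta) \leq c_2(\beta) < \infty$ such that for every probability measure $p$ on $V$,
\[
 c_1 \sum_y p_y^2 \;\leq\; -\IE\log\sum_y p_y\, e^{\beta \omega(1,y) - \Lambda(\beta)} \;\leq\; c_2 \sum_y p_y^2.
\]
To obtain this I would write the argument of the log as $1+U$ with $U := \sum_y p_y(e^{\beta\omega(y)-\Lambda(\beta)}-1)$, expand $\log(1+u) = u - u^2/2 + O(u^3)$ on $\{|U| \leq \tfrac12\}$, and use Assumption \ref{ass:1st}(2) to absorb the complementary event; independence of the $\omega(y)$'s makes the centered second moment of $U$ equal to $(e^{\Lambda(2\beta) - 2\Lambda(\beta)} - 1) \sum_y p_y^2$, providing matching constants. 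Specialised to $p = \mu_{k-1}$ and conditioned on $\mathcal G_{k-1}$, this yields the pointwise (in $\omega$) bound $-\IE[\Delta_k \mid \mathcal G_{k-1}] \in [c_1 I_k(x),\, c_2 I_k(x)]$.

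Next I would perform the Doob decomposition $-\log W_n = A_n + M_n$ with $A_n := -\sum_{k\leq n}\IE[\Delta_k\mid\mathcal G_{k-1}]$ predictable and $M_n$ a mean-zero martingale; the comparison above gives $A_n \in [c_1\sum_{k\leq n}I_k(x),\, c_2\sum_{k\leq n}I_k(x)]$ a.s. Applying Theorem \ref{th:ConcentrInequality} at $n=1$ conditionally on $\mathcal G_{k-1}$ (admissible since $C(\beta)$ there is graph-independent) gives a uniform bound $\IE[\xi_k^2] \leq C'(\beta)$ for the martingale increments $\xi_k := \IE[\Delta_k\mid\mathcal G_{k-1}] - \Delta_k$, and the $L^2$ martingale SLLN then delivers $M_n/n \to 0$ a.s. Both directions of the equivalence now fall out: a Ces\`aro lower bound $\liminf \tfrac{1}{n}\sum_{k\leq n} I_k(x) \geq c$ a.s.\ combined with Fatou (using $I_k \leq 1$) gives $\limsup \tfrac{1}{n}\IE\log W_n \leq -c_1 c < 0$, which is very strong disorder; conversely, very strong disorder together with \eqref{eq:quenchedVerystrong} yields $\liminf(-\tfrac{1}{n}\log W_n) = -\bar p > 0$ a.s., and $M_n/n\to 0$ forces $\liminf A_n/n \geq -\bar p$, whence $\liminf \tfrac{1}{n}\sum_k I_k(x) \geq -\bar p/c_2 > 0$ a.s.

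I would obtain the equivalence with the $\sup_y \DP^{k,\beta}_x(S_k=y)$ formulation by running the same argument with the tilted endpoint law $\tilde\mu_k(y) := \DP^{k,\beta}_x(S_k=y)$ and using the sandwich $\sum_y \tilde\mu_k(y)^2 \leq \sup_y \tilde\mu_k(y) \leq \sqrt{\sum_y \tilde\mu_k(y)^2}$ together with Cauchy--Schwarz in the Ces\`aro average. The final ``in particular'' statement under mere strong disorder is the classical result of \cite{CaHu02}; its proof transfers verbatim and combines $\sum_n I_n(x) = \infty$ a.s.\ (Theorem \ref{th:CSY}) with a separate, non-Ces\`aro argument ruling out $\max_y \tilde\mu_n(y) \to 0$ when $W_\infty = 0$. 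The chief obstacle is the two-sided lemma uniformly in $p$: the upper bound is essentially Jensen, but the lower bound requires controlling the tails of $U$ so that the centered quadratic genuinely captures $-\IE\log(1+U)$, which is where Assumption \ref{ass:1st}(2) together with a truncation at scale $e^{\Lambda(2\beta)}$ are essential.
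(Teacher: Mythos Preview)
The paper does not prove this theorem; it cites \cite{CSY03} (and their Remark~2.5) and asserts that the argument carries over without change. Your sketch is exactly that argument---Doob-decompose $-\log W_n$, compare the compensator termwise to $I_k$, and control the martingale part by the strong law---so at the level of strategy you are aligned with what the paper invokes.

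One correction: you mislabel which side of the two-sided bound is elementary. Writing $-\IE[\Delta_k\mid\mathcal G_{k-1}]=\IE[R(U)\mid\mathcal G_{k-1}]$ with $R(u)=u-\log(1+u)\ge 0$ (using $\IE[U\mid\mathcal G_{k-1}]=0$), Jensen for the concave logarithm only yields the trivial $\IE R(U)\ge 0$. Neither nontrivial inequality is ``essentially Jensen'': the upper bound $\IE R(U)\le c_2\,\IE U^2$ is the more delicate one, since $R(u)\to\infty$ as $u\to-1^+$ and one must control the left tail of $1+U$; the lower bound requires handling the right tail of $U$, where $R(u)/u^2\to 0$. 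Both directions use Assumption~\ref{ass:1st}(2) substantially. A second minor gap: when passing to \eqref{eq:mass_concentration} you invoke the sandwich $\sum p^2\le\max p\le\sqrt{\sum p^2}$, but $I_k$ is built from the horizon-$(k{-}1)$ law $\mu_{k-1}$ whereas \eqref{eq:mass_concentration} uses the horizon-$k$ endpoint law; the one-step tilt $e^{\beta\omega(k,\cdot)-\Lambda}W_{k-1}/W_k$ between them must be absorbed in the Ces\`aro average before the sandwich applies.
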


For the lattice/SRW model, the authors in \cite{BatesChatterjee} have gone deeper into the description of the (endpoint) localization phenomena. They showed that in the full very strong disorder region, the mass of the endpoint concentrates asymptotically on some small islands -- a phenomena also called \emph{asymptotic pure atomicity}. More precisely, they proved that if
 \[\mathcal A_k^\varepsilon = \{x\in \mathbb Z^d:\  \DP^{k,\b}_0(S_k=x)\},\]
then, for every sequence $(\varepsilon_k)_{k\geq 0}$ vanishing as $n\to\infty$,
\[\lim_{n\to\infty} \sum_{k=0}^{n-1} \DP^{k,\b}_0\left(S_k\in A_k^{\varepsilon_k}\right) = 1,\quad \IP\text{-a.s.}\]
 In a related continuous setting, localization properties of the Brownian
 polymer \emph{full path} (i.e.\ not restricted to the endpoint properties) have been shown to hold deep inside the very strong disorder region
 in \cite{CYBMPO2}.


\subsection{The $L^2$-region} \label{subsec:L2regionIntro}
A range of parameters
that plays an important role in the literature because
it is tailored to moment computations,
is the $L^2$-\emph{region}, which 
corresponds to the set of $\b$'s such that the martingale $(W_n(x,\b))_n$ is bounded in $L^2$, i.e.\
\begin{equation} \label{eq:L2conditionIntro}
 \forall x\in V,\, \sup\nolimits_{n} \IE W_n(x,\b)^2 < \infty.
\end{equation}
The following easy proposition allows for the definition of a threshold for the $L^2$ region, similar to $\b_c$,
\begin{proposition} \label{prop:L2}
  There is a parameter $\b_2\in [0,\infty]$ such that \eqref{eq:L2conditionIntro} holds for $\b\in [0,\b_2)$ and $\sup\nolimits_{n} \IE W_n(x,\b)^2=\infty$ for all $x\in V$ when $\b>\b_2$.
\end{proposition}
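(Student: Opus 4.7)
The plan is to prove Proposition \ref{prop:L2} in two steps: first establish monotonicity of $\sup_n \IE[W_n(x,\b)^2]$ in $\b$ for each fixed $x$, and then show that the $L^2$ property is independent of the starting point~$x$.

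\textbf{Step 1 (Standard second moment formula and monotonicity).} I would begin with the classical decomposition of the second moment in terms of the collision local time of two independent walks. Setting $L_n = \sum_{k=1}^n \mathbf{1}_{S_k = S'_k}$ and integrating out the environment $\omega$, which is i.i.d.\ across $(i,z)$, I get
\begin{equation*}
\IE[W_n(x,\b)^2] = \DE_x^{\otimes 2}\!\left[\exp\!\left(\lambda(\b)\, L_n\right)\right], \qquad \lambda(\b) := \Lambda(2\b) - 2\Lambda(\b).
\end{equation*}
Convexity of $\Lambda$ (which is guaranteed because $\Lambda$ is a cumulant generating function, finite on $[0,\infty)$ by Assumption \ref{ass:1st}) implies $\lambda(\b) \geq 0$ and that $\lambda'(\b) = 2\Lambda'(2\b)-2\Lambda'(\b) \geq 0$. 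Hence $\b \mapsto \IE[W_n(x,\b)^2]$ is non-decreasing for every fixed $n$ and $x$, so the set $I(x) := \{\b \geq 0 : \sup_n \IE[W_n(x,\b)^2] < \infty\}$ is an initial subinterval of $[0,\infty)$ containing $0$. Set $\b_2(x) := \sup I(x)$.

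\textbf{Step 2 (Independence of $x$).} The core task is to show $I(x) = I(y)$ for all $x,y \in V$. By the irreducibility assumption, pick a sequence $x = x_0, x_1, \ldots, x_k = y$ with $P(x_{i-1},x_i) > 0$, and let $c_{x,y} := \prod_{i=1}^k P(x_{i-1},x_i) > 0$. Conditioning on the first $k$ steps of $S$ and bounding from below by restricting to the path $(x_0,\dots,x_k)$ yields
\begin{equation*}
W_{n+k}(x,\b) \;\geq\; c_{x,y}\cdot \exp\!\Bigl(\b \textstyle\sum_{i=1}^k \omega(i,x_i) - k\Lambda(\b)\Bigr)\cdot \widetilde{W}_n(y,\b),
\end{equation*}
where $\widetilde{W}_n(y,\b)$ is built from the time-shifted environment $\{\omega(k+j,\cdot)\}_{j\geq 1}$, hence has the same law as $W_n(y,\b)$ and is independent of the explicit exponential factor (which only uses times $1,\dots,k$). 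Squaring and integrating against $\IP$, the independence gives
\begin{equation*}
\IE[W_{n+k}(x,\b)^2] \;\geq\; c_{x,y}^2 \cdot e^{k\lambda(\b)} \cdot \IE[W_n(y,\b)^2].
\end{equation*}
Thus $\sup_n \IE[W_n(x,\b)^2] < \infty$ forces $\sup_n \IE[W_n(y,\b)^2] < \infty$; swapping the roles of $x$ and $y$ establishes $I(x) = I(y)$. Therefore $\b_2 := \b_2(x)$ is well defined, and by the monotonicity of Step 1 the $L^2$ condition holds for $\b \in [0,\b_2)$ and fails (at every $x$) for $\b > \b_2$.

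The main subtlety is Step 2: one has to set up the environment decomposition so that the lower-bound factor built from $\omega(1,\cdot),\dots,\omega(k,\cdot)$ is literally independent of $\widetilde{W}_n(y,\b)$, which is why I insist on using a time-shifted copy of the partition function rather than applying the Markov property naively. Everything else is essentially an application of convexity of $\Lambda$ and irreducibility.
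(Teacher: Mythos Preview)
Your proof is correct and follows essentially the same route as the paper: the second-moment identity $\IE[W_n(x,\b)^2]=\DE_x^{\otimes 2}[e^{\Lambda_2(\b)L_n}]$, monotonicity of $\Lambda_2(\b)=\Lambda(2\b)-2\Lambda(\b)$, and independence of the starting point via irreducibility. The only cosmetic difference is in Step~2: the paper works directly in the two-walk representation and applies the strong Markov property for the pair $(S,S')$ at the hitting time $\tau_{(y,y)}$, whereas you bound $W_{n+k}(x,\b)$ from below by forcing a deterministic $k$-step path to $y$ and use time-independence of the environment before squaring; both tricks are standard and yield the same inequality.
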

\begin{remark}
  \label{rem-birkner}
  It is immediate that $\b_2\leq \b_c$. For the lattice/SRW model, it is further known that $0< \b_2 < \b_c$ for $d\geq 3$ \cite{birkner2011collision,BirknerSun,BergerToninelli,BirknerSun11}  (in particular, see
\cite[Section 1.4]{BirknerSun} for d = 3,4). In our general setting,
we will construct graphs for which SRW satisfies
the a priori surprising property that $\b_c>0$, but $\b_2=0$,
see Theorem \ref{th:ZdwithPipes}.
\end{remark}

Compared to the full weak disorder region, the $L^2$-region has the advantage of allowing second-moment computations which, for example, lead to the first proofs of diffusivity of the path for the lattice in the $L^2$-region (and $d\geq 3$), see  Remark 3.3 in \cite{CStFlour} for a summary on the matter.
In our general context, similar considerations bring us to the following
result, whose proof is given in  Section \ref{sec:L2region}.
Recall that $|S_n|=d(S_n,S_0)$. 
\begin{theorem} \label{th:diffL2} Assume 
 \eqref{eq:L2conditionIntro}, and
  that there exist a random variable $Z$ and a
  deterministic sequence $a_n\to_{n\to\infty}\infty $ satisfying
  $a_n/a_{n-\ell} \to 1$  for all $\ell>0$, such that for
  all $x\in V$,
  \[a_n^{-1}|S_{n}| \cvlaw Z,\quad \mbox{\rm as $n\to\infty$, under $\DP_x$.}\]
 Then, for all bounded and continuous function $F$, as $n\to\infty$,
\begin{equation} \label{eq:diffL2_proba}
\DE^{n,\omega}_x\left[F\left(a_n^{-1}|S_{n}|\right)\right] \cvIP  \DE[F(Z)].
\end{equation}
\end{theorem}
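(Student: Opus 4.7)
The plan is to adapt the standard second-moment argument from the $\mathbb Z^d$/SRW setting. Writing
\[\DE^{n,\omega}_x[F(|S_n|/a_n)] = Y_n(F)/W_n(x),\qquad Y_n(F):=\DE_x\!\left[F(|S_n|/a_n)\,e^{\beta\sum_{i=1}^n\omega(i,S_i)-n\Lambda(\beta)}\right],\]
and setting $c:=\DE[F(Z)]$, I would first note that the hypothesis \eqref{eq:L2conditionIntro} forces $W_n(x)\to W_\infty(x)$ in $L^2(\IP)$, so $\IE W_\infty(x)=1$ and Proposition \ref{prop:StrongWeakDisorderIntro} yields $W_\infty(x)>0$ almost surely. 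It therefore suffices to show $Y_n(F)-c\,W_n(x)\to 0$ in $L^2(\IP)$, since then $Y_n(F)/W_n(x)\to c$ in $\IP$-probability.

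A direct Fubini computation exploiting the independence of the environment gives, with $\lambda:=\Lambda(2\beta)-2\Lambda(\beta)$ and $L_n(S,S'):=\sum_{i=1}^n\mathbf{1}_{S_i=S'_i}$,
\[\IE\!\left[(Y_n(F)-cW_n(x))^2\right] = \DE_x^{\otimes 2}\!\left[(F_n(S)-c)(F_n(S')-c)\,e^{\lambda L_n}\right],\]
where $F_n(S):=F(|S_n|/a_n)$. The hypothesis \eqref{eq:L2conditionIntro} exactly means $\sup_n\DE_x^{\otimes 2}[e^{\lambda L_n}]<\infty$, and monotone convergence then yields $\DE_x^{\otimes 2}[e^{\lambda L_\infty}]<\infty$ for $L_\infty:=\lim_n L_n$.

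The crux is a two-scale split. For fixed $m'<n$ write $e^{\lambda L_n}=e^{\lambda L_{m'}}+e^{\lambda L_{m'}}(e^{\lambda(L_n-L_{m'})}-1)$. The contribution of the remainder to the expectation above is bounded in absolute value by
\[4\|F\|_\infty^2\bigl(\DE_x^{\otimes 2}[e^{\lambda L_n}]-\DE_x^{\otimes 2}[e^{\lambda L_{m'}}]\bigr)\leq 4\|F\|_\infty^2\bigl(\DE_x^{\otimes 2}[e^{\lambda L_\infty}]-\DE_x^{\otimes 2}[e^{\lambda L_{m'}}]\bigr),\]
which tends to $0$ as $m'\to\infty$, uniformly in $n\geq m'$. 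For the principal term, condition on $\mathcal F_{m'}:=\sigma(S_i,S'_i:i\leq m')$; the Markov property renders the two futures conditionally independent, hence
\[\DE_x^{\otimes 2}\!\left[(F_n(S)-c)(F_n(S')-c)\mid\mathcal F_{m'}\right]=\bigl(g_n(S_{m'})-c\bigr)\bigl(g_n(S'_{m'})-c\bigr),\]
where $g_n(y):=\DE_y[F(d(T_{n-m'},x)/a_n)]$ for a Markov chain $T$ started at $y$. Since $|d(T_{n-m'},x)-d(T_{n-m'},y)|\leq d(y,x)\leq m'$ for $y\in\{S_{m'},S'_{m'}\}$, while the hypothesis applied at starting point $y$ gives $d(T_{n-m'},y)/a_{n-m'}\cvlaw Z$ under $\DP_y$, combined with $a_n/a_{n-m'}\to 1$, one obtains $g_n(y)\to c$ for every $y$. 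Bounded convergence against the integrable dominating function $4\|F\|_\infty^2\,e^{\lambda L_{m'}}$ then sends the principal term to $0$ as $n\to\infty$ for each fixed $m'$; letting $m'\to\infty$ afterwards closes the argument.

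The main obstacle is that $F_n(S)$ converges only in distribution under $\DP_x$, not almost surely, so one cannot apply dominated convergence directly against the a.s.\ increasing weight $e^{\lambda L_n}\uparrow e^{\lambda L_\infty}$. The two-scale split disentangles the two modes of convergence: the a.s.\ behaviour of the collision weight is captured by $e^{\lambda L_{m'}}$, while the in-distribution convergence of the endpoint is processed after conditioning on $\mathcal F_{m'}$, where the quantitative assumption $a_n/a_{n-\ell}\to 1$ is precisely what allows transferring the hypothesis on $|S_n|/a_n$ to walks started at the internal positions $S_{m'},S'_{m'}$.
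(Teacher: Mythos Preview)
Your proof is correct and follows essentially the same approach as the paper's: both reduce to showing $\IE[(Y_n(F)-cW_n)^2]\to 0$, express this as a two-walk expectation weighted by $e^{\lambda L_n}$, split at a fixed time $m'$ (the paper's $\ell$), and use the Markov property together with the hypothesis $a_n/a_{n-\ell}\to 1$ to handle the principal part. The paper organizes the endgame as a proof of the joint weak convergence $(N_n,|S_n|/a_n,|S'_n|/a_n)\cvlaw(N,X_1,X_2)$ with independence (using Lipschitz test functions) and then invokes domination by $e^{\Lambda_2 N}$, whereas your direct split of $e^{\lambda L_n}$ and Slutsky-type reduction is a minor, slightly more streamlined variant of the same idea.
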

\begin{remark}Recently,
the rate of convergence in $W_n \to W_\infty$ and the nature of the fluctuations for the lattice/SRW model
have been obtained in the full region $[0,\b_2)$ in \cite{CoNa20,CL16}. It is believed that the speed and nature should be different in the region $[\b_2,\b_c)$. See also \cite{LyZy20,CoNaNa20,GuRyZe18,DuGuRyZe20,MaUn18} where similar questions appear in the study of the regularized SHE and KPZ equation in dimension $d\geq 3$. We do not touch upon these questions in this paper.
\end{remark}

\section{General results}
\label{sec-2}
We state in this section our general results for the polymer model.  In Subsection \ref{subsec-crit} we give conditions for $\b_2=0$ or $\b_2>0$ in terms of quantitative
transience/recurrence of $(G,S)$, and in particular in terms of heat kernel estimates and volume growth. We also show in Theorem \ref{th:isoper} that
 recurrent walks admitting appropriate heat kernel bounds satisfy $\b_c=0$. Subsection \ref{sec:UIintro} is devoted to the weak disordered regime. We give sufficient
 conditions for  the uniform integrability of
 $W_n(x,\b)$ in terms of graph notions such as the Liouville property
 and existence of good graph isomorphisms. (We emphasize that weak disorder does not imply uniform integrability, see Proposition \ref{prop:counterExUI}.)
Along the way, we refer to counter examples to natural conjectures; these counter examples are constructed later in the paper.

Throughout, we assume without stating it explicitly
that Assumption \ref{ass:1st} holds.
Recall that a random walk on $G$ with transition probability $P(x,y)$ is called
\textit{reversible} (with reversing measure $\pi$)
if $\pi$ is a positive measure on $V$ so that
for any $x,y\in V$,
$\pi(x)P(x,y)=\pi(y) P(y,x)$.
\subsection{Critical parameters}
\label{subsec-crit}
We begin with a sufficient condition for $\beta_2=0$, in the reversible setup,
for recurrent walks.
This condition covers the (known) case of SRW on $\mathbb Z^d$ for $d=1,2$, and applies to reversible
walks.
\begin{theorem} \label{th:noL2rec}
Suppose that $(S_k)$ is recurrent and reversible with a reversing
measure $\pi$ satisfying $\inf_{x\in V} \pi(x)>0$.
Then $\b_2 = 0$.
\end{theorem}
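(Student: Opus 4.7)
The plan is to show that $\IE[W_n(x,\b)^2] \to \infty$ as $n\to\infty$ for every $\b > 0$, which by Proposition \ref{prop:L2} forces $\b_2 = 0$. Expanding the second moment by introducing two independent copies $S, S'$ of the walk (as done in \cite{CSY03}) and integrating out the environment site-by-site yields the standard identity
\begin{equation*}
\IE[W_n(x)^2] \;=\; \DE_x^{\otimes 2}\!\left[\exp\!\bigl(\lambda(\b)\,L_n\bigr)\right], \qquad L_n := \sum_{k=1}^n \mathbf{1}_{S_k = S_k'},
\end{equation*}
where $\lambda(\b) := \Lambda(2\b) - 2\Lambda(\b)$ is strictly positive for every $\b>0$ by strict convexity of the cumulant $\Lambda$ (which holds since $\omega$ has variance $1$).

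To estimate this exponential moment of the collision local time $L_n$ from below, I would apply Jensen's inequality in the cheap direction:
\[\DE_x^{\otimes 2}\!\left[e^{\lambda(\b)\,L_n}\right] \;\geq\; \exp\!\bigl(\lambda(\b)\,\DE_x^{\otimes 2}[L_n]\bigr),\]
so the whole task reduces to proving that
\[\DE_x^{\otimes 2}[L_n] \;=\; \sum_{k=1}^n \sum_{y\in V} p_k(x,y)^2\]
diverges as $n\to\infty$.

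This is where the reversibility and the lower bound on $\pi$ enter. From $\pi(x)p_k(x,y)=\pi(y)p_k(y,x)$ one obtains the identity
\[p_{2k}(x,x) \;=\; \sum_y p_k(x,y)\,p_k(y,x) \;=\; \pi(x)\sum_y \frac{p_k(x,y)^2}{\pi(y)},\]
which, combined with $\pi_* := \inf_y \pi(y) > 0$, gives
\[\sum_y p_k(x,y)^2 \;\geq\; \frac{\pi_*}{\pi(x)}\,p_{2k}(x,x).\]
It therefore suffices to see that $\sum_k p_{2k}(x,x) = \infty$. Recurrence gives $\sum_k p_k(x,x) = \infty$. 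Viewing the transition operator as self-adjoint on $\ell^2(V,\pi)$, the sequence $k\mapsto p_{2k}(x,x)$ is non-increasing, and Cauchy--Schwarz yields $p_{2k+1}(x,x)\leq \sqrt{p_{2k}(x,x)\,p_{2k+2}(x,x)}$, so the odd-step returns are controlled by the even-step ones and $\sum_k p_{2k}(x,x)=\infty$ as required. Chaining these bounds, $\DE_x^{\otimes 2}[L_n]\to\infty$, hence $\IE[W_n(x,\b)^2]\to\infty$ for every $\b>0$, i.e. $\b_2=0$.

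The only mildly delicate point is the passage from the recurrence assumption $\sum_k p_k(x,x)=\infty$ to $\sum_k p_{2k}(x,x)=\infty$, which genuinely uses reversibility (both for the odd/even comparison and for the identity relating $p_{2k}(x,x)$ to $\sum_y p_k(x,y)^2/\pi(y)$). The hypothesis $\inf_y \pi(y)>0$ is what allows this identity to be turned into a lower bound on the collision probability \emph{without any pointwise heat-kernel estimates on $p_k(x,y)$}; this is the feature that makes the statement go well beyond the SRW on $\Z^{1},\Z^{2}$ case.
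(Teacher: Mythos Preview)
Your proof is correct and follows essentially the same route as the paper's: compute the second moment as an exponential moment of the collision local time, reduce (via $e^x\ge 1+x$ / Jensen) to showing $\sum_{k}\sum_y p_k(x,y)^2=\infty$, and use the reversibility identity together with $\inf_y\pi(y)>0$ to bound this below by a constant times $\sum_k p_{2k}(x,x)$. The only difference is in the last step: the paper argues directly that the two--step chain $(S_{2k})$ is recurrent by a sample--path/period argument, whereas you use the spectral representation (monotonicity of $k\mapsto p_{2k}(x,x)$ and Cauchy--Schwarz for the odd steps). Both arguments are valid and of comparable length; your spectral version has the small advantage of being completely quantitative and avoiding any case distinction on the period of $x$.
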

\begin{remark}
If $(S_k)$ is a SRW then
the condition on $\pi$
in  Theorem \ref{th:noL2rec} is always satisfied, since in that case $\pi(x)=d_x\geq 1$
is a reversing measure.
\end{remark}

\begin{remark}
The condition that $(S_k)$ is recurrent on $G$ is not sufficient for the conclusion of Theorem \ref{th:noL2rec} to hold, see 
 Section \ref{sec:L2regionRec} for a counter-example.
 \end{remark}

A sufficient condition for $\b_2=0$
involves the intersection of pair of paths.
\begin{theorem} \label{th:ConditionForNoL2}
Let $(S),(S')$ denote independent copies
of $(S)$ and assume that
\begin{equation} \label{eq:1dconditionNoL2}
\sup_{x\in V} \DE^{\otimes 2}_{x}\sum_{k\geq 0} \mathbf{1}_{S_k=S_k'=x} = \infty.
\end{equation}
Then, $\b_2=0$.
\end{theorem}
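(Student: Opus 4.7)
The proof will use the standard second-moment identity. Squaring the partition function, introducing a second independent walk $S'$, taking the environment expectation via Fubini, and exploiting the i.i.d.\ property of $\omega$, one obtains
\[
\IE\bigl[W_n(x,\b)^2\bigr] \;=\; \DE_x^{\otimes 2}\bigl[e^{\lambda(\b) L_n}\bigr], \qquad L_n := \sum_{i=1}^n \mathbf{1}_{S_i=S_i'},
\]
where $\lambda(\b) := \Lambda(2\b) - 2\Lambda(\b) > 0$ for every $\b > 0$ by strict convexity of $\Lambda$ together with $\Lambda(0) = 0$. This is because the local factor $\IE[e^{\b(\omega(i,S_i)+\omega(i,S_i'))}]$ equals $e^{\Lambda(2\b)}$ when $S_i=S_i'$ and $e^{2\Lambda(\b)}$ otherwise; combining over $i$ and dividing by $e^{2n\Lambda(\b)}$ leaves exactly the product above. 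This reduces the $L^2$ question to control of the exponential moments of the overlap $L_n$.

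Using the hypothesis, pick a vertex $x^*$ at which $\sum_{k\geq 0}\DP_{x^*}^{\otimes 2}(S_k=S_k'=x^*)=\sum_k p_k(x^*,x^*)^2=\infty$ (by renewal theory this is equivalent to recurrence of the paired chain $(S,S')$ at the diagonal state $(x^*,x^*)$, since the Green function there equals $(1-q(x^*))^{-1}$ with $q(x^*)$ the joint return probability to $(x^*,x^*)$). Bounding $L_n\geq \sum_{k=1}^n\mathbf{1}_{S_k=S_k'=x^*}$ and applying monotone convergence,
\[
\DE_{x^*}^{\otimes 2}[L_n] \;\geq\; \sum_{k=1}^n p_k(x^*,x^*)^2 \;\xrightarrow[n\to\infty]{}\; \infty.
\]
Jensen's inequality applied to the convex function $u\mapsto e^{\lambda(\b)u}$ then gives $\IE W_n(x^*,\b)^2 \geq \exp\bigl(\lambda(\b)\,\DE_{x^*}^{\otimes 2}[L_n]\bigr) \to \infty$, so $\sup_n \IE W_n(x^*,\b)^2=\infty$ for every $\b>0$. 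By the defining property of $\b_2$ in Proposition~\ref{prop:L2}, this forces $\b_2\leq\b$ for every $\b>0$, i.e.\ $\b_2=0$.

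The main point requiring care is the interpretation of the supremum in the hypothesis. The cleanest case is when some $x^*$ actually satisfies $f(x^*):=\sum_k p_k(x^*,x^*)^2=\infty$, and the argument above applies directly at that vertex. If instead $f$ is finite everywhere on $V$ but has infinite supremum, one must transfer the divergence of the second moment between base points. I would do this by applying the strong Markov property for $(S,S')$ at a simultaneous hitting time of a vertex $x_M$ with $f(x_M)\geq M$: this yields a bound of the form $\IE W_n(y,\b)^2 \geq c(y,x_M)\,\IE W_{n/2}(x_M,\b)^2$, where the factor $c(y,x_M)>0$ comes from irreducibility of $S$, and the right-hand side is made arbitrarily large via the Jensen estimate applied at $x_M$, closing the argument for every starting vertex $y$.
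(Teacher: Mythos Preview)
Your argument for the case where some vertex $x^\star$ actually satisfies $\sum_k p_k(x^\star,x^\star)^2=\infty$ is correct and in fact simpler than the paper's route: Jensen applied to the second-moment identity immediately gives $\sup_n\IE W_n(x^\star,\b)^2=\infty$, and Proposition~\ref{prop:L2} transfers this to every starting point.

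The gap is in the second case, where $f(x):=\sum_k p_k(x,x)^2$ is finite everywhere but has infinite supremum. There Jensen at $x_M$ yields only the \emph{finite} lower bound $\sup_n\IE W_n(x_M,\b)^2\ge e^{\Lambda_2(\b) f(x_M)}$, while your transfer inequality loses the factor $c(y,x_M)=\DP_y^{\otimes 2}(\tau_{(x_M,x_M)}<\infty)$, which depends on $x_M$. Nothing controls the location of $x_M$, so $c(y,x_M)$ may decay faster than $e^{\Lambda_2(\b) M}$ grows as $M\to\infty$; the product $c(y,x_M)e^{\Lambda_2(\b) M}$ need not diverge, and the argument does not close.

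The fix is already implicit in your parenthetical about the geometric law of returns: for fixed $\b>0$, pick a \emph{single} $x^\star$ (depending on $\b$) with $f(x^\star)$ large enough that the return probability $q(x^\star)=1-1/f(x^\star)$ satisfies $q(x^\star)e^{\Lambda_2(\b)}\ge 1$. Then the number of joint returns of $(S,S')$ to $(x^\star,x^\star)$ is geometric with success probability $1-q(x^\star)$ and already has infinite exponential moment at rate $\Lambda_2(\b)$, so $\sup_n\IE W_n(x^\star,\b)^2=\infty$ for that one vertex, and Proposition~\ref{prop:L2} finishes. This is exactly how the paper proceeds, phrased via the chaos expansion of Proposition~\ref{prop:L2viaChaos}: the $k$-th term is bounded below, forcing all meetings to occur at a fixed $x^\star$, by $C\cdot f(x^\star)^{k-1}$ with $C=\sum_m p_m(x_0,x^\star)^2>0$, and $x^\star$ is chosen so that $(e^{\Lambda_2(\b)}-1)f(x^\star)\ge 1$, making the resulting geometric series diverge. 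The key point in both versions is that $x^\star$ is chosen once as a function of $\b$ and not sent to infinity, so the nuisance constant $C$ does not compete with the divergence.
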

\noindent Note that condition \eqref{eq:1dconditionNoL2} can be written as
\begin{equation}
\label{eq-bloc58c}
\sup_{x\in V} \sum_{k\geq 0} P_x(S_k=x)^2  = \infty
\end{equation}
A refinement of Theorem \ref{th:ConditionForNoL2} appears in
Theorem \ref{th:ConditionForNoL2bis}.
\begin{remark}
  In Proposition \ref{cor:NoL2forLongPipes} below, we
  describe a family of graphs (including transient ones) with $\b_2=0$.
  This covers the case of the supercritical percolation cluster on $\mathbb Z^d$ with
  $d\geq 2$.
\end{remark}

In the reverse direction,
we require a quantitative criterion. Introduce the
\textit{Green function} for $(S_k)$:
\begin{equation} \label{eq:defGreenFunction}
G(x,y) =  \sum_{k= 0}^\infty \DP_x(S_k=y), \quad x,y\in V.
\end{equation}
\begin{theorem} \label{th:transImpL2}
Suppose that $(S_k)$ is transient and reversible with reversing measure $\pi$
satisfying $\sup_{x\in V} \pi(x)<\infty$.
If
\begin{equation} \label{eq:conditionGreenFunction}
\sup_{x\in V} \frac{G(x,x)}{\pi(x)} < \infty,
\end{equation}
then $\b_2>0$.
\end{theorem}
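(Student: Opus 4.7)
The plan is a standard second-moment argument. By independence of $\omega$ across sites and times,
\begin{equation*}
\IE[W_n(x,\b)^2] = \DE_x^{\otimes 2}\!\left[\exp\!\left(\lambda(\b)\sum_{k=1}^n \mathbf{1}_{S_k=S_k'}\right)\right] \leq \DE_x^{\otimes 2}[e^{\lambda(\b) L}],
\end{equation*}
where $\lambda(\b):=\Lambda(2\b)-2\Lambda(\b)$, $(S_k')$ is an independent copy of $(S_k)$, and $L:=\sum_{k\geq 0}\mathbf{1}_{S_k=S_k'}$ counts the total number of collisions of two independent copies of the walk started at $x$. Since $\Lambda$ is smooth at $0$ with $\Lambda(\b)=\b^2/2+O(\b^3)$, one has $\lambda(\b)\to 0$ as $\b\to 0^+$, so it suffices to produce $\lambda_0>0$ with $\sup_{x\in V}\DE_x^{\otimes 2}[e^{\lambda_0 L}]<\infty$.

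The first key step is to control collision sums via reversibility. Using $\pi(x) p_k(x,y) = \pi(y) p_k(y,x)$ and Chapman--Kolmogorov $\sum_y p_k(x,y) p_k(y,x) = p_{2k}(x,x)$,
\begin{equation*}
\sum_{y\in V} p_k(x,y)^2 = \frac{1}{\pi(x)}\sum_{y\in V}\pi(y) p_k(x,y) p_k(y,x) \leq \frac{\sup_V \pi}{\pi(x)}\, p_{2k}(x,x).
\end{equation*}
Summing in $k$ and invoking the hypothesis $\sup_x G(x,x)/\pi(x)<\infty$ together with $\sup_V\pi<\infty$ yields the uniform first-moment bound
\begin{equation*}
C_0 := \sup_{x\in V}\DE_x^{\otimes 2}[L] = \sup_{x\in V}\sum_{k\geq 0}\sum_{y} p_k(x,y)^2 \;\leq\; \sup_V \pi\cdot\sup_{x\in V}\frac{G(x,x)}{\pi(x)} < \infty.
\end{equation*}

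The second step upgrades this to exponential moments by a Khasminskii-type iteration. Applying the strong Markov property of the product chain $(S_k,S_k')$ at the successive collision times and using the uniform bound $\sup_{z}\DE_z^{\otimes 2}[L]\leq C_0$ to telescope each increment, a direct induction on $n$ yields the factorial-moment estimate
\begin{equation*}
\sup_{x\in V}\DE_x^{\otimes 2}\!\left[\binom{L}{n}\right]\leq C_0^n,\qquad n\geq 0.
\end{equation*}
Plugging this into the identity $e^{\lambda L}=\sum_{n\geq 0}\binom{L}{n}(e^\lambda-1)^n$ (valid for integer $L\geq 0$) gives
\begin{equation*}
\sup_{x\in V}\DE_x^{\otimes 2}[e^{\lambda L}] \leq \frac{1}{1-(e^\lambda-1)C_0}\qquad\text{whenever } (e^\lambda-1)C_0<1.
\end{equation*}
Choosing $\b>0$ small enough that $\lambda(\b)<\log(1+C_0^{-1})$ then makes $\IE[W_n(x,\b)^2]$ bounded uniformly in $n$ and $x$, so $\b_2>0$.

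The only non-routine ingredient is the reversibility manipulation in the first step: it translates the intrinsically two-chain object $\sum_y p_k(x,y)^2$, which governs the collision density, into the one-chain return probability $p_{2k}(x,x)/\pi(x)$, at the cost of the factor $\sup_V\pi$. Without either reversibility or the upper bound on $\pi$ one cannot pass from the Green-function hypothesis to a collision bound; everything else is standard moment bookkeeping.
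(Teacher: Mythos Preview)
Your proof is correct and follows essentially the same approach as the paper: both bound the expected number of collisions via the reversibility identity $\sum_y p_k(x,y)^2 \leq (\sup_V\pi)\,p_{2k}(x,x)/\pi(x)$ and then invoke Khas'minskii's lemma to pass to exponential moments. The only difference is cosmetic---the paper cites Khas'minskii's lemma directly while you spell out the binomial-moment iteration.
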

\begin{remark}
\label{rem-wrong}
When $(S_k)$ is a SRW then $\pi(x)=d_x$ is a
reversing measure and the boundedness
condition in Theorem
\ref{th:transImpL2} means that the degree is uniformly bounded in $V$.
Under this condition, \eqref{eq:conditionGreenFunction} is satisfied if
and only if the Green function is bounded from above.
\end{remark}
\begin{remark} \label{rem:L2existTheorem}
The boundedness condition on $\pi$
in Theorem
\ref{th:transImpL2}
is not necessary: indeed, the biased random walk on a canopy tree is an
example of a reversible transient graph that does not satisfy this property,
while $\b_2>0$ holds for the associated polymer, see Theorem \ref{th:L2regionCanopy}.
\end{remark}

\begin{remark} Condition \eqref{eq:conditionGreenFunction} is satisfied as soon as $(S_k)$ admits a uniform Gaussian heat kernel upper bound as in \eqref{eq:gaussianBoundIsoper} below, for some $d>2$.
\end{remark}

\begin{remark}
There are polymers associated with transient and reversible SRW
that do not possess an $L^2$-region. Indeed,
in Theorem \ref{th:noL2perco} below we show that the SRW
on the supercritical percolation cluster on $\mathbb Z^d$, $d\geq 3$
(which is transient and reversible with reversing measure bounded from above),
has $\beta_2=0$ (in contrast with the full lattice SRW).
Further,
there are reversible and transient walks such that
$\b_c=0$. For an example of the latter,
take  $G=\mathbb Z_+$, with $P(i,i+1)=e^{e^{i+1}}/(e^{e^{i+1}}+e^{e^{i}})$.
(This corresponds to a conductance model with conductances  $C_{i,i+1}=e^{e^i}$.)
It is not hard to verify that 
the resulting random walk is irreducible, transient and reversible, while  a repetition of the proof of Proposition
\ref{prop:counterExUI} shows that $\bar \b_c=\b_c=0$.

There are also polymers associated with a transient
SRW satisfying $0=\b_2<\b_c$.
We exhibit two examples of this phenomenon for SRW on appropriate graphs,
namely
a class of
transient
Galton-Watson trees,
 see Section \ref{sec:transGW}, and a copy of
$\mathbb Z^d$ for $d\geq 4$ with arbitrary long pipes attached on a line,
see Theorem \ref{th:ZdwithPipes}.
\end{remark}

Returning to the general (not necessary reversible) setup,
we begin with the positive recurrent case:
\begin{theorem} \label{th:strongDesForStrongRec}
If $(S_k)$ is positive recurrent, then $\b_c=0$.
\end{theorem}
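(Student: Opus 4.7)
The plan is to show that $W_\infty(x_0,\beta)=0$ almost surely for every $\beta>0$, which by Proposition~\ref{prop:StrongWeakDisorderIntro} gives $\beta_c=0$. My approach exploits the renewal structure afforded by positive recurrence and applies a fractional-moment bound. Fix $x_0\in V$ and let $\tau=\inf\{k\geq 1:S_k=x_0\}$; by positive recurrence, $\DE_{x_0}[\tau]=1/\pi(x_0)<\infty$, so the successive return times form a renewal process with finite mean.

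First I would decompose $Z_n(x_0)$ according to the successive returns to $x_0$. Every trajectory starting at $x_0$ of length $n$ admits a unique description as $k\geq 0$ complete excursions of lengths $L_1,\ldots,L_k$ followed by a terminal partial excursion of length $R=n-L_1-\cdots-L_k$. By the strong Markov property,
\begin{equation*}
Z_n(x_0)=\sum_{\substack{k\geq 0,\,L_1,\ldots,L_k\geq 1,\,R\geq 0\\ L_1+\cdots+L_k+R=n}} \Bigl(\prod_{j=1}^k \bar E_{L_j}(\vartheta^{T_{j-1}}\omega)\Bigr)\,\bar E_R^{\mathrm{part}}(\vartheta^{T_k}\omega),
\end{equation*}
where $\bar E_L(\omega):=\DE_{x_0}[e^{\beta H_L(S)}\mathbf{1}_{\tau=L}]$, $\bar E_R^{\mathrm{part}}(\omega):=\DE_{x_0}[e^{\beta H_R(S)}\mathbf{1}_{\tau>R}]$, $T_j:=L_1+\cdots+L_j$, and $\vartheta$ denotes the time-shift on $\omega$. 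The key observation is that within each summand the $k+1$ factors involve disjoint time-slices of $\omega$ and are therefore \emph{independent}.

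Next I would apply a fractional-moment estimate: for $\theta\in(0,1)$, $(\sum a_i)^\theta\leq\sum a_i^\theta$, and combined with the independence above this gives
\begin{equation*}
\IE\bigl[W_n(x_0)^\theta\bigr]\leq\sum_{k,L_j,R}\Bigl(\prod_{j=1}^k \alpha_{L_j}\Bigr)\gamma_R,\quad \alpha_L:=e^{-\theta L\Lambda(\beta)}\IE[\bar E_L^\theta],\;\gamma_R:=e^{-\theta R\Lambda(\beta)}\IE[(\bar E_R^{\mathrm{part}})^\theta].
\end{equation*}
Forming the generating functions $A(z)=\sum_L \alpha_L z^L$ and $\Gamma(z)=\sum_R \gamma_R z^R$, this right-hand side is bounded (uniformly in $n$) by $\Gamma(1)/(1-A(1))$ once $A(1)<1$ and $\Gamma(1)<\infty$, and since $(W_n^\theta)$ is a supermartingale, $\IE[W_n(x_0)^\theta]\to 0$ then follows. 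Combined with a.s.\ martingale convergence $W_n\to W_\infty$ and Fatou, this forces $W_\infty(x_0)=0$ a.s.

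The main obstacle is exhibiting $\theta\in(0,1)$ and an appropriate choice of ``excursion unit'' making $A(1)<1$. Jensen's inequality alone gives only $\alpha_L\leq P_{x_0}(\tau=L)^\theta$, and $\sum_L P_{x_0}(\tau=L)^\theta\geq 1$ in general, so bare Jensen is insufficient. The remedy is coarse-graining: I would redefine the excursion unit to be a block of $M$ consecutive complete excursions. The block weight is a product of $M$ independent single-excursion weights, and strict Jensen amplifies the multiplicative fractional-moment gap by a factor growing in $M$ (non-degeneracy of $\omega$ together with $\beta>0$ forces positive variance of the excursion weights whenever $P_{x_0}(\tau=L)>0$), while finiteness of $\DE_{x_0}[\tau]$ concentrates the total block length around $M/\pi(x_0)$ and controls $\Gamma(1)$. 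Tuning $M=M(\beta)$ large and $\theta=\theta(\beta)$ sufficiently close to $1$ should then drive the coarse-grained $A(1)$ strictly below $1$, closing the argument.
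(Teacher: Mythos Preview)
Your renewal decomposition is clean and the independence across time-slices is correct, but the coarse-graining step does not close the argument. The crucial observation is that, precisely \emph{because} the excursion weights in a block are independent, passing to blocks of $M$ excursions achieves nothing: if $\tilde\alpha_L:=e^{-\theta L\Lambda(\beta)}\IE[\bar E_L^\theta]$ and $A(1)=\sum_L\tilde\alpha_L$, then the block generating function is exactly
\[
A^{(M)}(1)=\sum_{L_1,\dots,L_M}\prod_{j=1}^M\tilde\alpha_{L_j}=A(1)^M,
\]
so $A^{(M)}(1)<1$ is equivalent to $A(1)<1$. There is no amplification. The ``strict Jensen gap'' you invoke is already fully recorded in each factor $\tilde\alpha_L$; taking products of independent factors just raises $A(1)$ to a power. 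And the alternative---applying Jensen to the \emph{sum} over internal decompositions $L_1+\cdots+L_M=T$---discards the disorder gain entirely, leaving you with $\sum_T\DP(\tau^{(M)}=T)^\theta$, which diverges with $M$.

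To see that $A(1)<1$ genuinely fails for small $\beta$, note that the only disorder variable a single excursion is guaranteed to touch is $\omega(L,x_0)$ at the return time, giving $\tilde\alpha_L\le e^{-\rho}\DP_{x_0}(\tau=L)^\theta$ with $\rho=\theta\Lambda(\beta)-\Lambda(\theta\beta)$. Expanding at $\theta=1-\epsilon$,
\[
e^{-\rho}\sum_L\DP_{x_0}(\tau=L)^\theta
\;\approx\;1+\epsilon\bigl[H(\tau)-(\beta\Lambda'(\beta)-\Lambda(\beta))\bigr],
\]
where $H(\tau)$ is the Shannon entropy of the return-time law. Since $\beta\Lambda'(\beta)-\Lambda(\beta)\sim\beta^2/2$ as $\beta\to 0$, the bracket is positive for all small $\beta$ whenever $\tau$ is non-degenerate, and no choice of $\theta$ rescues the bound. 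Positive recurrence alone gives no further disorder to extract per excursion.

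The paper circumvents this by a change of measure rather than a renewal expansion: one tilts every environment variable $\omega(i,x_0)$, $i\le n$, by $-\delta_n$ with $\delta_n=n^{-1/2}$, then controls $\IE[W_n^\alpha]$ via H\"older. The ergodic theorem (not the renewal decomposition) guarantees the walk spends a positive \emph{fraction} $\varepsilon$ of time at $x_0$, so the tilt produces a damping $e^{-c\varepsilon\sqrt{n}}$ in $\IEtilde[W_n]$ while the Radon--Nikodym cost stays bounded. This is what converts ``many visits to $x_0$'' into an estimate uniform in $\beta>0$; your excursion-by-excursion accounting cannot, because the entropic cost $\sum_L\DP(\tau=L)^\theta>1$ is paid once per excursion and the per-excursion disorder gain $e^{-\rho}$ is too weak at small $\beta$.
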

\begin{remark}
If the positive recurrent $(S_k)$
admits return times that have exponential moments, then
$\b_c=\bar \b_c=0$, see
\cite{CaGuHuMe04}.
However, an extra condition beyond positive recurrence
cannot be omitted in general. Indeed,
the positive recurrent $\lambda$-biased walk on a Galton-Watson tree with
$m<\lambda$ provides an example where $\bar \b_c$ can be positive
depending on the characteristics of the offspring distribution,
see Theorem \ref{th:posRecGW}. In particular,
this gives an  example of a polymer where $\b_c < \bar \b_c$. We note that the question whether $\b_c=\bar \b_c$ or not when $d\geq 3$ is still open in the case of the lattice/SRW $\mathbb Z^d$.
\end{remark}

We now introduce a class of walks, for which the existence  of a weak disorder region is determined by the value of the spectral dimension of the walk.
We say that a random walk $S$ satisfies a \emph{sub-Gaussian
  heat kernel upper bound} with parameters $d_f>0,d_w>1$
  if there exist a positive
  measure $\mu$ on $V$, a vertex $x\in V$ and constants $C_x,c>0$, such that
 for all $n>0$,
  \begin{equation} \label{eq:gaussianBound}
    \forall y\in V, \quad  p_n(x,y)\leq C_x n^{-d/2}  e^{-(\frac{d(x,y)^{d_w}}{c n})^{\frac{1}{d_w-1}}} \mu(y),
\end{equation}
where $d=2d_f/d_w$. 
  We say that the sub-Gaussian heat kernel upper bound is \emph{uniform} if
  in addition, $\inf_{x\in V} \mu(x)>0$ and for some $C>0$,
  \begin{equation} \label{eq:gaussianBoundIsoper}
    \forall x,y\in V, \quad   p_n(x,y)\leq C n^{-d/2}  e^{-(\frac{d(x,y)^{d_w}}{c n})^{\frac{1}{d_w-1}}} \mu(y).
\end{equation}
The estimates are called Gaussian if $d_w=2$.
The notation $d_w,d_f$ (for the walk and fractal dimensions)
is borrowed from the theory of random walks on fractals,
see e.g. \cite{KumagaiTakashi2014RWoD} for an extensive introduction. The exponent $d$, often written $d_s$ in the literature,  is referred to as the spectral dimension.

\begin{remark}
The bound  \eqref{eq:gaussianBoundIsoper} 
holds with
$d_w=2$  whenever $\mu$ is a reversing measure for $S$, such that $\inf_{x\in V} \mu(x) > 0$ and such that
$S$ satisfies the $d$-dimensional isoperimetric inequality, see
\cite[pg.\ 40 \& Section 14]{W00}.
\end{remark}

\begin{theorem} \label{th:isoper}
Assume the existence of a measure $\mu$ satisfying $\inf_{x\in V} \mu(x)>0$ and,
 with $\mathcal S(x,r) = \{y\in V, d(x,y)=r\}$,
\begin{equation}
  \label{eq:volume_growth}
  \mu(\mathcal S(x,r))\leq C_V r^{d_f-1}, \quad \text{for all $x\in V$, $r\geq 1$}.
\end{equation} 
(i) If
\eqref{eq:gaussianBound} holds with $d<2$ then $\b_c=0$.\\
 (ii) If \eqref{eq:gaussianBoundIsoper} holds for $d>2$ and $\sup_{x\in V} \mu(x)<\infty$, then $\b_2>0$.
\end{theorem}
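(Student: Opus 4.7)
The two parts rely on quite different polymer techniques: (ii) is a direct second-moment / Khas'minskii computation, while (i) combines a quantitative recurrence estimate with a coarse-graining or fractional-moment step.

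\textbf{Part (ii).} The strategy is the classical $L^2$ approach. I start from the second-moment formula
\[
\IE W_n(x,\b)^2 = \DE_x^{\otimes 2}\!\left[e^{\lambda(\b) L_n}\right], \qquad L_n = \sum_{k=1}^n \mathbf{1}_{S_k = S'_k}, \quad \lambda(\b) := \Lambda(2\b) - 2\Lambda(\b),
\]
so that a uniform bound on $\DE_x^{\otimes 2}[L_\infty]$ suffices. The uniform heat-kernel bound \eqref{eq:gaussianBoundIsoper} together with $\sup_x \mu(x)<\infty$ gives $p_k(x,y) \leq C k^{-d/2}$ uniformly in $x,y$, hence
\[
\sum_y p_k(x,y)^2 \leq \big(\sup_y p_k(x,y)\big) \sum_y p_k(x,y) \leq C k^{-d/2}.
\]
For $d>2$ this is summable, so $\sup_x \DE_x^{\otimes 2}[L_\infty] < \infty$. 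Khas'minskii's lemma then yields $\sup_{x,n}\IE W_n(x,\b)^2 < \infty$ whenever $\lambda(\b)$ is below an explicit threshold, giving $\b_2>0$.

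\textbf{Part (i), quantitative recurrence.} I aim at very strong disorder (hence $\b_c=0$) for every $\b>0$. The first step is to derive the intersection lower bound
\[
\sum_y p_k(x,y)^2 \ge c\, k^{-d/2}.
\]
The sub-Gaussian tail in \eqref{eq:gaussianBound}, combined with an annular decomposition based on the sphere volume bound \eqref{eq:volume_growth} and on $\inf_x \mu(x)>0$, yields $\DP_x(d(S_k,x)>R_k) \to 0$ whenever $R_k/k^{1/d_w}\to\infty$. The same volume estimate gives $|B(x,R_k)| \lesssim R_k^{d_f} \lesssim k^{d/2}$. Applying Cauchy--Schwarz to the restriction of $p_k(x,\cdot)$ to $B(x,R_k)$ delivers the claimed lower bound, and summing shows $\sum_k \sum_y p_k(x,y)^2 = \infty$ when $d<2$.

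\textbf{Upgrade and main obstacle.} The second step converts this intersection divergence into $\b_c=0$. The plan is to follow Lacoin's coarse-graining / fractional-moment strategy: partition time into windows of length $\sim L^{d_w}$ and space into balls of radius $L$, use the intersection lower bound together with a change-of-measure to extract a fractional-moment decay factor strictly less than $1$ per window, then iterate to obtain $\IE W_n^\theta \to 0$ for some $\theta\in(0,1)$. An equivalent option is to appeal to Theorem~\ref{th:CSY}, showing directly that $\sum_n I_n(x)=\infty$ $\IP$-a.s.\ via a size-biased comparison between the polymer overlap process and the free-walk one, concluding with the concentration inequality of Theorem~\ref{th:ConcentrInequality}. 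The principal obstacle is carrying this upgrade out without the scaling/translation structure of $\mathbb Z^d$: the coarse-graining must be intrinsic to the graph, with only the one-sided heat-kernel bound and polynomial volume growth as quantitative input.
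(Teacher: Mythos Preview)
Your Part~(ii) is correct and is essentially the paper's argument (Khas'minskii plus the uniform on-diagonal bound $p_k(x,y)\leq C k^{-d/2}\sup_x\mu(x)$). Your version is in fact slightly slicker than the paper's, which re-derives $\sum_y p_k(x,y)^2=O(k^{-d/2})$ via an annular decomposition using~\eqref{eq:volume_growth}; your one-line bound $\sum_y p_k(x,y)^2\leq \sup_y p_k(x,y)$ does the job and makes the volume-growth hypothesis superfluous for this part.

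For Part~(i), however, there is a genuine gap. Your plan is to establish the free-walk intersection lower bound and then upgrade via Lacoin-type coarse graining. But the coarse-graining step, as carried out in the paper (Theorem~\ref{th:VSDVG}), needs the covering Assumption~\ref{ass:covering}, which is \emph{not} among the hypotheses of Theorem~\ref{th:isoper}(i); without some such uniform geometric control you cannot iterate the fractional-moment factor across blocks. Your alternative route through Theorem~\ref{th:CSY} is also not a proof: $\sum_n I_n=\infty$ concerns the \emph{polymer} overlap, and there is no general size-biased comparison that reduces it to the free-walk intersection divergence you established. Finally, aiming at very strong disorder is stronger than required and, as the paper shows, actually needs extra hypotheses.

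The paper's proof of (i) avoids all of this by a single, global change of measure---no blocks, no iteration. One tilts the environment on
\[
\mathcal C=\{(i,y):i\leq n,\ d(x,y)\leq C_1 n^{1/d_w}\}
\]
by $e^{-\delta_n\omega-\Lambda(-\delta_n)}$ with $\delta_n = C_1^{-d_f/2} n^{-\frac{d_w+d_f}{2d_w}}$. The H\"older cost is $\exp\bigl(O(\#\mathcal C\cdot\delta_n^2)\bigr)$, and $\#\mathcal C\,\delta_n^2=O(1)$ by the volume bound~\eqref{eq:volume_growth} (this is exactly where the choice of $\delta_n$ comes from). On the other hand, under the tilted law each visit of $S$ to the space part of $\mathcal C$ contributes a factor $e^{-\Lambda'(\b)\delta_n(1+o(1))}$ to $\IEtilde W_n$. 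The heat-kernel bound~\eqref{eq:gaussianBound} plus~\eqref{eq:volume_growth} shows the walk spends at least half its time within $C_1 n^{1/d_w}$ of $x$ (for $C_1$ large), so $\IEtilde W_n\leq e^{-c\,n\delta_n}+o(1)$. Since $d<2$ gives $n\delta_n\to\infty$, one concludes $\IE W_n^\alpha\to 0$ for every $\alpha\in(0,1)$, hence $W_\infty=0$ a.s.\ and $\b_c=0$. The missing idea in your proposal is precisely this direct tilt on a single growing ball, calibrated so that $\#\mathcal C\,\delta_n^2$ stays bounded while $n\delta_n\to\infty$.
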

\begin{remark}
If $\mu$ is reversing for $S$, then 
the condition \eqref{eq:volume_growth} with $d_f<2$ implies that $S$ is recurrent, see \cite[Lemma (3.12)]{W00}. 
\end{remark}
\begin{remark}
  The assumptions of Theorem \ref{th:isoper} (in fact, with the stronger
  \eqref{eq:gaussianBoundIsoper} replacing \eqref{eq:gaussianBound})
  hold for SRW on the Sierpi\'{n}ski gasket \cite{jones}
  and on the Sierpi\'{n}ski carpet \cite{Barlow,BB}. Note that one can find a family of Sierpinski gaskets with arbitrary large $d_f$ while $d<2$  \cite{HamblyKumagaiSierpinski}. The bound \eqref{eq:gaussianBoundIsoper} 
  holds for general classes of fractal graphs,  
  see Remark 4.5.3 in \cite{KumagaiTakashi2014RWoD}. 
\end{remark}


\begin{remark}
  \label{rem-CV}
  The conclusion of  Theorem
\ref{th:isoper}(i)  holds for  SRW on a graph $G$ satisfying the uniform
volume growth
$|B(x,r)|\leq Cr^2$ where $B(x,r)=\{y\in V,d(y,x)\leq r\}$,
with a uniform bound on the degree of vertices.
Indeed, for such graphs, the  Carne-Varopoulos bound (see
\cite{W00} or \cite{LPbook}) yields that
$p_n(x,y)\leq C_x e^{-d(x,y)^2/2n}$. Together with the argument
in \cite[Section 6.2.1]{CStFlour}, this immediately yields that $\b_c=0$.
This remark applies to more general walks (not necessarily reversible) satisfying the Carne-Varopoulos
bound (such as in \cite{mathieu}) and graphs satisfying quadratic volume growth.
\end{remark}
To obtain very strong disorder, we need uniform covering  conditions, of the following type.
\begin{assumption} \label{ass:covering} Suppose that there exist  $x_0\in V$ and $C_G>0$, such that for $n$ large enough, for all $m\in \mathbb N$, one can find a sequence of sets $A_i\subset V$ that satisfies $B(x_0,nm)\subset \cup_{i\in I} A_i$, $\mathrm{diam}(A_i) \leq n^{1/d_w}$ and
\begin{equation} \label{eq:unifDim}
\sup_{j\in I}\#\left\{i\in I:kn^{1/d_w}\leq  d(A_i,A_j) < (k+1)n^{1/d_w}\right\} \leq C_G\, e^{ \left(\frac{ k^{d_w}} {c_2}\right)^{\frac 1{d_w-1}}},
 \end{equation}
where $c_2>c$ and $c,d_w$ are as in the  
uniform
sub-Gaussian heat kernel upper bound  \eqref{eq:gaussianBoundIsoper}.
\end{assumption}
Assumption \ref{ass:covering} holds for many fractal graphs, such as the 
Sierpi\'{n}ski gasket and carpet, and their random variants.
\begin{theorem}
  \label{th:VSDVG}
Assume the hypotheses of Theorem \ref{th:isoper}, with the
uniform
  \eqref{eq:gaussianBoundIsoper} replacing \eqref{eq:gaussianBound},
  and in addition
let
Assumption \ref{ass:covering} hold. Then, $\bar \b_c=0$.
Moreover, there exists $C>0$ such that for all $x_0\in V$, $\b\in(0,1)$, 
\begin{equation} \label{eq:freeEnergyEstimateLacoin}
\limsup_{n\to\infty} \frac 1n \IE\log W_n(x_0,\b) \leq -C \beta^{\frac 4 {2-d}}.
\end{equation}
\end{theorem}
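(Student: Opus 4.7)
The plan is to adapt to the present fractal setting the fractional moment and change-of-measure method developed by Lacoin \cite{Lacoin10} for $\mathbb Z^d$/SRW in low dimension. For any $\theta\in(0,1)$, concavity of $x\mapsto x^\theta$ gives
\[\IE\log W_n(x_0,\b)\leq \frac{1}{\theta}\log \IE W_n(x_0,\b)^\theta,\]
so it suffices to prove an exponential decay of $\IE W_n(x_0,\b)^\theta$ at the rate $\b^{4/(2-d)}$. I would fix a coarse-graining scale $T=T(\b)$, write $n=NT$, and split time into blocks $I_k=((k-1)T,kT]$. Using Assumption~\ref{ass:covering} at scale $T$, I cover a large ball around $x_0$ by sets $(A_i)_{i\in I}$ of diameter $\leq T^{1/d_w}$ satisfying the count bound \eqref{eq:unifDim}, and decompose $W_n=\sum_{\underline i} W_n^{\underline i}$, where $\underline i=(i_1,\ldots,i_N)$ records $S_{kT}\in A_{i_k}$. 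Subadditivity yields $\IE W_n^\theta\leq \sum_{\underline i}\IE(W_n^{\underline i})^\theta$.

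For each coarse path $\underline i$, I would perform a local change of measure $\widetilde\IP_{\underline i}$ that shifts $\om(t,y)$ by $-h$ on $\Gamma_{\underline i}:=\bigcup_k I_k\times A_{i_k}$ and leaves it unchanged elsewhere, with $h=h(\b)>0$ to be tuned. Hölder gives
\[\IE(W_n^{\underline i})^\theta\leq \bigl(\widetilde\IE_{\underline i} W_n^{\underline i}\bigr)^\theta\cdot \exp\bigl(c(\theta)h^2|\Gamma_{\underline i}|\bigr)^{1-\theta},\]
using Assumption~\ref{ass:1st}(ii) to bound the relative-entropy cost by $O(h^2)$ per tilted site for small $h$. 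Under $\widetilde\IP_{\underline i}$ the polymer acquires a factor $e^{-\b h L_{\underline i}(S)}$, with $L_{\underline i}(S)=\sum_t \mathbf 1_{(t,S_t)\in\Gamma_{\underline i}}$, and the Markov property factorises $\widetilde\IE_{\underline i} W_n^{\underline i}$ into the successive inter-block transitions of $(S_k)$, each controlled by the heat kernel \eqref{eq:gaussianBoundIsoper} applied over time $T$.

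The sum over $\underline i$ is controlled block-by-block by comparing the heat-kernel tail $\exp\bigl(-(j^{d_w}/c)^{1/(d_w-1)}\bigr)$ for a $k$-th inter-block transition at distance $\sim jT^{1/d_w}$ against the annular count $\exp\bigl((j^{d_w}/c_2)^{1/(d_w-1)}\bigr)$ granted by Assumption~\ref{ass:covering}. Since $c_2>c$, the series in $j$ is summable with $O(1)$ mass per block, and one may effectively reduce to the diagonal contribution $i_k$ close to $i_{k-1}$. One is then left with a per-block competition: entropy cost $\exp(c h^2 T^{1+d/2})$ (using $\mu(A_{i_k})\leq CT^{d/2}$ from \eqref{eq:volume_growth} at scale $T^{1/d_w}$, and the relation $d=2d_f/d_w$) against the tilt benefit $\exp(-c\b h\,\DE[L_{\underline i}])$, where a direct calculation with \eqref{eq:gaussianBoundIsoper} gives $\DE[L_{\underline i}]\asymp T$ on a typical diagonal block.

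Optimising with $h\asymp \b T^{d/2-1}$ and $T\asymp\b^{-2/(2-d)}$ balances the two exponents and yields a per-block bound of order $\exp(-c\b^{4/(2-d)}T)$; multiplying over the $N=n/T$ blocks gives \eqref{eq:freeEnergyEstimateLacoin}, and since the rate is strictly negative for every $\b>0$ when $d<2$, this yields $\bar\b_c=0$. The step I expect to be the main obstacle is converting the \emph{random} local time $L_{\underline i}$ in the exponent to something comparable to its mean uniformly in $\underline i$: the upper heat kernel bound alone is not enough to obtain a matching lower bound on the within-block occupation, so one must run either a conditional second moment argument or a resampling/Jensen step, and use \eqref{eq:unifDim} carefully to absorb the atypical coarse paths whose contribution cannot be decoupled this way.
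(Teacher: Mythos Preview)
Your approach is essentially the paper's: fractional moment, coarse-graining at a $\beta$-dependent time scale, change of measure along a tube around the coarse path, and summation over coarse paths controlled by the heat-kernel tail against the annular count from Assumption~\ref{ass:covering}. Two points deserve correction.

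\medskip
\textbf{The ``main obstacle'' dissolves if you tilt on a larger set.} You tilt on $\Gamma_{\underline i}=\bigcup_k I_k\times A_{i_k}$ and then worry about lower-bounding the random occupation time $L_{\underline i}(S)$ inside these cells. The paper instead tilts on the \emph{thickened} tube
\[
J_k=\bigl\{(t,y): t\in I_k,\ d(y,A_{i_{k-1}})\leq C_1 T^{1/d_w}\bigr\},
\]
with an extra free parameter $C_1$. On the block $I_k$, conditional on $S_{(k-1)T}\in A_{i_{k-1}}$, Markov's inequality together with the \emph{upper} heat-kernel bound \eqref{eq:gaussianBoundIsoper} and the volume growth \eqref{eq:volume_growth} gives
\[
\DP\Bigl(\sum_{t\in I_k}\mathbf 1_{d(S_t,A_{i_{k-1}})>C_1T^{1/d_w}}\geq T/2\Bigr)\leq F(C_1),
\]
with $F(C_1)\to 0$ as $C_1\to\infty$, exactly as in the proof of Theorem~\ref{th:isoper}(i). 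No lower heat-kernel bound, second-moment argument or resampling is needed: by choosing $C_1$ large you force the walk to spend at least $T/2$ steps in the tilted region with probability as close to $1$ as you wish, and the benefit factor becomes a deterministic $e^{-\frac12\Lambda'(\beta)\delta_T T}$ on the complementary event.

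\medskip
\textbf{The optimisation scale is $T\asymp\beta^{-4/(2-d)}$, not $\beta^{-2/(2-d)}$.} With the thickened tube, the number of tilted sites per block is $\asymp T\cdot T^{d_f/d_w}=T^{1+d/2}$, so taking $h=\delta_T\asymp T^{-(2+d)/4}$ makes the entropy cost $h^2T^{1+d/2}\asymp 1$ per block. The benefit per block is then $\Lambda'(\beta)\,\delta_T\,T\asymp \beta\,T^{(2-d)/4}$, and one needs this to exceed a fixed constant, which forces $T\asymp \beta^{-4/(2-d)}$. Each of the $n/T$ blocks then contributes a factor $e^{-\alpha}$ with $\alpha>0$ independent of $\beta$, giving
\[
\frac{1}{n}\log\IE W_n^\theta\lesssim -\frac{\alpha}{T}\asymp -\beta^{4/(2-d)},
\]
which is \eqref{eq:freeEnergyEstimateLacoin}. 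Your choice $h\asymp\beta T^{d/2-1}$, $T\asymp\beta^{-2/(2-d)}$ does not balance entropy and benefit, and your stated per-block bound $\exp(-c\beta^{4/(2-d)}T)$ tends to $1$ as $\beta\to 0$ under your scaling, so the product over blocks does not decay.
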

\begin{remark}
  \label{rem-Lacoin}
  The critical
  case $d=2$ is not covered by Theorems \ref{th:isoper} and
  \ref{th:VSDVG}. For $\mathbb Z^d$/SRW, the conclusion holds by
  \cite{Lacoin10}. Unlike the proof  of Theorems
  \ref{th:isoper} and \ref{th:VSDVG} for $d<2$,
  the proof for $d=2$ in \cite{Lacoin10} uses a change of measure
  that introduces correlations into the environment.
  We believe that the argument carries over to our setup, but we have not verified all details.
\end{remark}


We close this section by mentioning a result of Birkner \cite{Bir04}
whose proof
carries over without changes to our general framework.
Let
\begin{equation}
  \label{eq-lambda2}
  \Lambda_2(\b) := \Lambda(2\b)-2\Lambda(\b).
\end{equation}
\begin{theorem}[\cite{Bir04}]\label{th:Birkner} Let $x\in V$. Let $S,S'$
  be two independent copies of $S$ started at $x$, and
  let $\mathcal F_S$ denote the $\sigma$-algebra generated by $(S_k)$. If
\begin{equation} \label{eq:BirknerCondition}
 \DE_x^{\otimes 2}\left[ e^{\Lambda_2(\b) \sum_{k=1}^\infty
 \mathbf{1}_{S_k = S_k'}} \middle| \mathcal F_S\right] < \infty,\quad \mbox{\rm a.s.},
\end{equation}
then
$(W_n(x,\b))_n$ is uniformly integrable. In particular, \eqref{eq:BirknerCondition}
implies that $W_\infty(x,\b) >0$ a.s.
\end{theorem}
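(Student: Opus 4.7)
The plan is to carry out Birkner's size-biasing argument in this general setting. For each $n$, I would introduce an auxiliary probability measure $\IPtilde_n$ on the product of environments and paths by setting
\[
\frac{\dd \IPtilde_n}{\dd(\IP \otimes \DP_x)} = \prod_{i=1}^n e^{\b\, \om(i, S_i) - \Lambda(\b)}.
\]
Under $\IPtilde_n$ the $S$-marginal is still $\DP_x$, and conditionally on $S$ the field $\om$ remains independent across space--time points, retaining its original law off the trajectory of $S$ and being exponentially tilted by $e^{\b u - \Lambda(\b)}$ at each point $(i, S_i)$ with $i\le n$.

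Next I would reduce uniform integrability of $(W_n(x,\b))_n$ to tightness of $W_n$ under $\IPtilde_n$. Since $\IE W_n = 1$, the size-biasing identity gives
\[
\IE\bigl[W_n \mathbf{1}_{W_n \ge K}\bigr] = \IPtilde_n(W_n \ge K),
\]
so uniform integrability is equivalent to $\lim_{K\to\infty}\sup_n \IPtilde_n(W_n\ge K) = 0$.

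The heart of the argument is the computation of $\IEtilde_n[W_n\mid \mathcal F_S]$. Writing $W_n = \DE^{S'}_x\!\bigl[\prod_{i=1}^n e^{\b \om(i, S_i') - \Lambda(\b)}\bigr]$ with an independent copy $S'$ and using the tilted product structure of $\om$ under $\IPtilde_n(\cdot\mid S)$, each factor has conditional mean $1$ when $S_i' \ne S_i$ and $e^{\Lambda(2\b)-2\Lambda(\b)} = e^{\Lambda_2(\b)}$ when $S_i' = S_i$. Multiplying the independent factors yields
\[
\IEtilde_n[W_n \mid \mathcal F_S] = \DE_x^{\otimes 2}\!\left[ e^{\Lambda_2(\b)\sum_{i=1}^n \mathbf{1}_{S_i = S_i'}} \,\middle|\, \mathcal F_S\right],
\]
which is nondecreasing in $n$ and, by monotone convergence combined with hypothesis \eqref{eq:BirknerCondition}, converges to a $\DP_x$-a.s.\ finite random variable $M(S)$.

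To finish, for every $\varepsilon>0$ I would choose $M_0$ with $\DP_x(M(S) > M_0) < \varepsilon/2$ and apply a conditional Markov inequality: since the $S$-marginal of $\IPtilde_n$ is $\DP_x$,
\[
\IPtilde_n(W_n \ge K) \le \DP_x(M(S) > M_0) + \tfrac{1}{K}\DE_x\!\left[M(S)\, \mathbf{1}_{M(S)\le M_0}\right] \le \tfrac{\varepsilon}{2} + \tfrac{M_0}{K},
\]
which is at most $\varepsilon$ for $K$ large, uniformly in $n$. This gives tightness and hence uniform integrability of $(W_n(x,\b))_n$. In particular $\IE W_\infty(x,\b) = 1$, so $W_\infty(x,\b)$ is not a.s.\ zero, and Proposition \ref{prop:StrongWeakDisorderIntro} then forces $W_\infty(x,\b) > 0$ a.s. The only step that requires genuine care in the graph-general setting is this last passage from the conditional a.s.\ bound $\sup_n \IEtilde_n[W_n\mid \mathcal F_S]<\infty$ to unconditional tightness, and it is handled cleanly by the truncation above; all other steps depend on $(G,S)$ only through the product structure of the environment, so Birkner's proof transfers without further modification.
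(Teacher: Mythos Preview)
Your proposal is correct and is precisely Birkner's size-biasing argument. The paper itself does not supply a proof of this theorem; it simply cites \cite{Bir04} and remarks that the proof ``carries over without changes to our general framework,'' so your write-up is in fact a faithful reconstruction of the very argument the paper is deferring to.
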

Theorem \ref{th:Birkner} was used in the proof that
$\b_2 < \b_c$ for the $\mathbb Z^d$/SRW polymer,
when $d\geq 3$ (see Remark \ref{rem-birkner} above).
In our context,
it will be used in Section \ref{sec:transGW}
when showing that $\b_c >0$  for a transient Galton-Watson model
which satisfies $\b_2=0$.

\subsection{Uniform integrability of the partition function} \label{sec:UIintro}
Since $\IE[W_n(x,\b)]=1$, it is immediate that whenever $(W_n(x,\b))_n$ is uniformly integrable for some $x\in V$, then weak
disorder holds. In what follows, we study the converse
implication and provide
some conditions on $(S_k)_{k\geq 0}$ under which $(W_n(x,\b))_n$ is uniformly integrable in the entire weak disorder region.
\begin{remark}
The converse is not always true: Proposition \ref{prop:counterExUI} below provides an example for which $\b_c>0$ but $(W_n(x,\b))_n$ is not uniformly integrable in the whole weak disorder region. 
\end{remark}

 We begin with an observation. We say that $h:V\to\mathbb R$ is an \emph{harmonic function} on $G$ whenever
\begin{equation} \label{eq:harmonic}
\Delta h(x) = \sum_{y\sim x} P(x,y) (h(y)-h(x)).
\end{equation}
\begin{proposition} 
\label{prop:UIcond}
\begin{enumerate}
\item  $h(x)= \IE[W_\infty(x,\b)]$ defines a bounded harmonic function on $G$. \item The following properties are equivalent:
\begin{enumerate}[label=(\roman*)]
\item \label{UIi} $(W_n(x,\b))_{n}$ is uniformly integrable {for some $x\in V$,}
\item \label{UIii} $\IE[W_\infty(x,\b)] = 1$ {for some $x\in V$,}
\item \label{UIiii} $\inf_{x\in V} \IE[W_\infty(x,\b)] > 0$.
\end{enumerate}
\item  If one of the above properties is satisfied then for all $x\in V$, $(W_n(x,\b))_{n}$ is uniformly integrable and $\IE[W_\infty(x)]=1$ .
\end{enumerate}
\end{proposition}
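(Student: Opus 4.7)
The plan is to handle the three parts in sequence, with the main work concentrating on the cyclic implications in part (2). For part (1), I would expand the partition function by conditioning on the first step:
\[W_{n+1}(x) = \sum_{y} P(x,y)\, e^{\beta \omega(1,y)-\Lambda(\beta)}\, \widetilde W_n(y),\]
where $\widetilde W_n(y)$ is the partition function at $y$ built from the time-shifted environment $(\omega(i+1,\cdot))_{i\geq 1}$. Local finiteness ($\bar d_x<\infty$) makes the sum finite, so taking the almost sure limit $n\to\infty$ yields a pathwise identity for $W_\infty(x)$. Taking $\IE$ and using that $\widetilde W_\infty(y)\stackrel{d}{=}W_\infty(y)$ is independent of $\omega(1,\cdot)$, together with $\IE[e^{\beta\omega(1,y)-\Lambda(\beta)}]=1$, produces the harmonicity $h(x)=\sum_y P(x,y)h(y)$. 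Boundedness $0\leq h\leq 1$ is immediate from nonnegativity and Fatou's lemma applied to the mean-one martingale $W_n(x)$.

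For part (2), the direction (i)$\Rightarrow$(ii) at the same $x$ is the textbook fact that a nonnegative, a.s.\ convergent martingale is uniformly integrable iff its limit has the same expectation. For (ii)$\Rightarrow$(iii), I would invoke a minimum principle: $1-h\geq 0$ is a nonnegative harmonic function, and if $h(x_0)=1$ then the identity $0=\sum_y P(x_0,y)(1-h(y))$ forces $h(y)=1$ at every neighbor $y$ of $x_0$; iterating and using irreducibility of $S$ gives $h\equiv 1$, in particular $\inf_V h=1>0$.

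The principal obstacle is (iii)$\Rightarrow$(i), for which I would derive a pathwise identity for the Doob martingale $M^{(x)}_n:=\IE[W_\infty(x)\mid \mathcal G_n]$. Iterating the first-step decomposition $n$ times, and using that the set of vertices reachable from $x$ in $n$ steps is finite by local finiteness, gives
\[W_\infty(x)=\DE_x\!\left[e^{\beta\sum_{i=1}^n\omega(i,S_i)-n\Lambda(\beta)}\,\widetilde W^{(n)}_\infty(S_n)\right],\]
where $\widetilde W^{(n)}_\infty(y)$ is constructed from the environment strictly after time $n$, hence is independent of $\mathcal G_n$ with mean $h(y)$. Tonelli then gives
\[M^{(x)}_n=\DE_x\!\left[e^{\beta\sum_{i=1}^n\omega(i,S_i)-n\Lambda(\beta)}\,h(S_n)\right].\]
Under (iii), bounding $h(S_n)\geq c>0$ inside the expectation yields $W_n(x)\leq c^{-1}M^{(x)}_n$; since Doob martingales are uniformly integrable, domination finishes the proof of (i) simultaneously for every $x\in V$. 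Part (3) is then a bookkeeping consequence: any of (i), (ii), (iii) triggers the cycle (i) at some $x$ $\Rightarrow$ (ii) at the same $x$ $\Rightarrow$ (iii) $\Rightarrow$ (i) at every $x$, which in turn forces $\IE[W_\infty(y)]=\lim_n\IE[W_n(y)]=1$ uniformly in $y\in V$.
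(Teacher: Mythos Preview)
Your proof is correct and follows essentially the same strategy as the paper: harmonicity via the one-step decomposition and Fatou for boundedness, the maximum principle for (ii)$\Rightarrow$(iii), and for (iii)$\Rightarrow$(i) the identity $\IE[W_\infty(x)\mid\mathcal G_n]=\DE_x[e_n\,h(S_n)]$ (which the paper obtains directly from its equation for $W_\infty(x)$ in terms of the shifted environment) combined with the lower bound $h\geq c$ to dominate $W_n(x)$ by a uniformly integrable Doob martingale. The only cosmetic difference is that you reconstruct the $n$-step identity by iterating the one-step decomposition, whereas the paper has already recorded it earlier in the form $W_\infty(x)=\sum_{y\in R_n(x)}\DE_x[e_n\mathbf 1_{S_n=y}]\,W_\infty(y)\circ\eta_n$.
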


\begin{corollary} \label{cor:Liouville}
Assume that $(G,P)$ satisfies the Liouville property, i.e. that
all bounded harmonic functions are constant. Then, $(W_n(x,\b))_{n}$
is uniformly integrable for all $x\in V$ in the whole weak disorder region.
\end{corollary}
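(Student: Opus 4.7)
The proof is essentially a direct chaining of Proposition \ref{prop:UIcond} with the Liouville hypothesis. The plan is to show that under the Liouville property, the bounded harmonic function produced in part (1) of Proposition \ref{prop:UIcond} must be strictly positive throughout the weak disorder region, which then triggers the equivalences in part (2).

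First, I would invoke Proposition \ref{prop:UIcond}(1): the function $h(x) := \IE[W_\infty(x,\b)]$ is a bounded harmonic function on $G$ (bounded because $W_\infty(x,\b)\geq 0$ and, by Fatou, $\IE[W_\infty(x,\b)] \leq \liminf_n \IE[W_n(x,\b)] = 1$). The Liouville hypothesis then forces $h$ to be constant: there exists $c\in[0,1]$ with $\IE[W_\infty(x,\b)]=c$ for every $x\in V$.

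Next, I would observe that in the weak disorder region, by definition $W_\infty(x,\b)>0$ $\IP$-a.s.\ for every $x\in V$. Since $W_\infty(x,\b)$ is a nonnegative random variable that is almost surely positive, its expectation is strictly positive, so $c>0$. In particular $\inf_{x\in V}\IE[W_\infty(x,\b)]=c>0$, which is condition \ref{UIiii} of Proposition \ref{prop:UIcond}(2).

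Finally, the equivalence of \ref{UIi}, \ref{UIii}, \ref{UIiii} in Proposition \ref{prop:UIcond}(2), together with part (3) of that proposition, yields that $(W_n(x,\b))_n$ is uniformly integrable for every $x\in V$. There is no real obstacle here: everything is packaged in Proposition \ref{prop:UIcond}, and the Liouville property serves only to promote a pointwise positivity (guaranteed by weak disorder at a single base point) to a uniform positive lower bound on $h$.
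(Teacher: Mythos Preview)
Your proof is correct and follows essentially the same route as the paper: use Proposition~\ref{prop:UIcond}(1) plus the Liouville property to conclude that $h(x)=\IE[W_\infty(x,\b)]$ is constant, note this constant is positive in the weak disorder region, and then invoke condition~\ref{UIiii} and part~(3) of Proposition~\ref{prop:UIcond} to obtain uniform integrability for every $x$. The paper's argument is just a terser version of exactly this.
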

In what follows, given a graph $G$ and a vertex $v\in V$, we call the
pair $(v,G)$ a \textit{rooted graph}. We say that two rooted graphs
  $(v,G)$ and $(v',G')$
  are isomorphic if there exists a graph isomorphism $\pi$ so that
$v'=\pi(v)$ and $G'=\pi(G)$.
\begin{corollary} \label{cor:UIbyTranslation}
Suppose there is a finite set of vertices $V_0\subset V$ such that for all $x\in V$, the rooted graph
  $(x,G)$ is isomorphic to one of the rooted graphs $\{(v,G)\}_{v\in V_0}$. Then, if weak disorder holds, $(W_n(x))_{n}$ is uniformly integrable for all $x\in V$.
\end{corollary}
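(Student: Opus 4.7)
The plan is to verify condition \ref{UIiii} of Proposition \ref{prop:UIcond}, namely that $\inf_{x\in V} \IE[W_\infty(x,\beta)] > 0$, and then invoke the equivalence given there.

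First, I would use the hypothesis to show that the law of $W_\infty(x,\beta)$ depends only on the isomorphism class of the rooted graph $(x,G)$. Given $x \in V$, choose $v=v(x) \in V_0$ and a rooted graph isomorphism $\pi$ sending $(x,G)$ to $(v,G)$, interpreting "isomorphism" as also preserving the transition kernel, i.e.\ $P(\pi(y),\pi(z))=P(y,z)$ for all $y,z$ (this is automatic for SRW since $\pi$ preserves degrees). Setting $\tilde{\omega}(i,y):=\omega(i,\pi(y))$, the change of variables $S \mapsto \pi(S)$ pushes $\DP_x$ to $\DP_v$, and gives
\begin{equation*}
W_n(x,\beta,\tilde{\omega}) \;=\; \DE_x\bigl[e^{\beta \sum_{i=1}^n \omega(i,\pi(S_i))}\bigr]\,e^{-n\Lambda(\beta)} \;=\; W_n(v,\beta,\omega).
\end{equation*}
Since the environment $(\omega(i,y))_{i,y}$ is i.i.d., $\tilde{\omega} \eqlaw \omega$ as random fields, whence $W_\infty(x,\beta) \eqlaw W_\infty(v,\beta)$; in particular $\IE[W_\infty(x,\beta)] = \IE[W_\infty(v,\beta)]$.

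Next I would combine weak disorder with the finiteness of $V_0$. By Proposition \ref{prop:StrongWeakDisorderIntro}, $W_\infty(v,\beta) > 0$ almost surely for each $v\in V$, and since it is the a.s.\ limit of a nonnegative mean-one martingale, Fatou's lemma yields $\IE[W_\infty(v,\beta)] \in (0,1]$. Finiteness of $V_0$ then provides $\delta := \min_{v\in V_0} \IE[W_\infty(v,\beta)] > 0$, and by the distributional identity of the previous step $\IE[W_\infty(x,\beta)] \geq \delta$ for every $x \in V$. Proposition \ref{prop:UIcond} then delivers uniform integrability of $(W_n(x,\beta))_n$ for every $x \in V$.

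I do not expect any real obstacle. The only delicate point is the implicit convention that a rooted graph isomorphism in the polymer context also preserves the transition matrix; this is automatic in the canonical case of SRW and should be read as part of the hypothesis in the general case. Everything else is a direct combination of the $0$--$1$ law of Proposition \ref{prop:StrongWeakDisorderIntro} with the equivalences already packaged in Proposition \ref{prop:UIcond}.
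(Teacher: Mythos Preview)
Your proof is correct and follows essentially the same approach as the paper: both arguments observe that $h(x)=\IE[W_\infty(x)]$ takes only finitely many values (one for each isomorphism class in $V_0$), that all of these values are positive under weak disorder, and then invoke condition \ref{UIiii} of Proposition \ref{prop:UIcond}. Your version simply spells out the distributional identity $W_\infty(x)\eqlaw W_\infty(v(x))$ via the change of variables under the rooted isomorphism, which the paper leaves implicit; your remark that the isomorphism should be taken to preserve the transition kernel is a fair clarification of the intended hypothesis.
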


\section{Specific graphs}
\label{sec-spec}
We introduce in this short section
three models, which will be used to illustrate various
phenomena. These are respectively SRW on the lattice infinite
bond percolation cluster, the $\lambda$-biased random walk on a Galton--Watson tree, and the canopy tree.
\subsection{Super-critical percolation cluster on $\mathbb Z^d$}
\label{subsec-perco}
To each edge $(x,y)$
of the lattice $\mathbb Z^d$ we associate a
Bernoulli random variable $a_{xy}$ such that the edge is open (i.e.\ $a_{xy}=1$) with probability $p$. It is well known, see e.g. \cite{grimmett},
that
for $d\geq 2$ there exists a
critical parameter $p_c=p_c(\mathbb Z^d) \in (0,1)$, such that for
the super-critical regime $p>p_c$,
there exists almost-surely
a unique infinite connected cluster denoted by $\mathcal C_\infty$.

The SRW on the super-critical infinite cluster shares properties similar
to the SRW on $\mathbb Z^d$; indeed,
the walk on $\mathcal C_\infty$ is transient when $d\geq 3$ and recurrent when $d=2$,
almost surely \cite{GKZ93}. The SRW further
satisfies almost surely  a central limit theorem and  a local limit theorem
\cite{barlowperc}. We will however see that the polymer measure on the
percolation model is quite different, and in particular, see Theorem
\ref{th:noL2perco} below, does not possess an
$L^2$ regime.

\subsection{$\lambda$-biased
random walk on Galton-Watson trees} \label{sec:GWtree}
Let $\mathcal T$ be a rooted (at a vertex $o$) Galton-Watson tree (conditioned on survival)
with offspring distribution $\{p_k\}$,
having mean $m=\sum k p_k>1$. The parent of $x\in V$ is the neighbor of $x$
on the geodesic connecting $x$ to the root. All other neighbors of $x$ are
called descendents. Given a real $\lambda\geq 0$, we let $(\DP,(S_k))$ denote
the $\lambda$-biased random walk on $\mathcal T$, with transition probability
$P(x,y)=\lambda/(\lambda+d_x)$ if $y$ is the parent of $x$ and
$P(x,y)=1/(d_x+\lambda)$ otherwise.
Lyons  \cite{lyons1990random} proved that the walk is transient if $m>\lambda$,
null recurrent if $m=\lambda$ and positive recurrent if $m<\lambda$. Let
$|S_k|$
denote the distance of $S_k$ from the root. Law of
large numbers for $|S_n|/n$, based on appropriate regeneration structures,
were derived in \cite{LyonsPemantlePeresBiased} (for the transient case).
These were completed by large deviation principles in
\cite{dembo2002large}, and by central limit theorems (for the transient $m>\lambda>0$ and null-recurrent $m=\lambda>0$ cases) in  \cite{peres2008central}.
Note that the model of Bernoulli
percolation on a Galton-Watson tree is a particular case of this model. Note also that the case $\lambda=0$ corresponds to the model of Branching random walk, and
$W_n(o)$ is then the  Biggins martingale. Much is known about $W_n(o)$ and its limits, see \cite{Shi-StFl}.

Our results for polymers with the biased random walk on Galton-Watson trees are presented in Sections \ref{sec:posRecGW} and \ref{sec:transGW}.

\subsection{The canopy tree}
 \label{sec:canopyTree}
The canopy tree
$\mathtt T$
is the infinite volume limit of a finite $d+1$-regular tree seen from its bottom boundary \cite{AiWa06Canopy}. It is constructed
as follows. At the ground level $\ell=0$, put a countable number of vertices and attach to every successive pack of $d$ vertices one parent at level $\ell=1$. Do the same recursively at the higher $\ell$ levels (see Figure \ref{fig:canopy} for a pictorial representation).

The  $\lambda$-biased random walk
(with bias toward the parents) is constructed similarly to
section \ref{sec:GWtree}. Namely, when at vertex $v$ at level $l>0$,
the jump probability toward the parent is $\lambda/(\lambda+d)$ while the probability to jump to any other neighbor is $1/(\lambda+d)$. When at vertex $v$ at level $0$, the jump probability toward the parent of $v$ is $1$.
It follows from the description that the $\lambda$-biased
walk can be represented  in terms of a conductance model (see \cite{LPbook})
with the conductance on edges
between levels $\ell$ and $\ell+1$ equal to $\lambda^\ell$.
From this representation it follows at once that the
$\lambda$-biased walk is recurrent if $\lambda < 1$, null recurrent if $\lambda = 1$ and transient when $\lambda > 1$. 

 We show in Section \ref{sec:canopyProof} that $\b_2>0$ for the polymer on $(\mathtt T,(S_k))$ whenever $d>\lambda > 1$.
\begin{figure}
  \includegraphics[width=\linewidth]{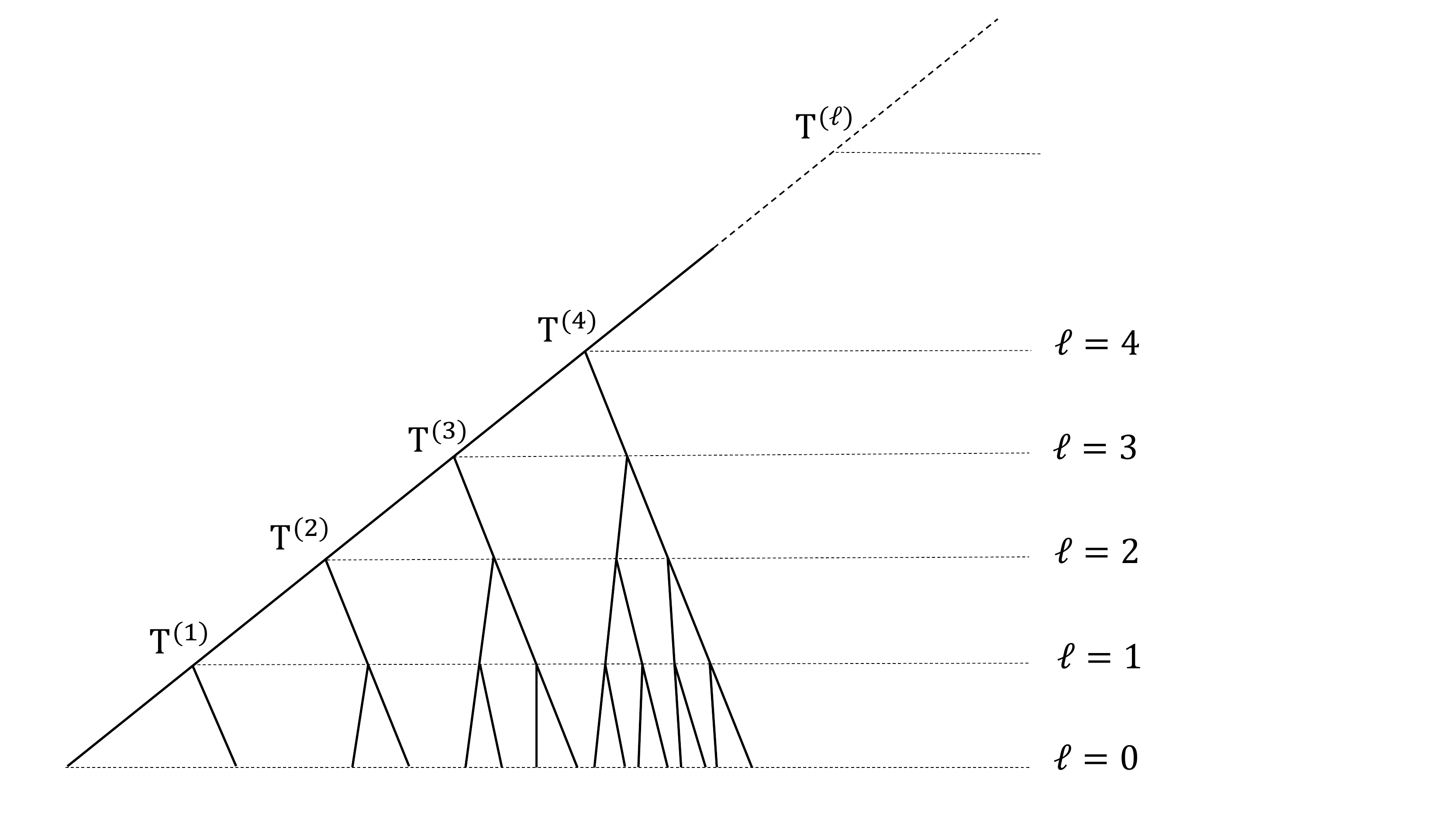}
  \caption{The canopy tree of parameter $d=2$. The
  subtrees $\mathtt T^{(\ell)}$ are used in the
  proof of Theorem \ref{th:L2regionCanopy}}
  \label{fig:canopy}
\end{figure}

\section{Weak, strong and very strong disorder}
\label{sec-3}
We provide in this section proofs for our main results. 
\subsection{Proof of Proposition \ref{prop:StrongWeakDisorderIntro}}
\begin{proof}[Proof of Proposition \ref{prop:StrongWeakDisorderIntro}]
The proof mostly follows  classical techniques of directed polymers, see e.g.\ \cite{CStFlour}.
Let us first introduce the short-notation $e_n := e^{\b \sum_{i=1}^n \omega(i,S_i)-n\Lambda(\b)}$ such that $W_n(x) = \DE^x_n[e_n]$ that we will use repeatedly.
Let $n,m\in \mathbb N$ and denote by $\eta_n$ the shift in time of $n$ steps in the environment $\omega(i,y)$, and observe that by Markov's property,
\begin{align}
 W_{n+m}(x) &= \DE_x \left[ e_n \, e^{\b \sum_{i=n+1}^{n+m} \omega(i,S_i)- m\Lambda(\b)}\right]\nonumber\\
 & = \DE_x\left[e_n\, W_m(S_n)\circ \eta_n \right] =  \sum_{y\in R_n(x)} \DE_x\left[e_n \mathbf{1}_{S_n=y}\right]  W_m(y)\circ \eta_n,\label{eq:finiteMarkovWn}
\end{align}
where $R_n(x)= \{y\in V, p_n(x,y)>0\}$. As $R_n(x)$ is finite by assumption, we obtain after taking the limit $m\to\infty$ that
\begin{equation} \label{eq:WinftySelfEq}
 W_\infty (x) =  \sum_{y\in R_n(x)} \DE_x\left[e_n \mathbf{1}_{S_n=y}\right] W_\infty(y)\circ \eta_n.
\end{equation}
Since all the $\DE_x\left[e_n \mathbf{1}_{S_n=y}\right],y\in R_n(x)$ are almost surely positive,
\begin{equation} \label{eq:01lawproof}
\{W_\infty(x) = 0\} = \{\forall y\in R_n(x), \ W_\infty(y)\circ \eta_n = 0  \},
\end{equation}
hence $\{W_\infty(x) = 0\}$ is measurable with respect to $\mathcal G_n^+ := \sigma(\omega(i,x), i\geq n, x\in V)$ for all $n$ hence it is a tail event, so by Kolmogorov 0-1 law either $W_\infty(x) = 0$ a.s.\ or $W_\infty(x) > 0$ a.s. Now the first statement of Proposition \ref{prop:StrongWeakDisorderIntro} follows from \eqref{eq:01lawproof} and the irreducibility assumption.

We turn to the second part of the proposition, namely the existence of the critical parameter $\b_c$.
Let $\theta\in(0,1)$ and $x\in V$.
We begin by observing that $\IE[W_\infty(x,\b)^\theta]$ is non-increasing in $\b$, for which it is enough to prove that $\b \to \IE[W_n(x,\b)^\theta]$ is non-increasing for all $n$ 
(note that $(W_n(x,\b)^\theta)_n$ is uniformly integrable since $\IE[W_n(x,\b)]=1$). 

Let $H_n(S) = \sum_{i=1}^n \omega(i,S_i)$ and  $e_n(S) = e^{\b H_n(S) - n\Lambda(\b)}$.
 For all $n\geq 0$, we have by Fubini:
\begin{align*}
\frac{\dd}{\dd \b} \IE\left[W_n(x,\b)^\theta\right] & = \theta \DE_x\IE[e_n(S) W_n(x,\b)^{\theta-1} (H_n(S) - n\Lambda(\b)')  ]\\
& = \theta \DE_x \IE^{S,n}\left[W_n(x,\b)^{\theta-1} (H_n(S) - n\Lambda(\b)')\right],
\end{align*}
where $\dd\IP^{S,n} = e_n(S) \dd \IP$. The field $\omega(i,x)$ is still independent under $\IP^{S,n}$, so by the FKG inequality for independent \ random variables \cite{Harris}, we see that since $H_n(S)$ is non-decreasing with respect to the environment while $W_n(x,\beta)^{\theta - 1}$ is non-increasing, we have
\begin{align*}
\frac{\dd}{\dd \b} \IE\left[W_n(x,\b)^\theta\right] & \leq \theta \DE_x\left[ \IE^{S,n}\left[W_n(x,\b)^{\theta-1}\right] \IE^{S,n}[H_n(S) - n\Lambda(\b)']
\right]\\
& = \theta \DE_x\left[ \IE^{S,n}\left[W_n(x,\b)^{\theta-1}\right] \frac{\dd}{\dd \b} \IE [e_n(S)]
\right] = 0,
\end{align*}
where the last equality holds since $\IE [e_n(S)] = 1$.

Now, observe that
\[\b_c(x) := \inf \left\{\b \geq 0 : \IE\left[W_\infty(x,\b)^\theta\right] = 0\right\},\]
does not depend on $x$. Indeed, if $\b_c(x)= \infty$ for some $x$, then by \eqref{eq:01lawproof} we have that  $W_\infty(y,\b)>0$ a.s.\ for all $y$ and all $\b\geq 0$, hence $\b_c(y)=\infty$ for all $y$. Similarly, if $\b_c(x)<\infty$ for some $x$, then all $\b_c(y)$ are finite. In this case, let $x,y\in V$ and $\b>\b_c(x)$, so that $W_\infty(y,\b) = 0$ for all $y \in V$ and thus $\b_c(y)\leq \b_c(x)$; by exchanging the role of $x$ and $y$, $\b_c(y) = \b_c(x)$.

We  conclude the proof  by  checking, similarly to what we just did, that $\b_c:=\b_c(x)$ separates strong disorder from weak disorder.
\end{proof}

\subsection{Uniform integrability: proofs for Section \ref{sec:UIintro}}
 \begin{proof}[Proof of Proposition \ref{prop:UIcond}]
1. Let $h(x) = \IE[W_\infty(x)]$.
By Fatou's lemma, the expectation $\IE[W_\infty(x)]$ is uniformly bounded by $1$.
To see that $h$ is harmonic, take expectation with respect to the environment in \eqref{eq:WinftySelfEq} and let $n=1$.

2.  We begin with the implication \ref{UIi} $\Rightarrow$ \ref{UIii}. If $(W_n(x))_n$ is uniformly integrable, then $W_n(x)$ converges in $L^1$ and so $\IE[W_\infty(x)]=1$ since $\IE[W_n(x)]=1$.

To see that \ref{UIii} implies \ref{UIiii}, suppose that $h(x)=\IE[W_\infty(x)]=1$. Since $h$ is bounded by $1$, this implies that $h$ is a local maximum. By harmonicity, $h$ must be constant equal to 1
which is \ref{UIiii}.

We now turn to \ref{UIiii} $\Rightarrow$ \ref{UIi}. Suppose that $\inf_{y\in V} \IE[W_\infty(y)] > 0$ and let any $x \in V$. By identity \eqref{eq:WinftySelfEq}: (recall the definition of $\mathcal G_k$ in the introduction)
\begin{align*}
\IE\left[ W_\infty(x) \middle| \mathcal G _k\right] & = \sum_{y\in V} \DE_{x}\left[ e_k \mathbf{1}_{S_k = y}\right] \IE[W_\infty(y)]\geq \inf_{y\in V} \IE[W_\infty(y)] W_k(x),
\end{align*}
and since $\IE\left[ W_\infty(x) \middle| \mathcal G _k\right]$ is UI, $(W_n(x))_n$ is also UI.

Point 3 of the proposition follows from the last arguments.
\end{proof}
\begin{proof}[Proof of Corollary \ref{cor:Liouville}]
By  point 1 of  Proposition \ref{prop:UIcond}, $h$ is constant when $(G,P)$ satisfies the Liouville property.
Hence $\inf_x  h(x)$ is non-zero since weak disorder holds and so uniform integrability holds  by point 3 of Proposition \ref{prop:UIcond}.
\end{proof}

\begin{proof}[Proof of Corollary \ref{cor:UIbyTranslation}]
Under the assumption of the corollary the function $h(x)=\IE[W_\infty(x)]$ only takes a finite number of values. Since weak disorder holds, all of them are positive and the statement follows from Proposition \ref{prop:UIcond}.
\end{proof}

We next construct an example of a pair $(G,S)$ where $\b_c>0$ but $W_n(x,\beta)$ is not uniformly integrable for any $\beta>0$ and some $x\in V$. Let $\mathcal{T}_d$ denote the
$d$-ary tree
 rooted at a vertex $o$. Augment $\mathcal{T}_2$  by attaching to $o$ a copy of $\mathbb Z_+$, to created a rooted tree $\mathcal{G}_{\ref{prop:counterExUI}}$,
 see figures \ref{fig:twotrees}.
  \begin{figure}
  \includegraphics[scale=0.3]{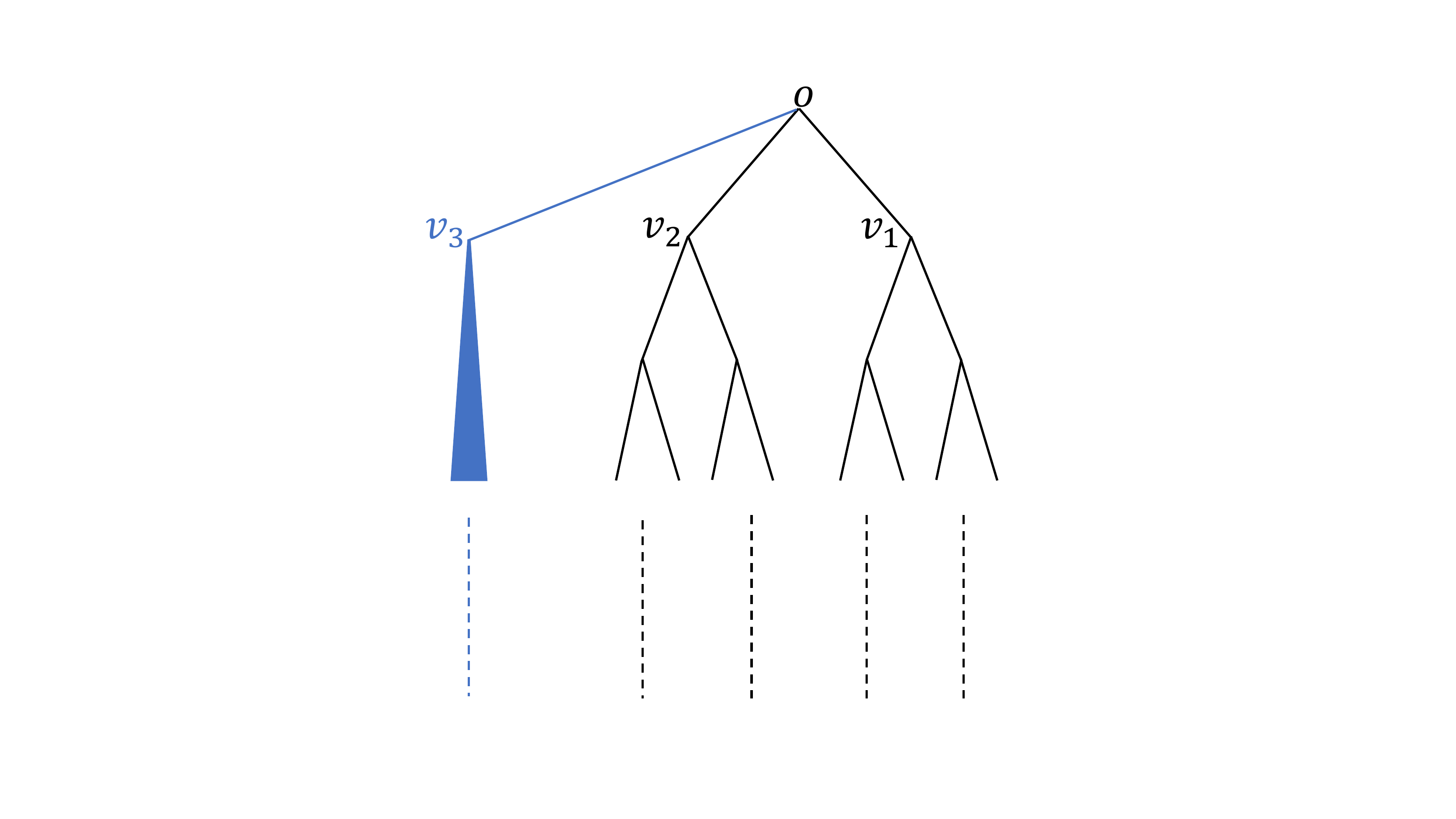}
  \caption{The graph $\mathcal{G}_{\ref{prop:counterExUI}}$, for which $W_n(x,\b)$ is not uniformly integrable for any $\b>0$, but $\b_c>0$. The increasing width of the leftmost  ray represents increasing conductance. }
  \label{fig:twotrees}
\end{figure}
 To define the random walk on $\mathcal{G}_{\ref{prop:counterExUI}}$, assign to each edge $e$ of $\mathcal{T}_2$ the conductace $C_e=1$, while to the $i$-th edge of $\mathbb Z_+$ (measured from the root), assign the conductance $C_i=e^{e^i}$. For $(x,y)\in E$,
write $C_{x,y}$ for the conductance of the edge $(x,y)$, and set $P(x,y)=C_{x,y}/\sum_{z\sim x} C_{x,z}$.
\begin{proposition} \label{prop:counterExUI} The polymer on $\mathcal{G}_{\ref{prop:counterExUI}}$ with i.i.d. bounded
 $\omega(x,i)$
has  $\b_c>0$ while for any $\b>0$,  $W_n(o,\b)$ is not uniformly integrable.
\end{proposition}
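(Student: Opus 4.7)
The plan has two parts. For $\beta_c>0$, I pick a vertex $x\in\mathcal T_2$ at depth $d\ge 1$ from $o$ and work with the escape partition function $M_n(x)=\DE_x[e^{\beta H_n-n\Lambda(\beta)}\mathbf{1}_{\tau_o>n}]$, where $\tau_o=\inf\{k\ge 0:S_k=o\}$. At every non-$o$ tree vertex the transitions of $S$ in $\mathcal G$ coincide with those of SRW on $\mathcal T_2$ (mass $1/3$ to each neighbor), so on $\{\tau_o>n\}$ the law of $(S_0,\ldots,S_n)$ equals the corresponding SRW-on-tree law. A standard second-moment computation gives
\[
\IE[M_n(x)^2]\le \DE^{SRW,\otimes 2}_x[e^{\Lambda_2(\beta) L_\infty}],
\]
with $L_\infty=\sum_{k\ge 0}\mathbf{1}_{S_k=S_k'}$ the number of coincidences of two independent SRWs on $\mathcal T_2$ from $x$. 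The product walk on $\mathcal T_2\times\mathcal T_2$ is transient at its diagonal, making $L_\infty$ geometric with exponential moments, hence the right-hand side is finite for $\beta$ small. Since $M_n$ is a non-negative supermartingale with $\IE[M_n]\to\DP_x(\tau_o=\infty)=1-2^{-d}>0$ (gambler's ruin on the $(2/3,1/3)$-biased depth process), $L^2$-boundedness yields $\IE[M_\infty]>0$. Combined with $W_n\ge M_n$ and the $0$--$1$ law of Proposition~\ref{prop:StrongWeakDisorderIntro}, we get $W_\infty(x)>0$ a.s., so $\beta_c>0$.

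For the non-UI claim, Proposition~\ref{prop:UIcond} tells us that $W_n$ is UI iff $\inf_{x\in V} h(x)>0$, where $h(x)=\IE[W_\infty(x)]$, so it suffices to show $h(r_i)\to 0$ along the ray. Strong Markov at $\tau_o$ decomposes
\[
W_\infty(r_i)=M_\infty(r_i)+\sum_{k\ge 1}\DE_{r_i}\bigl[e^{\beta H_k-k\Lambda}\mathbf{1}_{\tau_o=k}\bigr]\,\tilde W_\infty^{(k)}(o),
\]
where $\tilde W^{(k)}$ uses the shifted, independent environment $\omega(k+\cdot,\cdot)$. Taking $\IE$ and using this independence, the second term has mean $u(i)h(o)$, with $u(i):=\DP_{r_i}(\tau_o<\infty)$ computed by the effective-resistance formula for the birth--death walk on $\{o,r_1,r_2,\ldots\}$ with conductances $C_j=e^{e^j}$: $u(i)=\sum_{j\ge i+1}e^{-e^j}/\sum_{j\ge 1}e^{-e^j}=O(e^{-e^{i+1}})$.

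For $M_\infty(r_i)$, I decompose by the event $B_\infty=\{S_k=r_{i+k},\,\forall k\ge 0\}$ that the walk follows the deterministic forward path; since $B_\infty\subset\{\tau_o=\infty\}$,
\[
M_n(r_i)\le\DP(B_\infty)\,e^{\beta H_n^{\mathrm{det}}-n\Lambda}+Q_n,\qquad Q_n:=\DE[e^{\beta H_n-n\Lambda}\mathbf{1}_{B_\infty^c}],
\]
with $H_n^{\mathrm{det}}=\sum_{k=1}^n\omega(k,r_{i+k})$ an i.i.d.\ mean-zero sum. By the LLN, $(\beta H_n^{\mathrm{det}}-n\Lambda)/n\to -\Lambda(\beta)<0$, so the first summand vanishes a.s. The quantity $Q_n$ is a martingale (since $B_\infty$ is $S$-measurable and $\IE[e^{\beta H_n-n\Lambda}\mid S]=1$) with constant expectation $\DP(B_\infty^c)\le\sum_{k\ge i}(1-p_k)\le\sum_{k\ge i}e^{-e^k(e-1)}=:\epsilon(i)\to 0$; Fatou gives $\IE[Q_\infty]\le\epsilon(i)$, hence $\IE[M_\infty(r_i)]\le\epsilon(i)$. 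Combining, $h(r_i)\le\epsilon(i)+u(i)h(o)\to 0$ as $i\to\infty$.

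The main obstacle is controlling $Q_n$: naive pathwise bounds such as $|\omega|\le M$ yield $e^{n(\beta M-\Lambda)}$-type factors that can blow up in $n$. The resolution is the observation that $\IE[e^{\beta H_n-n\Lambda}\mid S]=1$, which promotes $Q_n$ to a martingale whose constant mean is simply the small walk probability $\DP(B_\infty^c)$, bypassing pathwise environment estimates entirely.
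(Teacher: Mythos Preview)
Your argument for $\beta_c>0$ is correct and matches the paper's: restrict to paths that stay in the binary tree, bound the second moment by that of the tree polymer, and conclude via $L^2$-boundedness. (One small quibble: $L_\infty$ is not literally geometric, since the two walks can separate and re-meet; but it does have an exponential moment by transience of the diagonal and Khas'minskii, which is all you use.)

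For the non-UI part there is a genuine gap. You assert the identity
\[
W_\infty(r_i)=M_\infty(r_i)+\sum_{k\ge 1}\DE_{r_i}\bigl[e_k\mathbf{1}_{\tau_o=k}\bigr]\,\tilde W_\infty^{(k)}(o)
\]
and then take expectations to get $h(r_i)=\IE[M_\infty(r_i)]+u(i)h(o)$. The finite-$n$ decomposition $W_n=M_n+\sum_{k\le n}A_kB_{n,k}$ is fine, but passing to the limit with \emph{equality} is not justified: the sum has a growing number of terms, and Fatou on the sum only gives $W_\infty-M_\infty\ge\sum_k A_k\tilde W_\infty^{(k)}(o)$, i.e.\ $h(r_i)\ge\IE[M_\infty(r_i)]+u(i)h(o)$, which is the wrong direction for your conclusion.

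The fix is immediate using your own ingredients: your inequality $M_n(r_i)\le \DP(B_\infty)e^{\beta H_n^{\mathrm{det}}-n\Lambda}+Q_n$ is in fact an identity for $W_n(r_i)$ itself (the right-hand side equals $\DE_{r_i}[e_n\mathbf 1_{B_\infty}]+\DE_{r_i}[e_n\mathbf 1_{B_\infty^c}]=W_n(r_i)$). Hence $W_\infty(r_i)=Q_\infty$ a.s., and by Fatou $h(r_i)\le \DP_{r_i}(B_\infty^c)\le\epsilon(i)\to 0$. So the $M_\infty/\tau_o$ detour is unnecessary; drop it and the argument is complete.

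By comparison, the paper works entirely at the root $o$: it splits $W_n(o)$ according to whether the walk ever enters the tree, shows the ray contribution $W_n^l\to 0$ a.s.\ (same mechanism as your $B_\infty$ argument), and concludes $\IE[W_\infty(o)]\le \DP_o(\text{enter tree})<1$ directly, without invoking Proposition~\ref{prop:UIcond}. Your route via $h(r_i)\to 0$ and the criterion $\inf_x h(x)>0$ is a legitimate alternative; both exploit the same phenomenon that the strongly drifted ray walk becomes effectively deterministic.
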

\begin{proof}
Let $v_1,v_2$ be the descendents of $o$ that belong to $\mathcal{T}_2$ and let $v_3$ denote the descendent of $o$ that belongs to
$\mathbb Z_+$.
 Let
\[
\mathcal C = \{\text{the walk never visits $v_1\cup v_2$}\}.
\] Clearly, by the transience of the walk on $\mathbb Z_+$ with our conductances,
$\DP(\mathcal C)\in(0,1)$.  Writing $W_n=W_n(o,\beta)$, decompose
$W_n = W_n^{t} + W_n^{l},$
 with $W_n^l = \DE[e_n \mathbf 1_{\mathcal C}] = \DP(\mathcal C) \DE[e_n \mathbf |{\mathcal C}]$.
 Set
 \begin{equation} \label{eq:defEventAn}
 \mathcal{A}_n =\{ |X_{k+1}|=|X_k|+1, \forall k>\sqrt{n}\}.
\end{equation}
 We have that
 $\DP(\mathcal A_n^c|\mathcal C) < e^{-c n}$ for all $n$ large  (since the conditional walk has a conductance representation with increasingly strong drift away from the root).
 We now claim that for all $\beta>0$,
  \begin{equation} \label{eq:vanishWnr}
 W_n^l\to_{n\to\infty} 0,\quad \IP\text{-a.s.}
 \end{equation}
 Indeed,
 by a union bound, using that the $\omega(x,i)$ are bounded,
$$ \sup_{a,b: |a|,|b|\leq n}\sup_{j\leq \sqrt{n}} \sup_{\ell\leq \sqrt{n}}
\sum_{i=1}^{n-\ell}  \omega(i+\ell, (j+i,a,b)) =o(n),$$
and thus we have that
\begin{equation} \label{eq:vanishQuestion1a}
  W_n^l \leq CP(\mathcal{A}_n^c|\mathcal C)e^{\beta c^* n}X_n+ e^{-\Lambda(\beta) (n-\sqrt{n}) +o(n)},
\end{equation}
where $EX_n\leq 1$. Hence, $W_n^l\to_{n\to\infty} 0$, in probability, and hence $\IP$-a.s. since $W_n^l$ is a martingale.
On the other hand,
$W_n^t = \DE[e_n \mathbf 1_{\mathcal C^c}] \geq W_n^{t,*}$ where  $W_n^{t,*} = \DE[e_n \mathbf{1}_{\mathcal D}]$ with
 \[\mathcal D = \{S_1\in \{v_2,v_3\}, S_k\neq o\;  \mbox{\rm for all $k\geq 1$}\}.\]
Note that $W_n^{t,*}$ is a martingale and that
\[ \IE(W_n^{t,*})^2  \leq \IE(W_n^{\mathcal T_2}(o,\beta))^2,\]
where $W_n^{\mathcal T_2}(o,\beta)$ is the normalized partition function for the SRW on $\mathcal T_2$. We now claim that  for $\beta>0$ small enough,
\begin{equation}
  \label{eq-sep26}
  \limsup_{n\to\infty} \IE(W_n^{\mathcal T_2}(o,\b))^2<\infty,
\end{equation}
 see
e.g. \cite[(3.3b)]{DerridaSpohn88} for a similar computation.
 Indeed, letting $S,S'$ denote two independent copies of
 $S$, the SRW on ${\mathcal T}_2$ started at the root,
 and setting $N_\infty =\sum_{i=1}^\infty {\bf 1}_{S_i=S_i'}$,
 it is easy to check that there exists a constant
 $c$ so that $\DE(e^{cN_\infty})<\infty$.
As a consequence,
with $\Lambda_2(\b)$ as in \eqref{eq-lambda2}, we have
that $\Lambda_2(\b)\to_{\beta \to 0} 0$, and
 $\IE(W_n^{\mathcal T_2}(o,\b))^2\leq \DE(e^{\Lambda_2(\beta) N_\infty})<\infty$
 for
 $\beta>0$ small enough, yielding
 \eqref{eq-sep26}. Thus,
we conclude that for small $\b$,
$W_n^{t,*}$ converges a.s.\ and in $L^2$ to a strictly positive limit $W_\infty^{t,*}$ with $\IE W_\infty^{t,*}=
 \DP[\mathcal D] >0$.
Thus on the one hand, $W_\infty = \lim_n W_n$ is, for small $\b$, positive with positive probability, which implies that it is in fact positive a.s., and thus $\b_c>0$. On the other hand,
we have by \eqref{eq:vanishWnr} that for all $\b\in (0,\b_0)$, $W_\infty = W_\infty^t$, where by Fatou's lemma $\IE[W_\infty^t]\leq\DP(\mathcal C)<1$. Therefore, $(W_n)_{n\geq 0}$ cannot be uniformly integrable in $(0,\b_0)$, and hence
for any $\b>0$, since $\IE[W_n]=1$.
 \end{proof}

\subsection{ Recurrence and heat kernel bounds - Proof of Theorems \ref{th:strongDesForStrongRec} and \ref{th:isoper}}
\begin{proof}[Proof of Theorem \ref{th:strongDesForStrongRec}]
  The proof is based on the change of measure technique introduced in
\cite{Lacoin10}. By Assumption \ref{ass:1st} and our convention that
for any $(i,x)$,
$\omega(i,x)$ has zero mean and variance $1$, we obtain that for $\delta>0$,
\begin{equation}
  \label{eq-chom}
  e^{\Lambda(-\delta)}=\IE(e^{-\delta \omega(i,x)})=e^{c_\delta \delta^2/2},\quad
  c_\delta=1+o_\delta(1).
\end{equation}
We fix $x\in V$ and let $W_n=W_n(x)$.
Thanks to positive recurrence and
the ergodic theorem, we can find an $\varepsilon\in (0,1)$
such that under $\DP_x$, the process $(S_k)$ spends at most a fraction
$1-\varepsilon$ of time away from $x$, i.e.
\begin{equation} \label{eq:definitionAnEps}
  \DP_x\left( \sum_{k=1}^n \mathbf{1}_{S_k\neq x} \geq  (1-\varepsilon)n\right) \to_{n\to\infty} 0.
\end{equation}
Now, fix $\delta_n = n^{-1/2}$  and define the measure
$\IPtilde$ by
\begin{equation}
  \label{eq-chom1}
  \frac{d\IPtilde}{d\IP}=\prod_{(i,y)\in \mathcal C} e^{-\delta_n \omega(i,y)-\Lambda(-\delta_n)},
\end{equation}
where  $\mathcal C = [1,n]\times \{x\}$.
 Under $\IPtilde$, the  variables
 $(\omega(i,y))$ are independent, of  mean $-\delta_n  \mathbf{1}_{(i,y)\in \mathcal C} (1+o_n(1))$ and variance $1+o_n(1)$.
 Further, for $(i,y)\in \mathcal C$,
 \begin{align}
   \label{eq-chom2}
   \nonumber
   e^{-\Lambda(\beta)}\IEtilde e^{\beta \omega(i,y)}&=
   e^{\Lambda(\beta-\delta_n)-\Lambda(\beta)-\Lambda(-\delta_n)}=
   e^{-\Lambda'(\beta)\delta_n +O(\delta_n^2)}\\
   &= e^{-\Lambda'(\beta)\delta_n (1+o_n(1))}.
 \end{align}
Then, for any $\alpha\in(0,1)$, H\"older's inequality yields that
\begin{align}
  \label{eq-chom1a}
\IE\left[W_n^\alpha\right] & = \IEtilde\left[ \frac{\dd \IP}{\dd \IPtilde} \times W_n^\alpha \right]
 \leq \IEtilde\left[ \left(\frac{\dd \IP}{\dd \IPtilde}\right)^{\frac{1}{1-\alpha}}\right]^{1-\alpha} \times \IEtilde\left[ W_n \right]^\alpha.
\end{align}
The first term on the right hand side of the last display reads
\[\IE\left[ \left(\frac{\dd \IP}{\dd \IPtilde}\right)^{\frac{\alpha}{1-\alpha}}\right]^{1-\alpha} = e^{\frac{\#\mathcal C}{2}  \frac{\alpha \delta_n^2 }{1-\alpha}(1+o_n(1))}= e^{\frac{\alpha}{1-\alpha}(1+o_n(1))}<\infty.\]
On the other hand, by \eqref{eq-chom2},
\begin{align}
  \nonumber
  \label{eq-chom2a}
\IEtilde\left[ W_n \right] & = \DE_x\left[ e^{-\Lambda'(\beta) \delta_n (1+o_n(1))
\sum_{k=1}^n \mathbf{1}_{S_k=x}}\right]\\
 &\leq  e^{-\varepsilon \Lambda'(\beta) \delta_n(1+o_n(1)) n } + \DP_x\left( \sum\nolimits_{k\leq n} \mathbf{1}_{S_k\neq x} \geq  (1-\varepsilon)n\right).
\end{align}
By our choice of $\delta_n$ and \eqref{eq:definitionAnEps}, we conclude that
$\IEtilde\left[ W_n \right] \to_{n\to\infty} 0$.
%
Putting things together, we find that  $\IE [W_n^\alpha]\to_{n\to\infty} 0$
for all $\b>0$, and since $W_n^\alpha$ is uniformly integrable,
necessarily $W_\infty = 0$ and thus $\b_c=0$.
\end{proof}

\begin{proof}[Proof of Theorem \ref{th:isoper}]
  The proof of point (i) parallels that of Theorem \ref{th:strongDesForStrongRec}, and we use
  similar notation, with the change that now
$\delta_n = C_1^{-d_f/2} n^{-\frac{d_w+d_f}{2d_w}}$ where $C_1>0$ is a parameter to be determined later, and \[
\mathcal C = \left\{(i,y) : i\leq n,\  d(x,y) \leq C_1 n^{1/d_w}\right\}.
\]
Note that $n\delta_n ={ C_1^{-d_f/2}}n^{1/2-d/4}\to_{n\to\infty} \infty$ since $d<2$.
Introduce the measure
$\IPtilde$ as in \eqref{eq-chom1}. Proceed as in
\eqref{eq-chom1a} and, using that $|B(x,n)|\leq C n^{d_f}$
for some $C>0$ and all $n\geq 1$ by the hypotheses,
bound the first term in the right hand side of \eqref{eq-chom1a},
for all $n$ large,
by
\[\IE\left[ \left(\frac{\dd \IP}{\dd \IPtilde}\right)^{\frac{\alpha}{1-\alpha}}\right]^{1-\alpha} = e^{\frac{\#\mathcal C}{2}  \frac{\alpha \delta_n^2 }{1-\alpha}(1+o_n(1))}\leq
e^{\frac{\alpha C}{(1-\alpha)}}.\]
On the other hand, as in \eqref{eq-chom2a}, we have that
\begin{align}
  \label{eq-2709c}
\IEtilde\left[ W_n \right] & = \DE_x\left[ e^{-{\Lambda'(\beta) \delta_n (1+o_n(1))}  \sum_{k=1}^n \mathbf{1}_{d(S_k,x)\leq C_1n^{1/d_w}}}\right]\\
& \leq  e^{-\frac{\Lambda'(\beta)}{2} \delta_n n (1+o_n(1))} + \DP_x\left( \sum\nolimits_{k\leq n} \mathbf{1}_{d(S_k,x)> C_1n^{1/d_w}} \geq  n/2\right),
\nonumber
\end{align}
where the first term of the RHS in the last display
goes to $0$ as $n\to\infty$, while by Markov's inequality and estimate \eqref{eq:gaussianBound}, we find that  the second term is bounded from above by
\begin{align*}
  \frac{2}{ n} \sum_{k=1}^n \sum_{d(y,x)> C_1 n^{1/d_w}} p_k(x,y) & \leq \frac{C}{n}
   \sum_{k=1}^n \sum_{p> C_1 n^{1/d_w}} \mu\left(\mathcal S(x,p)\right)\,
   \frac{ e^{-(\frac{p^{d_w}}{ck})^{\frac{1}{d_w-1}}}}{k^{d/2}} \\
& \leq C F(C_1),
\end{align*}
for some positive $F$ such that $F(x)\to 0$ as $x\to\infty$, where the estimate in the second line holds for $n$ large enough 
and is obtained by Riemann approximation 
using that $\mu\left(\mathcal S(x,p)\right) \leq C_V p^{d_f-1}$ and $d=2d_f/d_w <2$.
We can now fix $C_1$ large enough to make RHS of the last display as small as we wish.

Putting things together, we find that for all $\b>0$, $\IE[W_n^\alpha] \to_{n\to\infty} 0$  and therefore $W_\infty = 0$ a.s.


We turn to the proof of  Theorem \ref{th:isoper} (ii). It is enough to check that condition \eqref{eq:Khas} is verified.  Note that $\DE_{x,x}^{\otimes 2} [ N_\infty(S,S')] = \sum_{n=0}^\infty \sum_{y\in V} p_n(x,y)^2$. By \eqref{eq:gaussianBoundIsoper} and \eqref{eq:volume_growth}, and using the uniform upper bound on $\mu$,
\begin{align*}
\sup_{x\in V} \sum_{y\in V} p_n(x,y)^2 & \leq  n^{-d}\sum_{k=0}^\infty C_V  k^{d_f-1} C e^{-2\left(\frac{k^{d_w}}{cn}\right)^{\frac{1}{d_w-1}}}=O( n^{-d}  n^{d_f/d_w}).
\end{align*}
 The fact that $d_f/d_w = d/2$ with $d>2$ yields \eqref{eq:Khas}.
 \end{proof}
\subsection{Very strong disorder: proof of Propositions \ref{prop:veryStrongDis} and \ref{prop:veryStrongDisAlwaysHolds}, and Theorem \ref{th:VSDVG}} \label{sec:veryStrongDisorder}

\begin{proof}[Proof of Proposition \ref{prop:veryStrongDis}]
We claim that if $\bar p(x)$ in \eqref{eq:veryStrongDisordermain}
satisfies $\bar p(x)<0$
for some $x\in V$, then the same holds for all $x\in V$, i.e.
\eqref{eq:veryStrongDisordermain} holds. Indeed, let $x,y\in V$ and $m\geq 0$ such that $y\in R_m(x)$ where $R_m(x)=\{z\in V, p_m(x,z) > 0\}$.
From \eqref{eq:finiteMarkovWn}, we see that for all $n\geq 0$,
\[W_{n+m}(x) \geq \DE_{x}[e_m \mathbf{1}_{S_m=y}] W_n(y)\circ \eta_{m},\]
hence
\[
\limsup_{n\to\infty} \frac{1}{n} \IE \log W_n(x) \geq \limsup_{n\to\infty} \frac{1}{n} \IE \log W_n(y),
\]
which justifies our claim. We now complete the proof of Proposition \ref{prop:veryStrongDis}.
Let $H_n(S) = \sum_{i=1}^n \omega(i,S_i)$. We have by Fubini:
\begin{align*}
\frac{\dd}{\dd \b} \IE\left[\log W_n(x,\b)\right] & =  \DE^x\IE[e_n(S) W_n(x,\b)^{-1} (H_n(S) - n\Lambda(\b)')  ]
\end{align*}
and following the same arguments as in the proof of Proposition \ref{prop:StrongWeakDisorderIntro}, we find that for all $n\geq 1$, $\b\to \IE \log W_n(x,\b)$ is non-increasing. Therefore $\bar p(x)$ of
\eqref{eq:veryStrongDisordermain} is also non-increasing with $\b$ and this concludes the proof.
Finally, property \eqref{eq:quenchedVerystrong} follows from the concentration inequality in Theorem \ref{th:ConcentrInequality} and the Borel-Cantelli lemma.
\end{proof}

We now turn to the:
\begin{proof}[Proof of Proposition \ref{prop:veryStrongDisAlwaysHolds}]
Let $d>0$ such that the degree of all vertices is bounded by $d$.
Let $I(a)=\sup_{\theta>0}\left\{\theta a -\Lambda(\theta) \right\}$ be the rate function of the $\omega(i,y)$'s. For any nearest-neighboor path of vertices $\mathbf{x}=(x_0,\dots,x_n)$, we note $H_n(\mathbf x) = \sum_{k=1}^n \omega(k,x_k)$. Let $a>0$ and consider
\begin{align}
\label{eq-corona1}
\sum_{n\geq 0} \IP \left(\sup_{\mathbf{x} = x_0,x_1,\dots,x_n} H_n(\mathbf x) > na\right)
& \leq \sum_{n\geq 0}  \sum_{\mathbf x = x_0,x_1,\dots,x_n} \IP \left( H_n(\mathbf x) > n a\right) \nonumber\\
& \leq C\sum_{n\geq 0}  d^n e^{-n I(a)}.
\end{align}
 Since $I(a)\to\infty$ as $a\to\infty$, we can choose $a$ such that the last sum converges. Then by Borel-Cantelli's lemma, $\IP$-almost surely for $n$ large enough we have $H_n(\mathbf x) \leq na$  for every path $\mathbf x$, so that
\[\limsup_{n\to\infty} \frac{1}{n}\log \DE_{x_0}\left[e^{\sum_{k=1}^n \b \omega(k,S_k)}\right] \leq \b a.\]
Since the support of $\omega$ is unbounded, we have $\Lambda(\b) \gg \b$ as $\b\to\infty$, therefore $\limsup n^{-1} \log W_n(x_0) < 0$ a.s.\ for $\b$ large enough. The proof is concluded via \eqref{eq:quenchedVerystrong}.
\end{proof}

 Proposition \ref{prop:veryStrongDis} provides
uniform bounds on the decay exponent.
\begin{proof}[Proof of Theorem \ref{th:VSDVG}]
  The proof parallels that of Theorem \ref{th:isoper}.
  By  Proposition  \ref{prop:veryStrongDis},
 the limsup in \eqref{eq:freeEnergyEstimateLacoin}
 does not depend on the starting point $x_0$.
Fix $x_0$ as in Assumption \ref{ass:covering} and let $n$ be large enough so that $(A_i)_{i\in I}$ is a covering of $B(x_0,nm)$.  We have
\begin{equation}
W_{nm}(x_0) \leq \sum_{i_1,\dots,i_m\in I} \hat W_{n,m}(i_1,\dots,i_m),
\end{equation}
where
\[\hat W_{n,m}(i_1,\dots,i_m) = \DE_{x_0}\left[e_{nm} \prod_{p=1}^m \mathbf{1}_{S_{np}\in A_{i_p}}\right].\]
By the formula $(a+b)^{\theta} \leq a^{\theta} + b^{\theta}$ for $a,b\geq 0$ and $\theta\in(0,1)$, we obtain that
\begin{equation} \label{eq:coarse}
\IE\left[W_{nm}^\theta\right] \leq \sum_{i_1,\dots,i_m\in I} \IE\left[\hat W_{n,m}(i_1,\dots,i_m)^\theta\right].
\end{equation}
For $i_1,\dots,i_m\in I$, let $J=J_0 \cup \dots \cup J_m$ with \[J_p = \{(k,x) \in (pn,(p+1)n]\times V: d(x,A_{i_p}) \leq C_1 n^{1/d_w}\}.\]
Set
$\delta_n = C_1^{-d_f/2} n^{-\frac{d_w+d_f}{2d_w}}$ . We have
\begin{align*}
&\IE\left[\hat W_{n,m}(i_1,\dots,i_m)^\theta\right] \\
&\leq  e^{\frac{\# J}{2}  \frac{\theta \delta_n^2 }{1-\theta}(1+o_n(1))} \DE_{x_0}\left[e^{-{\Lambda'(\beta) \delta_n (1+o_n(1))} \sum_{k=1}^{mn} \mathbf{1}_{(k,S_k) \in J}} \prod_{p=1}^m \mathbf{1}_{S_{np}\in A_{i_p}} \right]^{\theta}.
\end{align*}
Since $\inf \mu > 0$, by \eqref{eq:volume_growth},
for $n$ large enough the first factor on the right-hand side is bounded by
\begin{equation*} \label{eq:bound_changeOfmeas}
e^{mnC_V(C_1 n^{1/d_w})^{d_f} {\theta \delta_n^2 }/{(1-\theta)}} \leq e^{{m}\alpha},
\end{equation*}
with $\alpha = {\theta C_V }/{(1-\theta)}$.
We will now show that for all $m\geq 1$,
\begin{equation} \label{eq:expDecaySumLacoin}
\sum_{i_1,\dots,i_m\in I} \DE_{x_0}\left[e^{-{\Lambda'(\beta) \delta_n (1+o_n(1))} \sum_{k=1}^{mn} \mathbf{1}_{(k,S_k) \in J}} \prod_{p=1}^m \mathbf{1}_{S_{np}\in A_{i_p}} \right]^{\theta} \leq e^{-2m\alpha},
\end{equation}
which by \eqref{eq:coarse} will entail that
\begin{equation} \label{eq:expDecayLacoin}
\IE\left[W_{nm}^\theta\right] \leq e^{-m \alpha}.
\end{equation}
Using Markov's property, the summand in \eqref{eq:expDecaySumLacoin} is bounded by
\begin{align*} &\DE_{x_0}\left[e^{-{\frac{1}{2}\Lambda'(\beta) \delta_n } \sum_{k=1}^{m} \mathbf{1}_{S_k \in \tilde{J}_0}} \mathbf{1}_{S_{n}\in A_{i_1}} \right]^{\theta}\\
&\times \prod_{p=1}^{m-1} \sup_{x\in A_{i_p}} \DE_x\left[ e^{-{\frac{1}{2}\Lambda'(\beta) \delta_n } \sum_{k=1}^{n} \mathbf{1}_{S_k \in \tilde{J}_{i_p}}} \mathbf{1}_{S_{n}\in A_{i_{p+1}}}  \right]^{\theta},
\end{align*}
with $\tilde{J}_{i} = \{x\in V : d(x,A_i) \leq C_1 n^{1/d_w}\}$.
Hence \eqref{eq:expDecaySumLacoin} will follow once we show that 
\[{\sup_{j\in I}} \sum_{i} \sup_{x\in A_{j}}  \DE_x\left[ e^{-{\frac{1}{2}\Lambda'(\beta) \delta_n } \sum_{k=1}^{n} \mathbf{1}_{S_k \in \tilde{J}_j}} \mathbf{1}_{S_{n}\in A_{i}}  \right]^{\theta} \leq\varepsilon, \quad \varepsilon:=e^{-2 \alpha}.\]
We decompose the left hand side of the last display as
\begin{equation} \label{eq:2sums_lacoin}
\begin{aligned}
&\displaystyle\sum_{\substack{i\in I \\ d(A_i,A_j) \geq R n^{1/d_w}}}\sup_{x\in A_{j}}  \DE_x\left[ e^{-{\frac{1}{2}\Lambda'(\beta) \delta_n } \sum_{k=1}^{n} \mathbf{1}_{S_k \in \tilde{J}_j}} \mathbf{1}_{S_{n}\in A_{i}}  \right]^{\theta} \\
& \quad + \displaystyle\sum_{\substack{i\in I \\ d(A_i,A_j) < Rn^{1/d_w}}}\sup_{x\in A_{j}}  \DE_x\left[ e^{-{\frac{1}{2}\Lambda'(\beta) \delta_n } \sum_{k=1}^{n} \mathbf{1}_{S_k \in \tilde{J}_j}} \mathbf{1}_{S_{n}\in A_{i}}  \right]^{\theta}.
\end{aligned}
\end{equation}
The first sum is bounded by
\begin{align*}
&\sum_{k\geq R} \displaystyle\sum_{\substack{i\in I \\ d(A_i,A_j)\in[kn^{1/d_w},(k+1)n^{1/d_w})}} \sup_{x\in A_{j}} \DP_x\left(S_{n}\in A_{i}  \right)^{\theta}\\
& \leq C^\theta n^{-\theta d/2} \sum_{k\geq R} e^{-\theta \left(\frac{ k^{d_w}} c\right)^{ 1/{(d_w-1)}}} \displaystyle\sum_{\substack{i\in I \\ d(A_i,A_j) \in[kn^{1/d_w},(k+1)n^{1/d_w})}}  \mu(A_i)^\theta\\
& \leq(CC_V )^{\theta}  2^{\theta d} \sum_{k\geq R} e^{-\theta \left(\frac{ k^{d_w}} c\right)^{1/{(d_w-1)}}} C_G e^{ \left(\frac{ k^{d_w}} {c_2}\right)^{1/{(d_w-1)}}},
\end{align*}
where we have used \eqref{eq:gaussianBoundIsoper}, \eqref{eq:volume_growth} and \eqref{eq:unifDim} with the fact that $\mathrm{diam}(A_i) \leq n^{d/2}$. For $\theta$ close enough to $1$, the last sum can be made smaller than $\varepsilon /2$ by letting $R$ large enough (which we fix from now on).

The second sum in \eqref{eq:2sums_lacoin} is bounded from above by
\[
\# \{i\in I : d(A_i,A_j) < Rn^{1/d_w}\} \sup_{x\in A_{j}}  \DE_x\left[ e^{-{\frac{1}{2}\Lambda'(\beta) \delta_n } \sum_{k=1}^{n} \mathbf{1}_{S_k \in \tilde{J}_j}}  \right]^{\theta},
\]
where, by \eqref{eq:unifDim},
the first factor is bounded by some constant $C'=C'(R)$, and from
\eqref{eq-2709c} and the computation below it,
we find that for $C_1$ and $n$ large enough (in this order), the second factor is bounded  (uniformly in $j\in I$) by
\begin{equation} \label{eq:choiceC2}
\exp\left(-C_1^{-d_f/2} \frac{\Lambda'(\b)}{4}  n^{ \frac {2-d} 4}\right) + \frac{\varepsilon}{4C'}.
\end{equation}
Note that there exists $C>0$ such that for all $\b\leq 1$, we have $\Lambda'(\b) \geq C \b$.
We now choose $n$ to be any integer between $C_2 \beta^{-\frac{4}{2-d}}$ and $2C_2 \beta^{-\frac{4}{2-d}}$, with $C_2$ fixed big enough to make $n$ large enough and to ensure that the quantity in \eqref{eq:choiceC2} is less than $\varepsilon /(2C')$. This shows \eqref{eq:expDecaySumLacoin}.


Now, let $W_r(x,y) = \DE[e_r \mathbf 1_{S_r = y}]$ be the normalized point-to-point partition function. By Markov's property, we have
\[W_{s+r}(x_0) =\sum_{x\in V} W_s(x_0,x) W_r(x)\circ \eta_s\]
where $\eta_s$ is the shift of environment in time. We therefore get that
\begin{align*}
\IE\left[W_{s+r}(x_0)^\theta\right] & \leq \sum_{x\in V} \IE\left[W_s(x_0,x)^\theta\right] \IE\left[W_r(x)^\theta\right]
 \leq C_V s^{d_f} \IE\left[W_s(x_0)^\theta\right],
\end{align*}
since $\#\{x: \DP(S_s = x)>0\} \leq C_V s^{d_f}$ by \eqref{eq:volume_growth} and $\IE[W_r(x)] = 1$.

Hence, decomposing any $t> n$ into $t=nm_0+r$ with $r\in[0,n)$, we obtain along with \eqref{eq:expDecayLacoin} that
\[\IE\left[W_{t}^\theta\right]  \leq C_V t^{d_f} e^{-\alpha m_0}.
\]
Moreover, since
\[ t^{-1} \theta \IE \log W_{t} \leq t^{-1}\log  \IE[W_{t}^{\theta}],\]
we find that
\[\frac{\IE \log W_{t}}{t} \leq  \frac{\log C_V + d_f \log t}{t} - \frac{\alpha m_0}{(m_0+1)n},\]
so letting $t\to\infty$, we obtain that
\[\limsup_{t\to\infty} \frac{\IE \log W_{t}}{t} \leq - \frac{\alpha }{n}\leq -\frac{\alpha}{2C_2} \beta^{\frac 4 {2-d}},\]
with our choice of $n$ (see below \eqref{eq:choiceC2}). This gives \eqref{eq:freeEnergyEstimateLacoin}.
\end{proof}

\section{Proofs for Section \ref{subsec:L2regionIntro}  and
Theorems \ref{th:noL2rec} and \ref{th:transImpL2} -
the $L^2$-region} \label{sec:L2region}
\begin{proof}[Proof of Proposition \ref{prop:L2}]
From the definitions it follows that
the condition of $L^2$-boundedness in \eqref{eq:L2conditionIntro} reduces
to a condition on two independent copies of the random walk:
\begin{equation} \label{eq:L2moment}
\sup_n \IE[W_n(\b,x)^2] = \DE_{x,x}^{\otimes 2} \left[e^{\Lambda_{2}(\b)\sum_{i=1}^\infty \mathbf{1}_{S_i={S}'_i}}\right],
\end{equation}
where $\Lambda_2(\b)$ defined in \eqref{eq-lambda2}
is non-decreasing in $\b$.

We claim that finiteness of $\sup_n \IE[W_n(x)^2]$ does not depend on $x$.
Indeed, by Markov's property, we have for all $x,y\in V$,
\[ \DE_{x,x}^{\otimes 2}\left[e^{\Lambda_{2}(\b)\sum_{i=1}^\infty \mathbf{1}_{S_i={S}'_i}}\right] \geq \DP_{x,x}^{\otimes 2}(\tau_{(y,y)} < \infty) \DE_{y,y}^{\otimes 2}\left[e^{\Lambda_{2}(\b)\sum_{i=1}^\infty \mathbf{1}_{S_i={S}'_i}}\right],
\]
where $\tau_{(y,y)}=\inf_n \left\{n\geq 0: S_n=y, {S}'_n=y\right\}$ with $\DP_{x,x}^{\otimes 2}(\tau_{y,y} < \infty) > 0$ by irreducibily of the walk; this proves our claim.

Existence of the critical parameter $\b_2$ in Proposition \ref{prop:L2} then comes from \eqref{eq:L2moment}.
\end{proof}



\begin{proof}[Proof of Theorem \ref{th:diffL2}]

We follow the lines of Section 3.3 in \cite{CStFlour}.
Let $F$ be a test function. Since $W_n(x,\b)\to W_\infty(x,\b)$ with $W_\infty(x,\b) >0$ a.s, it is enough to show that
\[A_n:= \IE\left(\DE_{x}\left[e_n F(|S_n|/a_n) \right] - W_n(x,\b)\DE[F(X)] \right)^2 \to 0.\]
With $S,S'$ independent copies of $S$,
let $\overline{F}(x) = F(x) - \DE[F(X)]$ and $N_n = \sum_{k=1}^n \mathbf{1}_{S_k=S_k'}$ with $N=\lim_{n\to\infty} N_n$.
We have, with $\Lambda_2=\Lambda_2(\b)$ as in \eqref{eq-lambda2},
\begin{equation*}
A_n  = \DE_{x}^{\otimes 2}\left[e^{\Lambda_2 N_n} \overline{F}(|S_n|/a_n) \overline{F}(|{S}'_n|/a_n) \right],
\end{equation*}
so since the above integrand is bounded by $4\Vert F\Vert_\infty^2 e^{\Lambda_2 N}\in L^1$, it is enough to show that as $n\to\infty$,
\begin{equation} \label{eq:CVlaw3terms}
(N_n,|S_n|/a_n, |S_n'|/a_n)\cvlaw (N,X_1,X_2).
\end{equation}
where $N,X_1,X_2$ are independent and $X_i \eqlaw X$ for $i=1,2$.
Let $F_1,F_2,G$ be bounded Lipschitz functions
. Consider:
\begin{align*}
B_n & := \DE_{x}^{\otimes 2}\left[G(N_n) {F_1}(|S_n|/a_n) {F_2}(|{S}_n'|/a_n) \right]\\
& = \DE_{x}^{\otimes 2}\left[G(N_\ell) {F_1}(|S_n|/a_n) {F_2}(|{S}_n'|/a_n) \mathbf{1}_{N_n=N_\ell} \right] + \varepsilon_{n,\ell}^1\\
& = \DE_{x}^{\otimes 2}\left[G(N_\ell) \DE^{\otimes 2}_{S_\ell,S_\ell'} \left[ {F_1}(|S_{n-\ell}|/a_n){F_2}(|{S}'_{n-\ell}|/a_n)\right] \mathbf{1}_{N_n=N_\ell} \right] + \varepsilon_{n,\ell}^2\\
& = \DE_{x}^{\otimes 2}\left[G(N_\ell) \DE_{S_\ell} \left[ {F_1}(|S_{n-\ell}|/a_{n})\right] \DE_{S'_\ell}\left[{F_2}(|{S}'_{n-\ell}|/a_{n})\right] \right] + \varepsilon_{n,\ell}^3,
\end{align*}
where the $\varepsilon^i_{n,\ell}$ are defined implicitly.
We have:
\[\max\left\{\varepsilon^1_{n,\ell}, \varepsilon^3_{n,\ell}-\sum_{i=1,2} \varepsilon^i_{n,\ell}\right\} \leq \Vert G \Vert_\infty \Vert F_1 \Vert_\infty \Vert F_2 \Vert_\infty \DP_x^{\otimes 2}(N_n\neq N_\ell)\to 0,\]
uniformly in $n\geq \ell$ as $\ell\to\infty$ since $N_n\nearrow N$. Furthermore, for fixed $\ell$, we obtain that $\varepsilon^2_{n,\ell} - \varepsilon^1_{n,\ell}  \to 0$ as $n\to\infty$ by Markov property using that
\begin{align*}
|\DE_x[F_1(|S_{n}|/a_n)] - \DE_x^{\otimes 2}[F_1(d(S_n,S_\ell)/a_n)]| & \leq \Vert F_1 \Vert_{Lip} \DE_x[d(S_\ell,x)/a_n]\\
& \leq \Vert F_1 \Vert_{Lip}\,  \ell\,  a_n^{-1},
\end{align*}
with $a_n^{-1} \to 0$ as $n\to\infty$, where $\Vert F_1 \Vert_{Lip}$ the Lipschitz constant of $F_1$. Now for fixed $\ell$, we have by hypothesis that $|{S}_{n-\ell}|/a_{n} \cvlaw X$ as $n\to\infty$, therefore letting first $n\to\infty$ and then $\ell\to\infty$, we obtain that $B_n \to \DE_x^{\otimes 2}[G(N)] \DE[F_1(X)] \DE[F_2(X)]$, which entails \eqref{eq:CVlaw3terms}.
\end{proof}
\begin{proof}[Proof of Theorem \ref{th:noL2rec}]
Let $(S_k)$ be recurrent and reversible with respect to a measure $(\pi(x))_{x\in V}$ which is bounded away from zero. We will show that $\b_2 = 0$.
We first observe that $(S_{2k})$ is recurrent as well. Indeed, starting from any vertex $x$, either $(S_{2k})$ or $(S_{2k+1})$ visits $x$ infinitely many times. If $x$ has even period, then it has to be $(S_{2k})$; if it has odd period, then both visit $x$ infinitely often by irreducibility of the walk.

Now, with $S,S'$ denoting independent copies of $S$, let
\[N_\infty(S,S')=\sum_{i=1}^\infty \mathbf{1}_{S_i=S'_i}.\]
Since the reversible measure satisfies $\pi(x)\DP_x(S_n=y) = \DP_y(S_n=x) \pi(y)$, we have:
\begin{align} \DE^{\otimes 2}_{x,x} N_\infty(S,S') & = \sum_{n\geq 0} \sum_{y\in V} \DP_x(S_n = y)^2 \nonumber\\
& = \sum_{n\geq 0} \sum_{y\in V} \DP_x(S_n = y) \frac{\pi(y)}{\pi(x)} \DP_y(S_n = x) \label{eq:NinfGreenRev} \\
&  \geq \frac{\inf_{y} \pi(y)}{\pi(x)} \sum_{n\geq 0} \DP_x(S_{2n} = x) = \infty, \nonumber
\end{align}
where the last equality holds since $(S_{2k})_{k\geq 0}$ is recurrent. Hence the RHS of \eqref{eq:L2moment} is infinite when $\b > 0$ by the lower bound $e^{x} \geq x$.
\end{proof}


\begin{proof}[Proof of Theorem \ref{th:transImpL2}]
Recall the definition of the Green function of $S$ in \eqref{eq:defGreenFunction}. Let $(S_k)$ be transient and reversible with respect to a measure $(\pi(x))_{x\in V}$ which is bounded from above and such that $\sup_{x\in V} G(x,x)/\pi(x) < \infty$. We show that $\b_2>0$.

By Khas'minskii's lemma \cite[P.8, Lemma 2]{Sznitman}, there is an $L^2$-region whenever
\begin{equation}\label{eq:Khas}
\sup_{x\in V} \DE_{x,x}^{\otimes 2} \left[ N_\infty(S,S') \right] < \infty,
\end{equation}
and we have by \eqref{eq:NinfGreenRev},
\begin{equation*} \DE^{\otimes 2}_{x,x} N_\infty(S,S')   \leq \frac{C}{\pi(x)} \sum_{k\geq 0} \DP_x(S_{2k} = x)
 \leq  C\frac{G(x,x)}{\pi(x)},
\end{equation*}
from which the statement of the theorem follows.
\end{proof}

\subsection{Criteria for $\b_2=0$ and proof of Theorem
\ref{th:ConditionForNoL2}}
The following expression for  $\IE(W_\infty^2)$ will be useful in this section.
Recall $\Lambda_2=\Lambda_2(\b)$, see
\eqref{eq-lambda2}.
\begin{proposition} \label{prop:L2viaChaos} The following identity holds:
\begin{equation*}
  \lim_{n\to\infty} \IE\left[ W_n^2(x,\b) \right] = 1 + \sum_{k=1}^\infty \left(e^{\Lambda_2} -1 \right)^k \sum_{1\leq n_1<\dots < n_k} \DE_x^{\otimes 2} \left[ \mathbf{1}_{S_{n_1}=S'_{n_1},\dots, S_{n_k} = S'_{n_{k}}}  \right].
\end{equation*}
\end{proposition}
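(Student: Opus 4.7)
The plan is to start from the standard second-moment identity for two independent copies of the walk and then perform an indicator-wise expansion of the exponential.

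First I would compute $\IE[W_n^2(x,\beta)]$ by writing $W_n(x,\beta)^2$ as a double expectation over two independent copies $S,S'$ of the walk started at $x$, and using independence of the environment variables at distinct space-time points. As already recorded in \eqref{eq:L2moment}, this yields
\begin{equation*}
\IE[W_n^2(x,\beta)] = \DE_x^{\otimes 2}\!\left[e^{\Lambda_2 \sum_{i=1}^n \mathbf{1}_{S_i = S'_i}}\right].
\end{equation*}
The key algebraic trick is that $\mathbf{1}_{S_i=S'_i}\in\{0,1\}$, so $e^{\Lambda_2 \mathbf{1}_{S_i=S'_i}} = 1 + (e^{\Lambda_2}-1)\mathbf{1}_{S_i=S'_i}$, and therefore
\begin{equation*}
e^{\Lambda_2 \sum_{i=1}^n \mathbf{1}_{S_i=S'_i}} = \prod_{i=1}^n \bigl(1 + (e^{\Lambda_2}-1)\mathbf{1}_{S_i=S'_i}\bigr).
\end{equation*}

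Next I would expand this product over subsets $A\subset\{1,\dots,n\}$, which gives
\begin{equation*}
\prod_{i=1}^n \bigl(1 + (e^{\Lambda_2}-1)\mathbf{1}_{S_i=S'_i}\bigr) = \sum_{k=0}^n (e^{\Lambda_2}-1)^k \sum_{1\leq n_1 < \cdots < n_k \leq n} \prod_{j=1}^k \mathbf{1}_{S_{n_j}=S'_{n_j}}.
\end{equation*}
Taking $\DE_x^{\otimes 2}$ and pulling it inside the finite sum (which is legal since all terms are nonnegative and there are finitely many) produces exactly the partial sum truncated at $n$.

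Finally I would pass to the limit $n\to\infty$. Since $\Lambda_2(\beta)\geq 0$ by convexity of $\Lambda$ (with $\Lambda_2=0$ only when $\beta=0$), every summand is nonnegative, and so monotone convergence applies: as $n\uparrow\infty$, the truncated sums converge monotonically to
\begin{equation*}
1 + \sum_{k=1}^\infty (e^{\Lambda_2}-1)^k \sum_{1\leq n_1<\cdots<n_k} \DE_x^{\otimes 2}\!\left[\mathbf{1}_{S_{n_1}=S'_{n_1},\dots,S_{n_k}=S'_{n_k}}\right],
\end{equation*}
whether the limit is finite or $+\infty$. On the left-hand side, $\IE[W_n^2]$ is non-decreasing in $n$ (since $(W_n)$ is a nonnegative martingale, so $W_n^2$ is a submartingale), hence its limit also exists in $[1,+\infty]$ and equals the supremum, matching the right-hand side.

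There is no genuine obstacle here; the only thing to be careful about is to use monotone convergence rather than dominated convergence (the right-hand side may diverge when $\beta>\beta_2$, and the identity is meant to hold in $[1,\infty]$ in that case), and to invoke $\mathbf{1}_{S_i=S'_i}\in\{0,1\}$ so that $e^{\Lambda_2 \mathbf{1}}$ linearizes exactly — without this the combinatorial expansion would not close.
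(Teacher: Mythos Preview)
Your proof is correct and follows exactly the same route as the paper: rewrite $e^{\Lambda_2 \mathbf 1_{S_i=S'_i}}=1+(e^{\Lambda_2}-1)\mathbf 1_{S_i=S'_i}$, expand the product, and pass to the limit by monotone convergence. Your write-up is simply more detailed (making explicit the nonnegativity via $\Lambda_2\geq 0$ and the monotonicity of $\IE[W_n^2]$), but there is no difference in approach.
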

\begin{proof}  We have
\[\IE\left[ W_n^2 (x)\right] = \DE^{\otimes 2}_x\left[ e^{\Lambda_2\sum_{i=1}^n  \mathbf{1}_{S_{i}= S'_i}}\right] = \DE^{\otimes 2}_x\left[\prod_{i=1}^n \left(1+(e^{\Lambda_2}-1) \mathbf{1}_{S_{i}= S'_i} \right) \right].\]
Now expand the product and take $n\to\infty$,
using monotone convergence.
\end{proof}



\begin{proof}[Proof of Theorem \ref{th:ConditionForNoL2}]
By Proposition \eqref{prop:L2viaChaos}, it is enough to show that for some $x_0\in V$ and for all $\delta>0$, there exists a constant $C>0$ such that for all $k\geq 1$,
\begin{equation}
  \label{eq-2609a}
  \sum_{1\leq n_1<\dots < n_{k}} \DE_{x_0}^{\otimes 2} \left[ \mathbf{1}_{S_{n_1}=S'_{n_1},\dots, S_{n_{k}} = S'_{n_{k}}}  \right] \geq C \delta^{-k}.
\end{equation}
We fix an arbitrary
$x_0\in V$ and $\delta>0$. By the assumption of the theorem,
there exists  $x^\star\in V$ such that
\[\sum_{m\geq 0} \DP_{x^\star}(S_m=x^\star)^2 \geq \delta^{-1}. \]
Therefore,
\begin{align*}
&\sum_{1\leq n_1<\dots < n_{k}} \DE_{x_0}^{\otimes 2} \left[ \mathbf{1}_{S_{n_1}=S'_{n_1},\dots, S_{n_{k}} = S'_{n_{k}}}  \right]\\
& = \sum_{m_1,\dots,m_k} \sum_{x_1,\dots,x_{k}\in V} \prod_{i=0}^{k-1} \DP_{x_{i}}\left(S_{m_i}= x_{i+1}\right)^2\\
& \geq \left( \sum_{m_1} \DP_{x_0}(S_{m_1} = x^\star)^2\right) \left( \sum_{m} \DP_{x^{\star}}(S_{m} = x^\star)^2 \right)^{k-1}
 \geq C \delta^{-k},
\end{align*}
where $C>0$  by the irreducibility of $(S)$.
This proves \eqref{eq-2609a}.
\end{proof}

We describe an application of  Theorem \ref{th:ConditionForNoL2} to
the construction of a family of transient graphs satisfying $\b_2=0$.
\begin{definition}
  \label{def-pipe}
A \emph{pipe} in $G$ is
a chain of vertices $v_1,\ldots,v_L$ satisfying
\begin{enumerate}
  \item $(v_i,v_{i+1})\in E$.
  \item  For $i=2,\ldots,L-1$, the degree of $v_i$ is $2$.
\end{enumerate}
\end{definition}
\begin{proposition} \label{cor:NoL2forLongPipes}
Let $G$ contain arbitrarily long pipes. Then
$\b_2=0$ for the $G$/SRW polymer.
\end{proposition}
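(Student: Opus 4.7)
The approach is to verify the criterion \eqref{eq-bloc58c} of Theorem \ref{th:ConditionForNoL2}: it suffices to show that $\sup_{x\in V}\sum_{k\geq 0}\DP_x(S_k=x)^2=\infty$. Given any integer $L\geq 3$, pick a pipe $v_1,\dots,v_L$ in $G$ (which exists by hypothesis) and set $x=v_{\lceil L/2\rceil}$. The key observation is that, since every interior pipe vertex has degree exactly $2$ by Definition \ref{def-pipe}, the SRW started at $x$ agrees, up to the first exit time $\tau$ from the pipe interior, with an ordinary one-dimensional SRW under the identification $v_j\leftrightarrow j-\lceil L/2\rceil\in\mathbb{Z}$. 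Consequently
\[
\DP_x(S_{2k}=x)\;\geq\;\DP_x(S_{2k}=x,\,\tau>2k)\;=\;\DP_0^{\mathbb{Z}}\!\left(S_{2k}=0,\;\max_{j\leq 2k}|S_j|<L/2\right).
\]

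To lower bound this, I combine two standard one-dimensional estimates: Stirling's formula gives $\DP_0^{\mathbb{Z}}(S_{2k}=0)=\binom{2k}{k}2^{-2k}\geq c_1/\sqrt{k}$, and the reflection principle together with Azuma--Hoeffding yields $\DP_0^{\mathbb{Z}}(\max_{j\leq 2k}|S_j|\geq L/2)\leq 4e^{-L^2/(16k)}$. Choosing the range $1\leq k\leq L^2/(C\log L)$ with $C$ large enough, the deviation bound is at most $c_1/(2\sqrt{k})$, so $\DP_x(S_{2k}=x)\geq c_1/(2\sqrt{k})$ on this range. Summing squares,
\[
\sum_{k\geq 0}\DP_x(S_k=x)^2\;\geq\;\sum_{k=1}^{\lfloor L^2/(C\log L)\rfloor}\frac{c_1^2}{4k}\;\geq\;c'\log L,
\]
for some $c'>0$ independent of $L$. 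Since $L$ is arbitrary, \eqref{eq-bloc58c} is verified, and Theorem \ref{th:ConditionForNoL2} then gives $\beta_2=0$.

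There is no serious obstacle: the only point that requires a moment's care is the decoupling of the walk from the rest of $G$ while it remains inside the pipe, and this is an immediate consequence of the degree-two condition baked into the definition of a pipe. Everything else reduces to qualitative one-dimensional SRW estimates, used only to extract the divergent harmonic-series tail $\sum 1/k\asymp\log L$.
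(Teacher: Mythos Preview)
Your proof is correct and follows the same approach as the paper: both verify \eqref{eq-bloc58c} by taking $x$ to be the center of a long pipe and comparing with one-dimensional SRW return probabilities. The paper's version is a bit cleaner, though: since the walk from the center cannot reach either pipe endpoint in fewer than $L/2$ steps, one has the exact equality $p_k(x,x)=p_k^{\mathbb Z}(0,0)$ for all $k<L/2$, so summing $p_k^{\mathbb Z}(0,0)^2\asymp 1/k$ over $k\leq L/2$ already yields $\asymp\log L$ without needing the reflection/Azuma deviation bound or the extended range $k\lesssim L^2/\log L$.
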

\begin{proof}
Consider a pipe of length $L$ and let $x$ be its center. Then,
 $p_k(x,x) = p_k^\mathbb{Z}(x,x)$ for $k< L/2$, where $ p_k^\mathbb{Z}(x,y)$ is the transition probability for simple random walk on $\mathbb Z$.
 Hence $\sum_{k} p_k(x,x)^2 \geq  \sum_{k=1}^{L/2} p_k^\mathbb{Z}(x,x)^2 \sim \log L \to \infty $ as $L\to\infty$, which is condition \eqref{eq-bloc58c}.
 An application of Theorem \ref{th:ConditionForNoL2} completes the proof.
\end{proof}


\noindent
The condition in Theorem \ref{th:ConditionForNoL2} can be relaxed to the following, writing $\tau_S(x,L)$ for the hitting time of  a
ball of radius $L$ around $x$. The proof is identical to that of Theorem \ref{th:ConditionForNoL2}
and is therefore omitted.
\begin{theorem} \label{th:ConditionForNoL2bis}
Assume that
\begin{equation}
\label{bloc58}\sup_{L>0}\sup_{x\in V} \inf_{y\in B(x,L/2)} \DE^{\otimes 2}_{y}\sum_{k\geq 0} \mathbf{1}_{S_{k} =S'_{k}, S_k\in B(x,L/2), \tau_S(x,L)>k, \tau_{S'}(x,L)>k} = \infty.
\end{equation}
Then, $\b_2=0$.
\end{theorem}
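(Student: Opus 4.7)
The plan is to follow the proof of Theorem \ref{th:ConditionForNoL2} almost verbatim, only restricting every meeting event by the extra conditions that the meeting point lies in $B(x,L/2)$ and that neither walk has exited $B(x,L)$. By Proposition \ref{prop:L2viaChaos}, it suffices to show that for every $\delta>0$ there exist $x_0\in V$ and $C>0$ such that, for all $k\ge 1$,
\begin{equation*}
\sum_{1\le n_1<\cdots<n_k}\DE^{\otimes 2}_{x_0}\!\left[\mathbf{1}_{S_{n_1}=S'_{n_1},\,\ldots,\,S_{n_k}=S'_{n_k}}\right]\ \ge\ C\,\delta^{-k}.
\end{equation*}
Indeed, given any $\b>0$ we have $e^{\Lambda_2(\b)}-1>0$, so picking $\delta$ small enough that $(e^{\Lambda_2(\b)}-1)\delta^{-1}>1$ makes the series of Proposition \ref{prop:L2viaChaos} diverge, giving $\b_2=0$.

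Given $\delta>0$, I would use \eqref{bloc58} to pick $L>0$ and $x\in V$ with
\begin{equation*}
\inf_{y\in B(x,L/2)}\DE^{\otimes 2}_y\!\left[\sum_{k\ge 0}\mathbf{1}_{S_k=S'_k,\,S_k\in B(x,L/2),\,\tau_S(x,L)>k,\,\tau_{S'}(x,L)>k}\right]\ \ge\ \delta^{-1},
\end{equation*}
and then set $x_0:=x\in B(x,L/2)$. I would lower-bound the $k$-fold sum by restricting to the sub-event that for each $i$, the common position $z_i:=S_{n_i}=S'_{n_i}$ lies in $B(x,L/2)$ and neither walk has exited $B(x,L)$ by time $n_i$. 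Decomposing over intermediate meeting points $z_1,\ldots,z_k\in B(x,L/2)$ and over the gap lengths $m_i:=n_i-n_{i-1}$ (with $n_0=0$, $z_0=x_0$), the strong Markov property yields the factorisation
\begin{equation*}
\prod_{i=1}^k\ \sum_{m\ge 1}\sum_{z\in B(x,L/2)}\DP^{\otimes 2}_{z_{i-1}}\!\bigl(S_m=S'_m=z,\,\tau_S(x,L)>m,\,\tau_{S'}(x,L)>m\bigr)\ \ge\ \delta^{-k},
\end{equation*}
where each factor is at least $\delta^{-1}$ by the choice of $L,x$ and because $z_{i-1}\in B(x,L/2)$.

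The only (minor) subtlety to be careful about is the bookkeeping of the no-exit constraints when iterating the Markov property: after restarting at time $n_{i-1}$ from $z_{i-1}\in B(x,L/2)$, the event $\{\tau_S(x,L)>m\}$ for the restarted walk corresponds to the original walk not exiting $B(x,L)$ during the interval $(n_{i-1},n_{i-1}+m]$, and combined with the analogous earlier constraints this is exactly what ensures $\tau_S(x,L)>n_i$ for the original walk. With this verified, the product lower bound $\delta^{-k}$ is a valid lower bound for the restricted $k$-fold sum, and hence for the unrestricted one, completing the proof.
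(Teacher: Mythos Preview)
Your proof is correct and follows exactly the approach the paper intends: the paper states that the proof is identical to that of Theorem \ref{th:ConditionForNoL2} and omits it, and your argument is precisely the natural adaptation of that proof, with the extra bookkeeping of the no-exit constraints handled correctly. One small cosmetic difference: the paper's (commented-out) version fixes an arbitrary $x_0\in V$ and uses irreducibility to get a positive first factor $\sum_{m_1}\DP_{x_0}(S_{m_1}=x^\star)^2$, while you take $x_0=x\in B(x,L/2)$ so that the first factor is already $\ge\delta^{-1}$; your choice is slightly cleaner.
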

\subsection{$\b_2=0$ for the lattice supercritical percolation cluster}
In this section, we consider $G$ to be a   supercritical
percolation cluster on $\Z^d$, see Section \ref{subsec-perco} for definitions.
\begin{theorem}
\label{th:noL2perco}
$\b_2=0$ a.s. for SRW on the supercritical percolation cluster of $\mathbb Z^d$,
$d\geq 2$.
\end{theorem}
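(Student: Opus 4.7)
The plan is to reduce the claim to Proposition \ref{cor:NoL2forLongPipes} by establishing that, almost surely, the infinite supercritical percolation cluster $\mathcal{C}_\infty$ contains pipes of arbitrary length in the sense of Definition \ref{def-pipe}. Once this is known, Proposition \ref{cor:NoL2forLongPipes} applies pathwise to the (a.s.\ realized) random graph $G = \mathcal{C}_\infty$ and yields $\beta_2 = 0$ for SRW on it. Since the percolation measure is independent of the environment $\omega$, all percolation-measurable statements below can be made before freezing a realization of the cluster.

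Fix $L\in\mathbb N$ and, for $y\in\mathbb Z^d$, let $F_L(y)$ be the event that the following three conditions hold: (a) the $L-1$ edges $(y+(i-1)e_1,y+ie_1)$ for $i=1,\ldots,L-1$ are open; (b) for each $i=1,\ldots,L-2$, all $2(d-1)$ edges from $y+ie_1$ perpendicular to $e_1$ are closed; and (c) both $y$ and $y+(L-1)e_1$ are connected to infinity by open paths that use only edges outside the finite set prescribed by (a)--(b). On $F_L(y)$, the straight segment $y,y+e_1,\ldots,y+(L-1)e_1$ is a pipe of length $L$ in $\mathcal{C}_\infty$: its $L-2$ interior vertices have exactly two open incident edges by (a)--(b), hence degree $2$ in $\mathcal{C}_\infty$, while the endpoints lie in $\mathcal{C}_\infty$ by (c). The events (a)--(b) depend on a finite set of edges and have positive probability; conditionally on them, the configuration of all other edges is unchanged (independence of disjoint edge sets in Bernoulli percolation), so each endpoint has $2d-1$ untouched incident edges. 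Since $p>p_c$, the standard FKG inequality applied to the two increasing ``connection to infinity'' events (both avoiding the forbidden edge set) gives (c) with positive conditional probability; thus $\mathbb{P}(F_L(y))\geq c_L>0$, uniformly in $y$. By ergodicity of the Bernoulli product measure under lattice translations, the translation-invariant event $\{F_L(y)\text{ occurs for some }y\in\mathbb Z^d\}$ has probability one, and intersecting over $L\in\mathbb N$ (a countable set) shows that almost surely $\mathcal{C}_\infty$ contains pipes of every length.

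The main obstacle is step (c): reconciling the local deterministic constraints (a)--(b) with both endpoints lying on the unbounded infinite cluster. This is essentially a standard finite-energy/FKG argument in percolation, but must be set up carefully so that the two increasing events used in FKG depend only on edges disjoint from those fixed by (a)--(b), which is possible because only finitely many edges are constrained. With arbitrarily long pipes in $\mathcal{C}_\infty$ established a.s., Proposition \ref{cor:NoL2forLongPipes} (whose proof uses that return probabilities of SRW at the center of a pipe of length $L$ agree with those of 1D SRW until the walk exits the pipe, giving $\sum_{k}p_k(x_L,x_L)^2\gtrsim\log L$) yields $\sup_{x\in\mathcal{C}_\infty}\sum_k p_k(x,x)^2=\infty$. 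An application of Theorem \ref{th:ConditionForNoL2} then delivers $\beta_2=0$, a.s.
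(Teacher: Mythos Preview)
Your proof is correct and follows the same strategy as the paper: both reduce to Proposition~\ref{cor:NoL2forLongPipes} by showing that the supercritical cluster a.s.\ contains arbitrarily long pipes, via a local construction with positive probability together with ergodicity. The only difference is cosmetic---the paper places the pipe inside a box whose boundary edges are all declared open (so that ``the box meets $\mathcal{C}_\infty$'' is a single increasing event to which FKG applies), whereas you prescribe the pipe edges directly and attach each endpoint to infinity through the complement of the constrained edge set; both are standard finite-energy/FKG arguments.
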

Theorem \ref{th:noL2perco} follows at once from Corollary \ref{cor:NoL2forLongPipes} and the next lemma.
Recall the notion of pipe, see Definition \ref{def-pipe}.
\begin{lemma} For all $L\geq 1$ and $d\geq 2$,
there exist a.s.
infinitely many pipes of length $L$ in the supercritical percolation cluster.
\end{lemma}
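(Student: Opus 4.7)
The plan is to show that a canonical local configuration produces a pipe of length $L$ in the infinite cluster $\mathcal C_\infty$ with strictly positive probability, and then to invoke ergodicity of Bernoulli percolation to obtain infinitely many such pipes almost surely. I restrict attention to $p\in(p_c,1)$; the case $p=1$ is excluded as the cluster is then all of $\mathbb Z^d$ and contains no pipes of length $\geq 3$ for $d\geq 2$, so the lemma (and Theorem \ref{th:noL2perco}) implicitly require $p<1$.

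Fix the straight segment $v_i = (i-1)e_1$ for $i=1,\ldots,L$, and let $A$ be the event that $(v_1,\ldots,v_L)$ forms a pipe of $\mathcal C_\infty$ in the sense of Definition \ref{def-pipe}. I would decompose $A = E_{\mathrm{loc}} \cap E_{\mathrm{conn}}$, where $E_{\mathrm{loc}}$ is the event that the $L-1$ pipe edges $(v_i,v_{i+1})$ are open and that all other $(2d-2)(L-2)$ edges incident to the interior vertices $v_2,\ldots,v_{L-1}$ are closed, while $E_{\mathrm{conn}}$ is the event that $v_1$ is connected to infinity by an open path that avoids $v_2,\ldots,v_{L-1}$. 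When both events hold, each interior $v_j$ has degree exactly $2$ in the open subgraph (its only open edges being the pipe ones), and $v_1,\ldots,v_L$ all lie in $\mathcal C_\infty$, so the segment is indeed a pipe of $\mathcal C_\infty$.

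The events $E_{\mathrm{loc}}$ and $E_{\mathrm{conn}}$ depend on disjoint sets of edges (those incident to the interior vertices versus the rest), hence are independent under the product Bernoulli measure. Clearly $\mathbb P(E_{\mathrm{loc}}) = p^{L-1}(1-p)^{(2d-2)(L-2)}>0$. For $E_{\mathrm{conn}}$, observe that it coincides with the event that $v_1$ lies in the infinite cluster of Bernoulli$(p)$ percolation on the graph $G' = \mathbb Z^d \setminus \{v_2,\ldots,v_{L-1}\}$. Since $G'$ differs from $\mathbb Z^d$ only by a finite vertex set, it has the same critical probability, so $p>p_c(G')$ and $\mathbb P(E_{\mathrm{conn}})>0$. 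Combining, $\mathbb P(A)>0$.

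To conclude, use that Bernoulli percolation on $\mathbb Z^d$ is ergodic (in fact mixing) under lattice translations. Applying the ergodic theorem to the indicator of $A$ under the shift by $Le_1$, the density of $k\geq 0$ for which a pipe of length $L$ starts at $kLe_1$ in $\mathcal C_\infty$ is almost surely equal to $\mathbb P(A)>0$. Since the translates of $A$ by $kLe_1$ involve pairwise disjoint vertex segments, this yields infinitely many distinct pipes of length $L$ in $\mathcal C_\infty$ almost surely. The main potential obstacle is ensuring the simultaneous occurrence of the local pipe configuration and the connection of its endpoint to the infinite cluster; this is circumvented by the decomposition into two events supported on disjoint edge sets, together with the invariance of $p_c$ under removal of a finite vertex set.
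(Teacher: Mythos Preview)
Your proof is correct and takes a genuinely different route from the paper's. The paper partitions $\mathbb Z^d$ into boxes of side $L+1$ and, for a fixed box $B$, uses the FKG inequality to show that the two increasing events $A_1=\{B\cap\mathcal C_\infty\neq\emptyset\}$ and $A_2=\{$all boundary edges of $B$ are open$\}$ occur jointly with positive probability; it then observes that, conditionally on $A_2$, the event $A_1$ depends only on edges outside $B$, so the existence of a pipe inside $B$ rooted on $\partial B$ is conditionally independent of $A_1$. Your decomposition $A=E_{\mathrm{loc}}\cap E_{\mathrm{conn}}$ bypasses FKG entirely by arranging the two pieces on disjoint edge sets from the outset, at the cost of invoking the (standard) fact that removing finitely many vertices from $\mathbb Z^d$ leaves $p_c$ unchanged and hence a fixed vertex of $G'$ has positive probability of lying in its infinite cluster. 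Both arguments close with ergodicity. Your approach is slightly more elementary in avoiding correlation inequalities; the paper's avoids the appeal to $p_c$-invariance under finite modification. Your remark that the case $p=1$ must be excluded (since then no vertex has degree $2$) is a useful caveat the paper leaves implicit.
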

\begin{proof}
Partition $\Z^d$ to boxes of side $L+1$.
Fix such a box $B$ and let $\mathcal{C}$ denote the (unique)
infinite cluster. Let $\mathcal E$ denote all edges that connect two vertices in the boundary of $B$.  Define the events
$$A_1=\{B\cap \mathcal{C}\neq \emptyset\},\quad
A_2=\{\mbox{\rm All edges in $\mathcal E$ are open}\}.$$
Note that $A_1$ and $A_2$ are increasing functions. Hence, by FKG
and $p>p_c$,
$$P(A_1\cap A_2)\geq P(A_1)\cdot P(A_2)>0.$$
On the other hand, let ${\mathcal P}_L$ denote the event that there exists in $B$ a pipe $(v_1,\ldots,v_L)$ of length $L$ with $v_1$ belonging to  the boundary of $B$. Then
since on $A_2$, $A_1$ only depends on the configuration outside $B$,
we have that
$$P( {\mathcal P}_L|A_1\cap A_2)=P({\mathcal P}_L|A_2)>0.$$
Combining the above, we get
$$P({\mathcal P}_L\cap A_1)\geq P({\mathcal P}_L \cap A_1\cap A_2)>0.$$
Thus, with positive probability, there exists a pipe of length $L$ in the supercritical percolation cluster. By ergodicity, this implies the lemma.
\end{proof}

\subsection{A transient graph with $0=\b_2<\b_c$}
\label{subsec:transGraphWeakButNoL2}
Recall Definition \ref{def-pipe}.
Consider the graph $G_{\ref{th:ZdwithPipes}}$
that is obtained by glueing to $\mathbb Z^d$,
on the $k$th vertex of the line
\[\mathcal D :=\{(k,0,\dots,0)\in\mathbb Z^d,k\geq 1\},\]
 pipes of length $k$.
 See Figure \ref{fig:ZDPIPES} for an illustration
 \begin{figure}
  \includegraphics[scale=0.3]{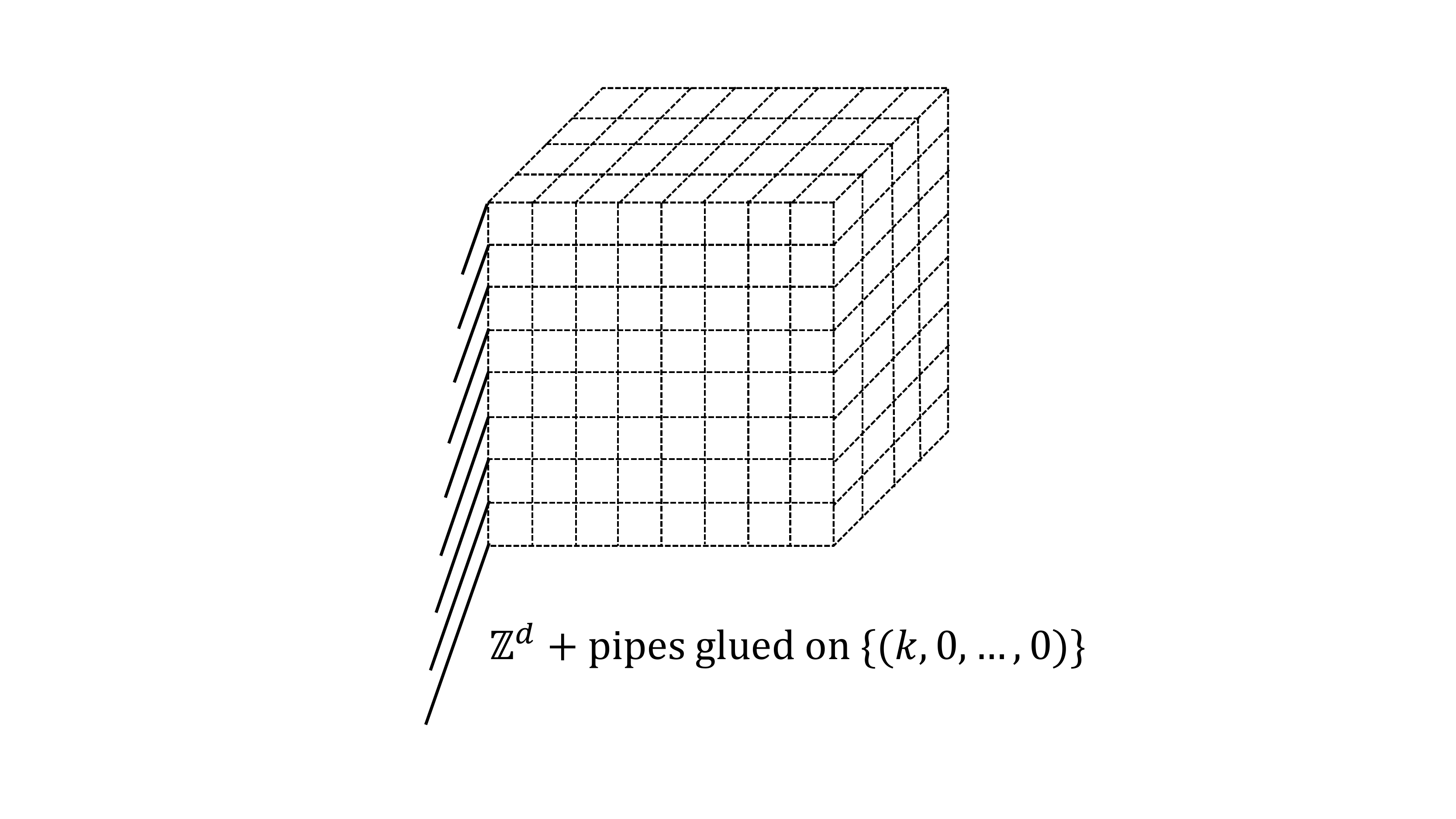}
  \caption{The graph  $G_{\ref{th:ZdwithPipes}}$, for which 
   $0=\b_2<\b_c$. }
  \label{fig:ZDPIPES}
\end{figure}

\begin{theorem} \label{th:ZdwithPipes}
The polymer on
$G_{\ref{th:ZdwithPipes}}$/SRW with $d\geq 4$ 
\end{theorem}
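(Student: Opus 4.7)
Since $G_{\ref{th:ZdwithPipes}}$ contains a pipe of length $k$ for every $k\geq 1$, Proposition \ref{cor:NoL2forLongPipes} immediately yields $\beta_2=0$. To obtain $\beta_c>0$, the plan is to apply Birkner's criterion (Theorem \ref{th:Birkner}): it suffices to show that for some fixed $x_0\in V$ and all sufficiently small $\beta>0$,
\[
\DE^{\otimes 2}_{x_0}\bigl[e^{\Lambda_2(\beta) N_\infty(S,S')}\mid \mathcal F_S\bigr]<\infty, \qquad \IP\text{-a.s.}
\]
I would decompose $N_\infty(S,S')=N^{\mathrm{lat}}+\sum_{k\geq 1}N^{(k)}$ according to whether the common location at time $t$ lies on the $\mathbb Z^d$-sublattice or inside the $k$-th pipe, and treat the two contributions separately via Cauchy--Schwarz.

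The lattice part $N^{\mathrm{lat}}$ would be handled by comparison with $\mathbb Z^d$-SRW. Observing the walk only at its visits to $\mathbb Z^d$ gives a Markov chain on $\mathbb Z^d$ that agrees with SRW outside $\mathcal D$ and differs only by an added self-loop of probability $1/(2d+1)$ at each $v_k \in \mathcal D$ (corresponding to pipe excursions). This bounded perturbation of SRW still satisfies Gaussian heat-kernel upper bounds in $d\geq 3$, so Birkner's classical $\mathbb Z^d$-argument (which underlies the $L^2$-region for $\mathbb Z^d$/SRW in $d\geq 3$, see Remark \ref{rem-birkner}) carries through and yields a finite exponential moment of $N^{\mathrm{lat}}$ under $S'$, unconditionally, for small $\beta$.

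For the pipe contribution, the key input is the a.s.\ finiteness of the random set of pipes visited by $S$ and $S'$. The probability that $S'$ ever enters pipe $k$ is at most $(2d+1)^{-1}\DP_{x_0}(S'\text{ hits }v_k)\lesssim k^{-(d-2)}$ by the standard Green-function bound on transient $\mathbb Z^d$; the assumption $d\geq 4$ makes this summable in $k$, so Borel--Cantelli forces $S'$ to enter only finitely many pipes a.s.\ (and likewise for $S$). Each individual $N^{(k)}(S,S')$ is dominated by the total time $S'$ spends in pipe $k$, itself a geometric sum of i.i.d.\ excursion lengths of a reflected one-dimensional walk on $\{0,\dots,k\}$; these excursion times have sub-exponential tails with rate of order $k^{-2}$. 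A H\"older-type decomposition with weights $p_k\propto k^2$ (so that $\sum_k p_k^{-1}<\infty$), together with $\Lambda_2(\beta)$ chosen small enough compared to the effective $k^{-2}$ thresholds, then yields $\DE^{S'}[\exp(\Lambda_2\sum_k N^{(k)})]<\infty$ almost surely on $S$ for all sufficiently small $\beta$.

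The main obstacle is to make the per-pipe estimates uniform in $k$: since the natural exponential-moment parameter for a pipe-$k$ excursion scales like $k^{-2}$, the H\"older splitting must be tuned so that the $k$-dependent loss in the exponential rate remains summable, and one must combine the pipe excursions with the independent $\mathbb Z^d$-motion of $S'$. The condition $d\geq 4$ enters precisely through the summability $\sum_k G(x_0,v_k)<\infty$, without which the set of visited pipes is a.s.\ infinite and the pipe-by-pipe argument collapses.
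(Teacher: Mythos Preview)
Your argument for $\beta_2=0$ is exactly the paper's. For $\beta_c>0$, however, the paper does \emph{not} go through Birkner's criterion. It instead restricts the partition function to the event
\[
A=\{S\text{ never enters the line }\mathcal D\},
\]
which has positive probability for SRW on $\mathbb Z^d$ when $d\ge 4$ (a line is polar). On $A$ the walk is literally $\mathbb Z^d$-SRW, so the restricted martingale $W_n(0,A)=\DE_0[e_n\mathbf 1_A]$ satisfies $\IE[W_\infty(0,A)^2]\le \IE[(W_\infty^{\mathbb Z^d}(0))^2]<\infty$ for small $\beta$. Hence $\IE[W_\infty(0)]\ge \IE[W_\infty(0,A)]=\DP_0(A)>0$, forcing $\beta_c>0$. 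This bypasses entirely any analysis of what happens inside the pipes.

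Your Birkner route, as written, has two genuine gaps. First, the lattice part: passing to the trace chain on $\mathbb Z^d$ destroys the common clock, and $N^{\mathrm{lat}}$ counts \emph{same-time} coincidences of the original walks on $G$. Gaussian heat-kernel bounds for the trace chain say nothing about $\DP_{x_0}^G(S'_t=y)$ at the actual times $t$ at which $S_t=y$; while $S$ sits in a pipe, $S'$ keeps moving (and vice versa), so the conditional distribution of $N^{\mathrm{lat}}$ given $S$ is not controlled by the $\mathbb Z^d$ argument you invoke. Second, the pipe part: H\"older with exponents $q_k\propto k^2$ forces you to take exponential moments of the pipe-$k$ occupation time at rate $\sim \Lambda_2 k^2$, which for any fixed $\Lambda_2>0$ blows past the $O(k^{-2})$ spectral-gap threshold for all large $k$. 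The finiteness you need would instead follow from the trivial deterministic bound $\sum_k N^{(k)}\le T(S)$ (the total time the \emph{conditioning} walk $S$ spends in pipes, a.s.\ finite), but that still leaves the lattice contribution unresolved. The paper's restriction-to-$A$ trick is precisely what makes the lattice comparison clean, and it is what your argument is missing.
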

\begin{proof}
  In view of Corollary \ref{cor:NoL2forLongPipes}, for the first assertion
  it suffices to prove that
  $\b_c>0$. Denote by $A=A(S)$ the event $A=\{S \text{ never enters } \mathcal D\}$. Then, when $d\geq 4$,
\[\DP_0(A)=\DP^{\mathbb Z^d}_0(A)>0,\]
where $(\DP^{\mathbb Z^d},S)$ is the SRW on $\mathbb Z^d$.

We now introduce the martingale
$W_n(0,A)=\DE_0[e_n \mathbf{1}_A]$. We will
prove that, for some $\b>0$,  $W_n(0,A)$  is uniformly
bounded in $L^2$. This will imply
that for such $\b$,   $\IE[W_\infty(0,A)] = \DP_0(A) > 0$,
and since $W_\infty(0) \geq W_\infty(0,A)$ a.s., we further obtain that
$\IE[W_\infty(0)] > \IE[W_\infty(0,A)] > 0$ and hence, by
Proposition \ref{prop:StrongWeakDisorderIntro}, that $\b_c > 0$.

Turning to the $L^2$ estimate, as in the proof of Proposition \ref{prop:L2viaChaos} we have,  letting $\mu_2=\mu_2(\b) = e^{\Lambda_2(\b)} -1$,
\begin{align*}
&\IE\left[ W_\infty(0,A)^2 \right]\\
& = 1 + \sum_{k=1}^\infty \mu_2^k \sum_{1\leq n_1<\dots < n_k} \DE_0^{\otimes 2} \left[ \mathbf{1}_{S_{n_1}=S'_{n_1},\dots, S_{n_k} = S'_{n_{k}}} \mathbf{1}_{A(S)} \mathbf{1}_{A(S')}  \right],
\end{align*}
with
\begin{align*}
&\sum_{1\leq n_1<\dots < n_k} \DE_0^{\otimes 2} \left[ \mathbf{1}_{S_{n_1}=S'_{n_1},\dots, S_{n_k} = S'_{n_{k}}} \mathbf{1}_{A(S)} \mathbf{1}_{A(S')}  \right]\\
& = \sum_{1\leq n_1<\dots < n_k} \left(\DE_0^{\mathbb Z^d}\right)^{\otimes 2} \left[ \mathbf{1}_{S_{n_1}=S'_{n_1},\dots, S_{n_k} = S'_{n_{k}}} \mathbf{1}_{A(S)} \mathbf{1}_{A(S')} \right]\\
& \leq \sum_{1\leq n_1<\dots < n_k} \left(\DE_0^{\mathbb Z^d}\right)^{\otimes 2} \left[ \mathbf{1}_{S_{n_1}=S'_{n_1},\dots, S_{n_k} = S'_{n_{k}}}\right].
\end{align*}
Therefore, we obtain by summing that, for $\b$ small,
since $d\geq 4$,
\[\IE\left[ W_\infty(0,A)^2 \right] \leq \IE\left[ \left(W_\infty^{\mathbb{Z}^d}(0)\right)^2 \right] <\infty,\]
where 
 $W_\infty^{\mathbb{Z}^d}(0)$ denotes $W_\infty( 0)$ for the polymer on $\Z^d$/SRW. 
 
 We turn to the last assertion of the theorem. By Corollary \ref{cor:Liouville}, it is enough to show that $G$ satisfies Liouville's property. Let $h$ be a harmonic function on $G$. By harmonicity, $h=h_k$ on the $k$-th pipe. Thus,
$h$ restricted to $\mathbb Z^d$ is again harmonic. Since $\mathbb Z^d$ satisfies the Liouville property,
it follows that so does $G$.
\end{proof}

\section{Polymers on tree structures}
\label{sec-5}
Throughout this section
we take $S$ to be the $\lambda$-biased
random walk on either the
Galton-Watson trees
with offspring distribution $(p_k)_{k\geq 0}$ such that $m = \sum k p_k >1$
(conditioned on non-extinction if $p_0>0$), or on the canopy tree,
see
Sections \ref{sec:GWtree} and \ref{sec:canopyTree}
for definitions.
We write
$D_n=\{v\in V: d(v,o)=n\}$ for vertices at level $n$ of the tree.
In the case of the Galton-Watson tree,
expectations and probabilities with respect to the randomness of the  tree
will be denoted by $\DE_\mu$ and $\mu$ respectively.
When $p_0>0$, we write $q=\mu(\text{extinction})$ and
$\mu_{0}(\cdot) = \mu(\cdot |\text{ non-extinction})$.

\subsection{The positive recurrent Galton-Watson tree ($\mathbf{\lambda > m}$)} \label{sec:posRecGW}
Let $(\mathcal T,(S_k))$ be the walk on the Galton-Watson tree $\mathcal T$ with parameters $\lambda > m>1$, as defined in Section \ref{sec:GWtree}.
\begin{theorem} \label{th:posRecGW}
Assume $\lambda>m$. Then, almost surely on the realization of $\mathcal T$,
strong disorder always holds ($\b_c=0$). Moreover,
\begin{enumerate}[label=(\roman*)]
\item If the tree is $m-1$ regular (i.e. ${p_m=1}$ for some integer $m$), then \textbf{very} strong disorder always
  holds, i.e. $\bar{\b_c}(\mathcal T) = 0$, a.s.
\item More generally, if $\sup \{{d}:p_d >0\} < \lambda$, then $\mu_0$-a.s,
  $\bar{\b_c}(\mathcal T) = 0$.
\item If $\DE_\mu[|D_1|\log^+ |D_1|]<\infty$, $p_0>0$  and
  there exists some $d>\lambda$ such that $p_d>0$, then
  $\bar{\b_c}(\mathcal T) > 0$, $\mu_0$-a.s. 
\end{enumerate}
\end{theorem}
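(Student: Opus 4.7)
The equality $\b_c = 0$ is immediate from Theorem \ref{th:strongDesForStrongRec} since, by \cite{lyons1990random}, the $\lambda$-biased walk on $\mathcal T$ is positive recurrent when $\lambda > m$. The three subcases address $\bar\b_c$. For (i) and (ii), the plan is to establish that return times to $o$ admit exponential moments, and then conclude $\bar\b_c = 0$ via \cite{CaGuHuMe04} (cited in the remark after Theorem \ref{th:strongDesForStrongRec}). In (i), the tree is deterministic and $|S_k|$ is a biased random walk on $\mathbb Z_+$ with downward probability $\lambda/(\lambda+m) > 1/2$, for which exponential return tails are classical. In (ii), with $\bar d := \sup\{d: p_d > 0\} < \lambda$, the downward transition probability at every non-root vertex is bounded by $\bar d/(\bar d + \lambda) < 1/2$, so $|S_k|$ is pathwise dominated by a random walk with bounded steps and strictly negative drift, yielding exponential returns uniformly in $\mathcal T$. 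Alternatively, one can refine the change-of-measure argument of Theorem \ref{th:strongDesForStrongRec} by replacing $\delta_n = n^{-1/2}$ with a small constant $\delta$: exponential return-time tails ensure that the drift term dominates the Radon-Nikodym correction linearly in $n$, directly giving $\bar p(\b) < 0$ for all $\b > 0$.

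For (iii), the objective is a sub-exponential lower bound on $W_n(o,\b)$ for $\b$ small. By hypothesis, $p_d > 0$ for some $d > \lambda$ and the Kesten-Stigum condition $\DE_\mu[|D_1|\log^+|D_1|] < \infty$ holds. The plan is to construct, $\mu_0$-a.s., a vertex $v$ at finite distance from $o$ and a subtree $\tilde{\mathcal T} \subset \mathcal T$ rooted at $v$ such that the $\lambda$-biased walk restricted to $\tilde{\mathcal T}$ has positive escape probability $h(v) = \DP_v(\text{never leave } \tilde{\mathcal T}) > 0$ and, via the Doob $h$-transform, becomes a transient Markov chain on $\tilde{\mathcal T}$ with uniform exponential drift. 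This construction proceeds by a sub-branching-process percolation that selects vertices with at least $d$ children in $\mathcal T$ and retains $d$ of them; the Kesten-Stigum hypothesis guarantees that the selected process produces an infinite rich subtree with positive $\mu$-probability, upgraded to $\mu_0$-a.s.\ after passage to a descendant. Two independent copies of the $h$-transformed walk then have intersection count with exponential moments of every order, so Birkner's criterion (Theorem \ref{th:Birkner}) applies to the corresponding polymer for $\b$ small, yielding uniform integrability and an a.s.\ positive $L^1$ limit $\hat W_\infty(v) > 0$ of the normalized partition function $\hat W_n(v)$. Bounding $W_n(o)$ from below by the contribution of walks that travel from $o$ to $v$ in bounded time and then remain in $\tilde{\mathcal T}$, and relating this to $\hat W_n(v)$ via the $h$-transform together with concentration (Theorem \ref{th:ConcentrInequality}), gives $\liminf_n (1/n) \IE \log W_n(o) \geq 0$, hence $\bar\b_c \geq \b > 0$.

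The main obstacle in (iii) is the joint construction of $\tilde{\mathcal T}$ together with the uniform control on $h$ and on the intersection moments of the $h$-transformed walk. Comparison with the $\lambda$-biased walk on the regular $(d+1)$-ary tree, where the relevant drift and intersection estimates are classical via branching random walk theory \cite{Shi-StFl}, should suffice; the Kesten-Stigum hypothesis is used precisely in establishing that the sub-branching process is supercritical and its underlying Biggins martingale is uniformly integrable, so that the resulting subtree has enough branching to support both a positive escape probability and Birkner's criterion for small $\b$.
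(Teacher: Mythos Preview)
Your treatment of the preliminary assertion $\b_c=0$ and of parts (i) and (ii) matches the paper's: positive recurrence plus Theorem \ref{th:strongDesForStrongRec} gives $\b_c=0$, and for (i)--(ii) exponential tails for the return time to $o$ follow from domination by a biased walk on $\mathbb Z_+$, after which \cite{CaGuHuMe04} yields $\bar\b_c=0$.

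Your approach to (iii), however, contains a genuine obstruction. You propose to locate an \emph{infinite} subtree $\tilde{\mathcal T}\subset\mathcal T$ on which the $\lambda$-biased walk is transient (positive escape probability), build it by percolating down to an embedded $d$-ary subtree, and then apply Birkner's criterion to the $h$-transformed walk. But such a subtree cannot exist. By Lyons' theorem \cite{lyons1990random}, the branching number of $\mathcal T$ equals $m$ almost surely on non-extinction, and branching number is monotone under taking subtrees; since $d>\lambda>m$, no subtree of $\mathcal T$ has branching number exceeding $\lambda$, and in particular no infinite $d$-ary subtree can be embedded. Equivalently, the $\lambda$-biased walk is recurrent on \emph{every} subtree of $\mathcal T$, so $h(v)=0$ for any candidate $\tilde{\mathcal T}$, and the $h$-transform construction collapses. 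Your appeal to Kesten--Stigum is also misplaced: that condition controls the limit of $|D_n|/m^n$ for the original tree, not supercriticality of any sub-branching process.

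The paper circumvents this by working with \emph{finite} traps rather than infinite transient subtrees. Using $p_0>0$ (which your outline never invokes) together with Kesten--Stigum growth of $|D_{\lfloor(\log n)^c\rfloor}|$, one finds, for each large $n$, a $(d+1)$-regular subtree $\mathcal R_n$ of depth $L=\lfloor\log\log n\rfloor$ whose boundary consists of genuine leaves of $\mathcal T$, rooted at depth $\lfloor(\log n)^c\rfloor$. Because $d>\lambda$, the walk inside $\mathcal R_n$ drifts toward the leaves and needs of order $(d/\lambda)^L=(\log n)^{\log(d/\lambda)}$ attempts to escape; hence the event $F_n$ that the walk runs straight into $\mathcal R_n$ and stays there for time $n$ has probability at least $\exp(-c_1 n/(\log n)^{c_2})$, which is sub-exponential. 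A second-moment bound on $Y_n=\DE_o[e_n\mathbf 1_{F_n}]$, obtained by comparing intersections inside $\mathcal R_n$ with those on the canopy tree (Theorem \ref{th:L2regionCanopy}), combined with Paley--Zygmund and the concentration inequality of Theorem \ref{th:ConcentrInequality}, then rules out very strong disorder for small $\b$.
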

\begin{remark}
Point (iii) shows in particular that there are positive recurrent walks such that very strong disorder does not always hold.
\end{remark}
\begin{proof}
The fact that $\b_c=0$ comes from Theorem \ref{th:strongDesForStrongRec}.
In order to see (i), let $X_n=|S_n|$.
Then $(X_n)$ is a random walk on $\mathbb Z_+$ with a bias
towards $0$ which is constant on each point of $\mathbb Z_+$.
Therefore, the return time to $0$ of $(S_n)$ admit exponential moments
and by \cite{CaGuHuMe04}, very strong disorder always holds.
Point (ii) holds since the problem can be reduced to a random walk $(Y_n)$ on half a line with inhomogeneous bounded bias towards the root. By a standard coupling argument, the walk $(X_n)$ will stochasticaly dominate the walk $(Y_n)$, which then implies, following from (i), exponential moments for the return time.

We next prove (iii). Letting $Z = \lim_{n} m^{-n} |D_n|$, it
follows from the Kesten-Stigum theorem \cite{KestenStigum66} that
the condition  $\DE_\mu[|D_1|\log^+ |D_1|]<\infty$
implies that $\mu_{0}(Z > 0)=1$.
Thus, for $\varepsilon >0$ small  and any $c>0$,
\begin{equation} \label{eq:growthTree}
  \text{if}\; A_n=\{|D_{\lfloor (\log n)^c \rfloor}| \geq (m-\varepsilon)^{\lfloor (\log n)^c \rfloor}\}, \text { then}\; \mu_0(\cup_{m\geq 1} \cap_{n\geq m}
  A_n)=1.
\end{equation}
In what follows, we write $l=l_n=\lfloor (\log n)^c\rfloor$.
Now fix any $d>\lambda$ such that $p_d>0$ and denote by $\mathcal R_n$ the $(d+1)$-regular tree of depth $L = \lfloor \log \log n \rfloor$. With a slight abuse of notation we also let
\[B_n=\{\text{There is at least one } \mathcal R_n \text{ rooted at one of the vertices of } D_{l}\}.\]
On the event $A_n$, there are at least $(m-\varepsilon)^l$
vertices in $D_{l}$
that may independently spawn $\mathcal R_n$ with probability $p_d^{Q} \, p_0^{d^{L}}$, where $Q=1+d+\dots + d^{L -1}$. Therefore,
\begin{align*}
  \sum_{n\geq 0} \mu_0(A_n,B_n^c) & \leq (1-q)^{-1}
  \sum_{n\geq 0} \left(1- p_d^{Q} \, p_0^{d^{L}}\right)^{(m-\varepsilon)^l
 }<\infty,
\end{align*}
where the finiteness of the last sum comes from
taking $c=c(d)$ large enough. By the
Borel-Cantelli lemma and \eqref{eq:growthTree}, we thus obtain that
\begin{equation} \label{eq:ThicktreeEv}
  \mu_0({\cup_{m\geq 1}} \cap_{n\geq m}B_n) =1.
\end{equation}


We now fix a realization of the infinite tree $\mathcal T$
and $n$ large so that
$B_n$  holds, and pick an $\mathcal R_n$ corresponding to that event.  (Such an $n$ exists $\mu_0$-almost surely by \eqref{eq:ThicktreeEv}). 
Introduce the event
\begin{align*}
  F_n(S) &:= \{ S_l\in \mathcal R_n,  \\
  &
|S_{k+1}|=|S_k|+1\; \text{for}\; k=1,\ldots l+L,
|S_k|>l\; \text{for}\;  k=l+L+1,\ldots, n\}.
\end{align*}
In words, the event $F_n(S)$ means that the random walk goes directly
to the bottom of one of the $\mathcal R_n$, and does not
reach the root of that $\mathcal R_n$ before time $n$.
What we show next is that the event $F_n(S)$
has  sub-exponential probability, namely
that there exists positive constants $c_1,c_2$
that depend only on $\lambda$ and $d$, such that for all $n$ large enough,
\begin{equation} \label{eq:sub-expproba}
\DP_{o}(F_n) \geq \exp\left\{-c_1\frac{n}{(\log n)^{c_2}}\right\}.
\end{equation}
To prove \eqref{eq:sub-expproba},
we first observe that on $B_n$,
the event
$\{S_l\in \mathcal R_n,  |S_{k+1}|=|S_k|+1\; \text{for}\; k=1,\ldots l+L\}$
has  probability bounded from below by $Ce^{-(l+L)}$.
Once at the bottom, the probability of reaching the top of $\mathcal R_n$
before returning  to the bottom of $\mathcal R_n$ equals
the probability of reaching $L$ before reaching $1$ for
a SRW on $[1,L]\cap \mathbb N$ started at $2$,
with probability $\lambda/(\lambda+d)$ to go right and $d/ (\lambda+d)$
to go left. Since $d>\lambda$, this probability is
equivalent as $L\to\infty$ to $c_3 (\lambda/d)^{L}$,
with $c_3=c_3(\lambda,d)$. Therefore, the probability starting from the bottom
of $\mathcal R_n$ not to reach the root of $\mathcal R_n$ at all before time $n$ is bounded from below by (recall that $L=\lfloor \log \log n \rfloor$)
\[\left(1-c_2 (\lambda/ d)^{L} \right)^n \sim e^{-c_1\frac{n} {(\log n)^{\log
(d/\lambda)}}},\]
for some constant $c_1=c_1(d,\lambda)$, where we recall that
$L=\lfloor \log \log n \rfloor$. Combining these estimates with the Markov property
leads to \eqref{eq:sub-expproba}.

We can now turn to the conclusion of the proof. Define
$Y_n:=\DE_{o}[e_n F_n]$. We will show below that for $\b>0$ small enough,
\begin{equation} \label{eq:bound2ndMomYn}
\exists \b>0,\quad \IE Y_n^2 \leq e^{\Lambda_2(\b)(l+L)}.
\end{equation}
Assuming \eqref{eq:bound2ndMomYn}, we have
by the Paley-Zygmund inequality that
\begin{equation} \label{eq:paley-zygForYn}
\IP\left(Y_n \geq \frac{1}{2} \IE[Y_n] \right) \geq \frac{1}{4} \frac{\IE[Y_n]^2}{\IE Y_n^2}\geq  \frac 1 4
\exp\left(-3c_1\frac{n}{(\log n)^{c_2}}\right),
\end{equation}
for $n$ large enough, where we used \eqref{eq:sub-expproba} and that
$\IE[Y_n] = \DP_{o}(F_n)$.
Since $W_n \geq Y_n$, we further obtain from
\eqref{eq:paley-zygForYn}
that there exists
a positive sequence $\alpha_n$ satisfying $\alpha_n = o(n)$
as $n\to\infty$, such that
\begin{equation} \label{eq:sub-expWn}
\IP(\log W_n \geq -\alpha_n) \geq C e^{-c' \alpha_n},
\end{equation}
for some positive constants $C$ and $c'$. On the other hand,
if very strong disorder holds for
$\b>0$, that is if
 $\IE[\log W_n] \leq -\varepsilon n$ for $n$ large enough and some
 $\varepsilon \in(0,1)$,
 then by the concentration inequality \eqref{eq:ConcentrInequality},
we obtain that
\[\IP\left(\log W_n \geq - \frac{\varepsilon}{2}n\right) \leq  e^{-C'(\b) n \varepsilon^2},
\]
which cannot hold in the same time as \eqref{eq:sub-expWn}.
and hence very strong disorder does not hold for such $\b$.

We now come back to the proof of \eqref{eq:bound2ndMomYn}. We have,
\[\IE Y_n^2 = \DE_{o,o}^{\otimes 2} \left[e^{\Lambda_2(\b) \sum_{k=1}^n \mathbf{1}_{S_k = S'_k}} F_n(S) F_n(S')\right].
\]
Since on $F_n$, both walk go directly to the bottom of some $\mathcal R_n$ as above (this takes $(\log n)^c$ steps), we have that for $x_0$ which is a leaf of  $\mathcal R_n$ that
\[\IE Y_n^2 \leq e^{\lambda_2(\b)(l+L)} \DE^{\otimes 2}_{x_0,x_0} \left[e^{\lambda_2 \sum_{k=1}^n \mathbf{1}_{S_k = {S}'_k}} \mathbf{1}_{S \text{ and }  S' \text{ do not exit } \mathcal R_n \text{ before time } n}\right].
\]
Then, observe that the number of intersections
of the walks before they leave $\mathcal R_n$ is stochastically dominated
by the total number of
intersections that would occur in any
infinite tree having $\mathcal R_n$ attached to some vertex.
In particular, \eqref{eq:bound2ndMomYn} holds
if we can find such an infinite tree $T$ for which $\b_2>0$.
We will choose $T$
to be the canopy tree  with parameters $m=d$ and $\lambda$, see
Section \ref{sec:canopyTree} for definitions.
The required finiteness of $\beta_2$ for the canopy tree
now follows from
Theorem \ref{th:L2regionCanopy} below.
\end{proof}
\subsection{The transient Galton-Watson tree ($\mathbf{\lambda < m}$)}
\label{sec:transGW}
Let $(\mathcal T,(S_k))$ be the walk on the Galton-Watson tree $\mathcal T$ with parameters $\lambda < m$, as defined in Section \ref{sec:GWtree}.
\begin{theorem} \label{th:transientGW}
Assume that $m>1$, $\lambda<m$, and $p_0=0$.
Then,
almost surely on the realization of $\mathcal T$, $\b_c>0$. If further
$p_1>0$, then
$\b_2=0$.
\end{theorem}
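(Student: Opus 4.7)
I will verify Birkner's criterion (Theorem \ref{th:Birkner}) by showing that for $\beta>0$ small enough one has
\[\DE_{o}^{\otimes 2}\bigl[e^{\Lambda_2(\beta)\, N_\infty(S,S')}\,\bigm|\,\mathcal F_S\bigr] < \infty \quad \text{a.s.},\]
where $N_\infty(S,S')=\sum_{k\ge 1}\mathbf 1_{S_k=S'_k}$. Since $\lambda<m$ and $p_0=0$, the $\lambda$-biased walk on $\mathcal T$ is transient with positive speed $v>0$ and admits a regeneration structure whose inter-regeneration times have finite exponential moments (Lyons--Pemantle--Peres; Dembo--Gantert--Peres--Zeitouni). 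In particular $|S_k|\sim vk$ and $|S'_k|\sim vk$, so meetings can only occur in level sets whose annealed size grows like $m^{vk}$. A factorial-moment estimate bounded on one side by a geometric return probability of $S'$ to neighborhoods of $S$, and on the other by positive-speed concentration, will then give the required exponential moment for small $\beta$, hence $\beta_c>0$.

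\textbf{Plan for $\beta_2=0$ when $p_1>0$.} I will apply the refined criterion of Theorem \ref{th:ConditionForNoL2bis}. A standard Borel--Cantelli argument on i.i.d.\ blocks of offspring variables shows that when $p_0=0$ and $p_1>0$, the tree $\mathcal T$ almost surely contains \emph{arbitrarily long pipes}, i.e.\ geodesic segments $v_0v_1\cdots v_{2L}$ in which each interior $v_i$ has a unique child. Taking $x$ to be the midpoint of such a pipe and any $y\in B(x,L/2)$, the walk starting at $y$ performs a genuine $\lambda$-biased random walk on the interval $\{-L,\dots,L\}\subset\mathbb Z$ until it exits $B(x,L)$.

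On that interval I evaluate the restricted intersection count of two independent copies $S,S'$ started at $y$. If $\lambda\ne 1$, the exit time from $B(x,L)$ is of order $L$ by gambler's ruin with drift $(\lambda-1)/(\lambda+1)$; the difference $S_k-S'_k$ is a centered random walk on $2\mathbb Z$ whose local-limit density at $0$ is of order $k^{-1/2}$, so summing collision probabilities up to time $cL$ yields an expected intersection count of order $\sqrt L$. This lower bound is uniform in $y\in B(x,L/2)$: even when $y$ lies on the drift-side boundary of $B(x,L/2)$, the walk still has $\Omega(L)$ steps before exiting $B(x,L)$, since $B(x,L/2)$ is at distance $L/2$ from $\partial B(x,L)$. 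For $\lambda=1$ the exit time is instead of order $L^2$ and the same LCLT computation gives intersection count of order $L$. In either case, letting $L\to\infty$ verifies \eqref{bloc58}, so Theorem \ref{th:ConditionForNoL2bis} yields $\beta_2=0$.

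\textbf{Main obstacle.} The delicate step is Birkner's condition in Part 1: the estimate is \emph{quenched} in $S$, so annealed bounds on $\IE[N_\infty]$ do not suffice. One must control the conditional moments of $N_\infty$ for $\DP_o$-typical trajectories of $S$, jointly exploiting the regeneration structures of both $S$ and $S'$ and producing an intersection estimate analogous to the one underlying $\beta_2>0$ on $\mathbb Z^d$ for $d\ge 3$, but here adapted to the tree geometry and applied only after conditioning on one trajectory.
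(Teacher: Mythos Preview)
Your plan for $\b_c>0$ is the paper's approach: both invoke Birkner's criterion (Theorem~\ref{th:Birkner}) together with positive speed and the regeneration structure of the $\lambda$-biased walk. What is missing from your sketch is the mechanism that converts these ingredients into the quenched bound. The paper proceeds as follows: (i) for every vertex $v\in D_n$ one has $\DP_o(\tau_v<\infty)\le e^{-cn}$ uniformly (Lemma~\ref{lem:proba_reaching_GW}); (ii) $GW\times\DP$-a.s.\ there is a regeneration level $\ell_0\in[\ell/2,\ell]$ for $S$, so any collision beyond level $\ell$ forces $S'$ to hit one \emph{specific} vertex on the ray of $S$ at level $\ell/2$, giving $\DP^{\otimes 2}(\mathcal V\cap\bigcup_{m\ge\ell}D_m\neq\emptyset\mid\mathcal F_S)\le e^{-c\ell/2}$ (Lemma~\ref{lem:donotmeet}); (iii) by positive speed, $|\mathcal N|$ is bounded by the time $S$ spends at levels $\le L:=\sup\{\ell:\mathcal V\cap D_\ell\neq\emptyset\}$, which is $O(L)$, so $\DE^{\otimes 2}[e^{\Lambda_2|\mathcal N|}\mid\mathcal F_S]\le\sum_k e^{\Lambda_2 c_1 k/v}\,e^{-c_2 k}<\infty$ for small $\b$. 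Your ``main obstacle'' paragraph correctly names the difficulty but does not supply step~(ii); the reduction to a single-vertex hitting estimate via a regeneration of $S$ is the crux.

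For $\b_2=0$, the paper just cites Proposition~\ref{cor:NoL2forLongPipes}. Your route via Theorem~\ref{th:ConditionForNoL2bis} is fine when $\lambda=1$, but your argument for $\lambda\ne 1$ has a gap. Condition~\eqref{bloc58} requires the expected number of collisions \emph{located in $B(x,L/2)$}, taken as an infimum over starting points $y\in B(x,L/2)$. You only check that both walks remain in $B(x,L)$ for $\Omega(L)$ steps and that $\DP(S_k=S'_k)\asymp k^{-1/2}$; you do not verify that the collisions occur inside $B(x,L/2)$. When $\lambda\ne 1$ and $y$ lies on the drift-side boundary of $B(x,L/2)$, both biased walks leave $B(x,L/2)$ immediately: by a large-deviation bound $\DP_y(S_k\in B(x,L/2))\le e^{-c'k}$, so $\sum_{k\ge 1}\DP(S_k=S'_k,\,S_k\in B(x,L/2))=O(1)$ uniformly in $L$, and the infimum in~\eqref{bloc58} stays bounded. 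Hence Theorem~\ref{th:ConditionForNoL2bis} does not apply as you use it. (Note, incidentally, that Proposition~\ref{cor:NoL2forLongPipes} as stated is for SRW; its proof relies on $\sum_k p_k^{\mathbb Z}(0,0)^2$ diverging, which fails for a biased walk on $\mathbb Z$.)
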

\noindent
We remark that the particular case $\lambda=1$, which corresponds to the SRW,
is covered by Theorem \ref{th:transientGW}.
\begin{proof}
  That $\b_2=0$ when $p_1>0$ follows from Proposition \ref{cor:NoL2forLongPipes}
  upon observing that there exist (due to $p_1>0$) arbitrarily long pipes.
  We thus only need to prove that $\b_c>0$. Throughout the proof, we write $GW$ for the law of $\mathcal{T}$. Recall that for $v\in V$,
  $\tau_v=\inf\{k>0: S_k=v\}$. A preliminary step in the following lemma.
  \begin{lemma}\label{lem:proba_reaching_GW}
    There exists $c\in(0,\infty)$ such that $GW$-a.s.,
    there exists $N=N(\mathcal T)>0$ such that for all $n\geq N$,
\begin{equation} \sup_{v\in D_n} \DP(\tau_v < \infty) \leq  e^{-cn}.
\end{equation}
\end{lemma}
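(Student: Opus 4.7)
The plan is to reduce $\DP_o(\tau_v<\infty)$ to a product of one-step hitting probabilities along the geodesic from $o$ to $v$, dominate each factor by an expression in the escape probabilities of the off-spine subtrees, and then obtain the uniform bound over $v\in D_n$ via a spine (many-to-one) computation.

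Let $o=v_0,v_1,\dots,v_n=v$ be the geodesic from the root to $v$. The strong Markov property at the successive hitting times of $v_1,\dots,v_n$ gives
\[
\DP_o(\tau_v<\infty)\;=\;\prod_{k=0}^{n-1}\alpha_k,\qquad \alpha_k:=\DP_{v_k}(\tau_{v_{k+1}}<\infty).
\]
For every $u\neq o$, set $A(u):=\DP_u(\tau_{\mathrm{parent}(u)}=\infty)$; since $\lambda<m$ and $p_0=0$ make every subtree transient, $A(u)>0$ a.s. A first-step analysis at $v_k$, splitting according to a jump to $v_{k+1}$, to the parent $v_{k-1}$ (when $k\geq 1$), or to an off-spine sibling of $v_{k+1}$, yields, after solving for $\alpha_k$,
\[
\alpha_k\,=\,\bigl(1+\lambda(1-\alpha_{k-1})\mathbf{1}_{k\geq 1}+T_k(v)\bigr)^{-1},\qquad T_k(v)\,:=\!\!\sum_{\substack{u\text{ child of }v_k\\ u\neq v_{k+1}}}\!\!A(u).
\]
Dropping the non-negative parent-return term $\lambda(1-\alpha_{k-1})$ yields the clean bound
\[
\DP_o(\tau_v<\infty)\,\leq\,\prod_{k=0}^{n-1}(1+T_k(v))^{-1}.
\]

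Next I apply the many-to-one (spine) formula for Galton--Watson trees: for any $\theta>0$,
\[
\DE_\mu\!\!\sum_{v\in D_n}\prod_{k=0}^{n-1}(1+T_k(v))^{-\theta}\;=\;m^n\bigl(\DE_\mu(1+U)^{-\theta}\bigr)^{n},
\]
where on the size-biased spine each vertex has offspring distribution $\hat p_k=kp_k/m$, the spine child is chosen uniformly, and, conditionally on the size-biased degree $\hat d$, the variable $U=\sum_{i=1}^{\hat d-1}A_i$ is a sum of $\hat d-1$ i.i.d.\ copies of $A$ coming from independent $GW$ subtrees. The classical independence of off-spine subtrees across spine levels makes the $T_k$ independent along the spine, whence the product factors into $n$ i.i.d.\ terms.

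Finally I argue that $m\,\DE_\mu(1+U)^{-\theta}<1$ for $\theta$ large enough. As $\theta\to\infty$, $(1+U)^{-\theta}\to\mathbf{1}_{U=0}=\mathbf{1}_{\hat d=1}$ a.s., hence $\DE_\mu(1+U)^{-\theta}\to\hat p_1=p_1/m$ by dominated convergence. Since $m>1$ and $p_0=0$ force $p_1<1$, this limit is strictly less than $1/m$, so for $\theta$ sufficiently large one can write $m\,\DE_\mu(1+U)^{-\theta}=:e^{-\eta}$ with $\eta>0$. Markov's inequality and the Borel--Cantelli lemma then give, $GW$-a.s.\ for every $n$ large enough,
\[
\sup_{v\in D_n}\DP_o(\tau_v<\infty)^{\theta}\;\leq\;\sum_{v\in D_n}\prod_{k=0}^{n-1}(1+T_k(v))^{-\theta}\;\leq\;e^{-\eta n/2},
\]
yielding the lemma with $c=\eta/(2\theta)$. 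The main technical point is the first-step analysis leading to the clean bound $\alpha_k\leq(1+T_k)^{-1}$; the subsequent many-to-one computation is then standard, its crucial input being the strict inequality $\hat p_1=p_1/m<1/m$, which is automatic from the hypotheses $m>1$ and $p_0=0$.
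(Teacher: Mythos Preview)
Your proof is correct and gives a self-contained argument, whereas the paper's proof is essentially a one-line reduction to an external lemma.

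Both arguments start from the same factorization
$\DP_o(\tau_v<\infty)=\prod_{k=0}^{n-1}\DP_{v_{k}}(\tau_{v_{k+1}}<\infty)$
along the geodesic. The paper then invokes \cite[Lemma~2.2]{dembo2002large}, which states that for suitable $\delta,\alpha>0$, $GW$-a.s.\ and uniformly over $v\in D_n$, at least $\alpha n$ of these factors are below $1-\delta$; the product bound $(1-\delta)^{\alpha n}$ follows immediately. The heavy lifting (a uniform-in-$v$ ray estimate) is thus outsourced.

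Your route is genuinely different and more explicit. The first-step analysis yielding $\alpha_k\le(1+T_k(v))^{-1}$ is clean and correct (the parent-return term drops with the right sign, and the edge case $k=0$ works under any of the usual root conventions). The many-to-one computation is standard and applies because the off-spine subtrees are independent $GW$ trees along the spine, so the $T_k$'s become i.i.d. The key numerical input $\hat p_1=p_1/m<1/m$ follows from $m>1$, $p_0=0$, and the positivity $A>0$ a.s.\ is indeed a consequence of transience of the $\lambda$-biased walk on every (infinite) $GW$ subtree when $\lambda<m$.

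What each approach buys: the paper's proof is two lines but depends on a nontrivial cited lemma (whose proof in \cite{dembo2002large} uses related large-deviation ideas for rays). Your proof is longer but self-contained, and the moment method with Borel--Cantelli makes the constant $c=\eta/(2\theta)$ explicit in terms of the size-biased offspring law and the escape probability distribution.
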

\begin{proof} Let $\delta>0$.
For $w\in V$ and $j$ a descendant of $w$, define
\[A(w,j)=\mathbf{1}_{\DP_w(\tau_j=\infty)>\delta}.\]
For $v\in D_n$, let $[x_0=o,\dots,x_n=v]$
be the unique simple
path connecting the root to $v$.
It follows from \cite[Lemma 2.2]{dembo2002large}
that for $\delta>0$ small enough, there exist
$\alpha>0$ and $N_0(\mathcal T)$ such that for $n \geq N_0(\mathcal T)$,
\begin{equation*}\label{eq:unifProbaRay}
GW-a.s,\quad \inf_{v\in D_n} n^{-1} \sum_{i=1}^n A(x_{i-1},x_i) \geq \alpha.
\end{equation*}
Fix $v\in D_n$. By the Markov property, we have that for $n\geq
N_0(\mathcal T)$,
\[\DP(\tau_v<\infty) = \prod_{i=1}^n \DP_{x_{i-1}}(\tau_{x_{i}} <\infty)
\leq  (1-\delta)^{-\alpha n},\]
with $C=C(\tau)$ and where the last bound holds uniformly over $v\in D_n$.
\end{proof}
We also need the following lemma.
In the statement, $S,S'$ denote two independent copies of $(S_k)$ on $\mathcal{T}$. Let $\mathcal {F}_S$ denote the
$\sigma$-algebra generated by $S$. Also, let
\[\mathcal{N}=\{k\geq 0: S_k=S_k'\}, \quad \mathcal{V}=\{v\in V: S_k=v
\; \text{for some $k\in \mathcal{N}$}\}\]
denote the intersection times and locations of
$S$ and $S'$.
\begin{lemma}
  \label{lem:donotmeet}
  There exists $c_2\in(0,\infty)$ such that $GW$-a.s.,
  there exists $\ell_0=\ell_0(\mathcal{T},S)$ so that
\begin{equation}\label{eq:donotmeet}
  \text{for all $\ell>\ell_0$},\quad
  \DP^{\otimes 2}\left(\mathcal{V}\cap (\cup_{m\geq \ell} D_m)\neq \emptyset
  \middle| \mathcal{F}_S\right) \leq e^{-c_2\ell}, \quad {GW-}a.s.
\end{equation}
\end{lemma}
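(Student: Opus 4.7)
The plan is to exploit that the walks on the Galton–Watson tree are transient with positive speed together with the exponential bound on hitting probabilities of deep vertices proved in Lemma \ref{lem:proba_reaching_GW}. Concretely, conditioning on $\mathcal F_S$ and using a union bound,
\[
\DP^{\otimes 2}\!\left(\mathcal V\cap(\cup_{m\geq\ell}D_m)\neq\emptyset\mid \mathcal F_S\right)
\leq \sum_{k:\,|S_k|\geq\ell}\DP(S'_k=S_k)\leq \sum_{k:\,|S_k|\geq\ell}\DP(\tau_{S_k}<\infty),
\]
since $\{S'_k=S_k\}\subset\{\tau_{S_k}<\infty\}$. On the event $\{|S_k|\geq\ell\}$ with $\ell\geq N(\mathcal T)$, Lemma \ref{lem:proba_reaching_GW} yields $\DP(\tau_{S_k}<\infty)\leq e^{-c|S_k|}$, so the conditional probability is bounded by $\sum_{k:\,|S_k|\geq \ell}e^{-c|S_k|}$.

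Next, I would invoke the law of large numbers for the transient biased walk on a supercritical Galton–Watson tree (with $\lambda<m$ and $p_0=0$): by Lyons–Pemantle–Peres \cite{LyonsPemantlePeresBiased}, there is a deterministic speed $v=v(\lambda,(p_k))>0$ such that $|S_k|/k\to v$ as $k\to\infty$, $GW\otimes\DP$-a.s. Hence there exists a random $k_0=k_0(\mathcal T,S)<\infty$ such that $|S_k|\geq vk/2$ for every $k\geq k_0$. Note also that any $k$ with $|S_k|\geq \ell$ necessarily satisfies $k\geq \ell$, because the walk moves at most one edge per step.

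Putting these together, set $\ell_0:=\max(N(\mathcal T),k_0(\mathcal T,S))$, which is finite $GW$-a.s. For $\ell>\ell_0$, every $k$ appearing in the sum satisfies $k\geq\ell\geq k_0$, so $|S_k|\geq vk/2$, and therefore
\[
\sum_{k:\,|S_k|\geq \ell}e^{-c|S_k|}\leq\sum_{k\geq \ell}e^{-cvk/2}\leq \frac{e^{-cv\ell/2}}{1-e^{-cv/2}}\leq e^{-c_2\ell}
\]
for any $c_2\in(0,cv/2)$, up to enlarging $\ell_0$ to absorb the prefactor. This yields the claim with $c_2 = cv/4$, say.

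The conceptual input is the positivity of the speed, i.e.\ the Lyons–Pemantle–Peres ballisticity theorem for $\lambda<m$ and $p_0=0$, which is the step that uses nontrivial material beyond Lemma \ref{lem:proba_reaching_GW}; everything else is a union bound and geometric summation. A minor care point is that a priori one might have $k\gg |S_k|$ for small $k$, but the constraint $|S_k|\geq\ell$ forces $k\geq\ell$, and once $\ell\geq k_0$ the LLN regime takes over, so no separate "bulk" estimate is needed.
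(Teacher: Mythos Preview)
Your argument is correct. The paper takes a different route: rather than a union bound over all times $k$ with $|S_k|\geq\ell$, it appeals to the regeneration structure of the $\lambda$-biased walk (from \cite{dembo2002large}) to find, for each large $\ell$, a regeneration level $\ell_0\in[\ell/2,\ell]$; the corresponding regeneration vertex $h\in D_{\ell_0}$ then acts as a bottleneck through which $S'$ must pass in order to meet $S$ at any level $\geq\ell$, so the conditional probability is controlled by the \emph{single} hitting probability $\DP(\tau_h<\infty)\leq e^{-c\ell/2}$ from Lemma \ref{lem:proba_reaching_GW}. Your approach instead combines Lemma \ref{lem:proba_reaching_GW} with the law of large numbers $|S_k|/k\to v>0$ (which the paper invokes anyway, see \eqref{eq:speedGW}) and a geometric summation. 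This is arguably more direct, as it bypasses the regeneration machinery; the price is that your constant $c_2$ depends on the speed $v$, whereas the paper's argument gives $c_2=c/2$ independently of $v$. Either constant suffices for the application in Theorem \ref{th:transientGW}.
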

\begin{proof}
We say that $\ell$ is a \emph{regeneration level} for $S$ if there is a time $\sigma\geq 0$ such that $|S_k| \geq \ell$ for all $k\geq \sigma$.
It follows from \cite[Lemma 4.2]{dembo2002large}
that  $S$ possesses infinitely many regeneration levels,
whose successive differences are independen and admit exponential moments under
the annealed law $GW\times \DP$.
In particular, this
implies that, for some $c>0$,
\begin{equation} \label{eq:lemma4.2DemboAl}
\IE_{GW}\DP(\text{there is no regeneration level for } S \text{ in } [\ell/2,\ell])\leq e^{-c\ell}.
\end{equation}
Therefore, by the
Borel-Cantelli lemma, $GW\times \DP$ a.s.,
there is at least one regeneration level $\ell_0$
for $S$ in $[\ell/2,\ell]$ for
$\ell\geq \ell_1(\mathcal{T},S)$ large enough.
We continue the proof on that event.

Let $h\in D_\ell$ be the last vertex visited by $S$ at level $\ell/2$ before it regenerates at level $\ell_0$. We have, using Lemma \ref{lem:proba_reaching_GW}
at the second
inequality, that
\begin{align*}
  \DP^{\otimes 2}\left(\mathcal{V}\cap (\cup_{m\geq \ell} D_m)\neq \emptyset
  \middle| \mathcal{F}_S\right) &\leq
 \DP(\tau_{h}(S') < \infty)
 \leq  e^{-c\ell/2}.
\end{align*}
\end{proof}

We return to the proof of
Theorem \ref{th:transientGW}.
By Theorem \ref{th:Birkner},
it is enough to show that for some $\b>0$,
\begin{equation}
  \DE^{\otimes 2}\left[ e^{\Lambda_2(\b) |\mathcal{N}|} \middle| \mathcal{F}_S\right] < \infty, \quad GW\times \DP-a.s.
\end{equation}
It follows from \cite{LyonsPemantlePeresBiased} that
\begin{equation} \label{eq:speedGW} \frac{|S_k|}{k}\to v>0, \quad GW\times \DP-a.s.
\end{equation}
In particular, there is a random variable
$c_1=c_1(S)$ such that for all $j\geq 0$, $S$ does not spend more than
$c_1 v^{-1} k$ time above level $k$. Setting
\[L = \inf\left\{\ell \geq 0 : \mathcal{V}\cap (\cup_{m\geq \ell} D_m)= \emptyset  \right\},\]
we obtain from the
Borel-Cantelli lemma and \eqref{eq:donotmeet} that $L<\infty$,
$GW\times \DP^{\otimes 2}$-a.s., and therefore
%
\begin{align}
  \DE^{\otimes 2}\left[ e^{\Lambda_2 |\mathcal{N}|}
  \middle| \mathcal{F}_S\right] & = \sum_{k\geq 0}
  \DE^{\otimes 2}\left[ e^{\Lambda_2 |\mathcal{N}|}
  \mathbf{1}_{L=k} \middle| \mathcal{F}_S\right]
  \leq  \sum_{k\geq 0} e^{c_1 \Lambda_2  v^{-1} k} \,  \DE^{\otimes 2}\left[  \mathbf{1}_{L=k} \middle| \mathcal{F}_S\right]\nonumber\\
  & \leq e^{\Lambda_2 T_{\ell_0(\mathcal T,S)}}
  +   \sum_{k\geq 0} e^{\lambda_2 c_1 v^{-1} k } e^{-c_2 k}, \label{eq:boundBirknerGW}
\end{align}
where in the last inequality, $\ell_0$ is as in
Lemma \ref{lem:donotmeet},
$T_{\ell_0(\mathcal T,S)}(S)$
denotes the last time $k$ with $|S_k|\leq \ell_0(\mathcal T,S)$, and we used
\eqref{eq:donotmeet}.
Taking $\b$ small enough so that $\Lambda_2 c_1 <c_2 v$
ensures that
the right hand side of
\eqref{eq:boundBirknerGW} is finite, $GW\times \DP$-a.s. This
concludes the proof.
\end{proof}
\subsection{The canopy tree.} \label{sec:canopyProof}
We consider the walk $(\mathtt T,(S_k))$ on the canopy tree $\mathtt T$ defined in Section
\ref{sec:canopyTree}, with parameters $m=d>\lambda>1$. In this section, we prove the following:
\begin{theorem} \label{th:L2regionCanopy}
$\b_2>0$ for the polymer on $(\mathtt T,(S_k))$ with $m>\lambda>1$.
\end{theorem}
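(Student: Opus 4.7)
The plan is to apply Khas'minskii's lemma (as in the proof of Theorem~\ref{th:transImpL2}), so it suffices to establish $\sup_{v\in V}\DE^{\otimes 2}_{v,v}[N_\infty]<\infty$. Since the walk from a leaf jumps deterministically to its parent, one immediately reduces to $v$ at level $\ge 1$. Write $L_k=|S_k|$, $M_k=\max_{j\le k}L_j$ (and analogously $L_k',M_k'$), and $\hat M_k:=\max(M_k,M_k')$.

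The first key step is an explicit formula for the pair intersection probability. Conditional on the level trajectories, $(S_k)$ is determined by independent uniform $\{1,\ldots,d\}$ child-choices at each downstep (upsteps being deterministic); furthermore, the first visit at any new maximum level is necessarily at the corresponding ancestor of $v$. It follows that, conditional on the level trajectory and $L_k=L_k'=\ell$, $S_k$ is uniform on the $d^{M_k-\ell}$ descendants at level $\ell$ of the ancestor of $v$ at level $M_k$; as both walks share the starting vertex $v$, their respective possible position sets nest, so
\[
\DP^{\otimes 2}_{v,v}\bigl(S_k=S_k'\bigm|\text{level trajs},\,L_k=L_k'=\ell\bigr)=d^{-(\hat M_k-\ell)},
\]
and consequently $\DE^{\otimes 2}_{v,v}[N_\infty]=\DE^{\otimes 2}_{v,v}\bigl[\sum_{k\ge 1}\mathbf 1_{L_k=L_k'}\,d^{L_k-\hat M_k}\bigr]$.

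It remains to estimate the right-hand side by decomposing over the epochs $\{k:\hat M_k=h\}$: during such an epoch, both walks live in the subtree $\mathtt T^{(h)}$ rooted at the ancestor of $v$ at level $h$ (cf.\ Figure~\ref{fig:canopy}). Since $\lambda<d$, the level walk $(L_k)$ is a reflected biased walk on $\mathbb Z_+$ with downward drift and geometric stationary distribution $\pi_L(j)=(1-\alpha)\alpha^j$ with $\alpha:=\lambda/d$; a Dirichlet-Green-function computation gives that within a single-walk epoch $\{M_k=h\}$ the expected number of visits to level $j\le h$ equals $\alpha^{j-h}/p_\uparrow$ with $p_\uparrow=\lambda/(\lambda+d)$, and reversibility yields the marginal bound $\DP_{\ell_0}(L_k=j)\le\alpha^{j-\ell_0}\wedge 1$. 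Combining these with the independence of the two walks and summing in $j\le h$ produces
\[
d^{-h}\,\DE^{\otimes 2}_{v,v}\Bigl[\sum_{k:\hat M_k=h}\mathbf 1_{L_k=L_k'}\,d^{L_k}\Bigr]\le C\,\max\bigl(\lambda^{-h},\,(\lambda/d)^h\bigr),
\]
which is geometric in $h$ and summable (since $1<\lambda<d$), with $C$ uniform in the starting level of $v$.

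The main obstacle is the case split based on whether $\lambda^2$ is smaller than, equal to, or larger than $d$ (which determines whether the inner level-sum $\sum_{j=0}^h(\lambda^2/d)^j$ is bounded, logarithmic, or of order $(\lambda^2/d)^h$), together with the uniform-in-$\ell_0$ control of the marginal $\DP_{\ell_0}(L_k=j)$: this is handled by reversibility for $j\ge\ell_0$ and by the Dirichlet epoch structure (which naturally absorbs the overshoot factor $\alpha^{-\ell_0}$) for $j<\ell_0$.
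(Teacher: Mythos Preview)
Your approach is correct and genuinely different from the paper's. Both verify the Khas'minskii condition \eqref{eq:Khas}, but by distinct routes.

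The paper works directly with the heat kernel: it exploits the symmetry of $\mathtt T$ to collapse each subtree $\mathtt T^{(\ell)}$ to a segment (the ``glued'' transition probabilities $\bar p^{(\ell)}_t$), then decomposes $\sum_{t,y}p_t(x,y)^2$ according to whether $y$ lies in the starting subtree $\mathtt T^{(\ell_0)}$ or in a strictly larger $\mathtt T^{(\ell)}$ (the $A_{\ell_0}$, $B_{\ell_0}^{(1)}$, $B_{\ell_0}^{(2)}$ split). Control of the outer pieces uses reversibility of $\bar p^{(\ell)}$ and a one-dimensional hitting-time estimate (the lemma of Section~\ref{subsec-A}). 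Your argument instead observes that, conditional on the level trajectory, $S_k$ is \emph{exactly} uniform on the $d^{M_k-L_k}$ descendants (at level $L_k$) of the ancestor of $v$ at level $M_k$; this reduces the collision count to an explicit functional of two independent reflected biased $\mathbb Z_+$-walks, handled by an epoch decomposition on $\hat M_k$ together with the exact Dirichlet Green function $G_D(h,j)=\alpha^{j-h}/p_\uparrow$ for $j\ge 1$ (it is $\alpha^{-h}$ at $j=0$, which is still bounded above by your formula). Both proofs meet the same trichotomy on $\lambda^2$ versus $d$: in the paper it appears in the bound for $B_{\ell_0}^{(1)}$, in your argument in the inner sum $\sum_{j}(\lambda^2/d)^j$.

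Two small imprecisions worth tightening. First, the displayed per-epoch bound should read $C\max\bigl(\lambda^{-(h-\ell_0)},(\lambda/d)^{\,h-\ell_0}\bigr)$, with the sum over $h\ge\ell_0$; as written with $\lambda^{-h}$ and $(\lambda/d)^h$, the constant $C$ must carry a factor $\lambda^{\ell_0}$ or $(d/\lambda)^{\ell_0}$, which does cancel upon summation but obscures the claimed uniformity. Second, your reduction to starting level $\ge 1$ is not actually needed: the uniformity formula holds from level $0$ as well, with the only effect being the harmless boundary correction to $G_D(h,0)$ noted above.

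Your route is more probabilistic and arguably more transparent (it pinpoints exactly why branching suppresses collisions despite the downward drift of the level walk); the paper's route stays closer to heat-kernel techniques and makes the reversibility structure do more of the work.
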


Even though the walk is in this case reversible and transient,
the existence of an $L^2$-region is not simply implied by Theorem
\ref{th:transImpL2} because, with $c_{xy}$ denoting the conductance of the edge
$(x,y)$,
the reversing
measure $\pi(x):=\sum_{x\sim y} c_{xy}= \lambda^{\ell-1} (\lambda+m)$ if
$x$ belongs to level $\ell$, is diverging when $\ell\to\infty$.

We briefly describe the situation. When $m>\lambda$, there is inside each finite $(m+1)$-regular sub-tree of the canopy tree a downwards drift which create traps for the walk, in the sense that the walk will spend a long time in finite subtrees at the bottom of the tree before exiting them forever by transience. Even though the walk will spend significant 
time in these finite sub-trees,
the branching structure ($m>1$) makes it hard for two independent copies
to meet frequently,
and this combined with transiences allow for an $L^2$-region to exist.

The rest of the section is devoted to the proof of Theorem
\ref{th:L2regionCanopy}. Section \ref{subsec-A}
develops some preliminary standard
(one dimensional) random walk estimates. Section
\ref{subsec-B} contains the actual proof.
\subsubsection{Random walk estimates}
\label{subsec-A}
Fix $\kappa>1$.
Consider the random walk $(X_k)$ on $\{0,1,\ldots,\ell\}$ with conductances $\gamma^i$ on the edge $(i,i+1)$, $i=0,\ldots,\ell-1$.
Let $\tau_i=\min\{t>0: X_t=i\}$. We write $\DP_i$ for the law of the random walk with $X_0=i$.
\begin{lemma}
There exists a constant $c>0$ such that for all $\ell > 0$ and $t>0$,
\begin{equation}
\label{eq-1}
\DP_\ell(\tau_0=t) \leq c \gamma^{-\ell}.
\end{equation}
\end{lemma}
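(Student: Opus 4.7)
The plan is to exploit reversibility of the birth--death chain. The walk has reversing measure $\pi$ with $\pi(0)=1$, $\pi(i)=\gamma^{i-1}(1+\gamma)$ for $1\leq i\leq \ell-1$, and crucially $\pi(\ell)=\gamma^{\ell-1}$, since the endpoint $\ell$ carries only the single edge $(\ell-1,\ell)$ with conductance $\gamma^{\ell-1}$. The whole proof is then built on the observation that the exponential factor $\gamma^{-\ell}$ in the bound is precisely $\pi(0)/\pi(\ell)$ up to a factor $\gamma$.

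First I would write out $\DP_\ell(\tau_0=t)$ as a sum over paths:
\[
\DP_\ell(\tau_0=t) \;=\; \sum_{\substack{(x_0,\ldots,x_t)\\ x_0=\ell,\ x_t=0\\ x_i\geq 1\ \forall i<t}} \prod_{i=0}^{t-1} P(x_i,x_{i+1}),
\]
and then apply the detailed balance identity $\pi(x_i)P(x_i,x_{i+1})=\pi(x_{i+1})P(x_{i+1},x_i)$ telescopically along each path to obtain
\[
\pi(\ell)\prod_{i=0}^{t-1}P(x_i,x_{i+1}) \;=\; \pi(0)\prod_{i=0}^{t-1}P(x_{i+1},x_i).
\]
The right-hand product, reindexed by $y_j=x_{t-j}$, is exactly the probability of the reversed path $(0=y_0,y_1,\ldots,y_t=\ell)$ under $\DP_0$, and the constraint $x_i\geq 1$ for $i<t$ translates to $y_j\geq 1$ for $j\geq 1$.

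Summing the reversed paths over all admissible choices then yields
\[
\pi(\ell)\,\DP_\ell(\tau_0=t) \;=\; \pi(0)\,\DP_0\!\left(X_t=\ell,\ X_j\geq 1\ \text{for}\ 1\leq j\leq t-1\right) \;\leq\; \pi(0),
\]
since the right-hand probability is bounded by $1$. Dividing gives $\DP_\ell(\tau_0=t)\leq \pi(0)/\pi(\ell)=\gamma^{1-\ell}$, so the lemma holds with $c=\gamma$.

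There is essentially no obstacle here: the point I would want to emphasize is that the bound holds uniformly in $t$ without any refined analysis of the first-passage distribution (no generating-function computation, no spectral argument). The only thing to be careful about is the correct value of $\pi(\ell)$ at the boundary vertex, which makes $\pi(0)/\pi(\ell)$ of order $\gamma^{-\ell}$ rather than $\gamma^{-\ell}/(1+\gamma)$ as one might initially expect from the formulas for interior vertices.
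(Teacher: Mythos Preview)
Your proof is correct and is genuinely different from the paper's argument. The paper does not invoke time-reversal directly; instead it decomposes $\tau_0$ into a geometric number $\mathcal G$ of excursions from $\ell$ back to $\ell$ followed by a final run from $\ell$ to $0$ that avoids $\ell$, and then uses the conductance estimate $q_\ell:=\DP_\ell(\tau_0<\tau_\ell)\le c\gamma^{-\ell}$ together with the trivial bound $\sum_k \mathbf 1_{\sum_{j\le k}\hat\tau^j=t}\le 1$ to conclude. Your approach short-circuits all of this: one application of detailed balance along the path gives
\[
\pi(\ell)\,\DP_\ell(\tau_0=t)=\pi(0)\,\DP_0\!\left(X_t=\ell,\ X_j\ge 1\ \text{for }1\le j\le t\right)\le \pi(0),
\]
and the ratio $\pi(0)/\pi(\ell)=\gamma^{1-\ell}$ already yields the lemma with $c=\gamma$. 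Both proofs ultimately rest on reversibility (the paper's bound on $q_\ell$ is itself a conductance computation), but your route is shorter and avoids the excursion decomposition entirely. The only cosmetic point is that the constraint on the reversed path is naturally $1\le j\le t$ rather than $1\le j\le t-1$, but since $y_t=\ell\ge 1$ this is immaterial.
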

\begin{proof}
By a conductance computation,
$q_\ell := \DP_\ell(\tau_0 < \tau_\ell) \leq c \gamma^{-\ell}$
for some constant $c$.
We use the pathwise decomposition
$\tau_0=\sum_{j=1}^{\mathcal G} \hat\tau^j+ \hat \tau_0$, where $\hat \tau^j$ are i.i.d.\ and have the law of $\tau_\ell$ under $\DP_\ell$ conditioned
on $\tau_\ell<\tau_0$,
${\mathcal G}$ is a geometric random variable of success parameter $q_\ell$, and
$\hat \tau_0$ has the law of $\tau_0$ under $\DP_0$
and conditioned on $\tau_0<\tau_\ell$, with independence of the
$\hat \tau^j$s, $\mathcal G$ and $\hat \tau_0$.
Then,
\begin{align*}
\DP_\ell\left(\sum\nolimits_{j=1}^{\mathcal G} \hat\tau^j = t\right) & = \sum_{k=0}^\infty \DP(\mathcal G = k) \DP_\ell\left(\sum\nolimits_{j=1}^{k} \hat\tau^j = t\right)\\
& \leq q_\ell \,\DE_\ell \left[ \sum_{k=0}^\infty  \mathbf{1}_{\sum\nolimits_{j=1}^{k} \hat\tau^j = t}\right]
 \leq c\gamma^{-\ell},
\end{align*}
where in the last inequality, we have used that the quantity inside the expectation on the second line is almost-surely bounded by $1$ since $k\to \sum_{j=1}^k \hat \tau^j$ visits at most once $t$ (the $\hat \tau^j$'s are positive).
We finally obtain that
\[\DP_\ell(\tau_0=t) = \sum_{s=0}^t \DP_\ell\left(\sum\nolimits_{j=1}^{\mathcal G} \hat\tau^j = s\right) \DP_\ell(\hat \tau_0 = t-s) \leq c\gamma^{-\ell}.\]
\end{proof}

\subsubsection{Proof of Theorem \ref{th:L2regionCanopy}}
\label{subsec-B}
Equipped with the estimates in Section \ref{subsec-A},
we can now proceed with the
\begin{proof}[Proof of Theorem
\ref{th:L2regionCanopy}]
We will prove that condition \eqref{eq:Khas} is verified, that is
\begin{equation} \label{eq:supCanopKhas}
\sup_{x \in\mathtt T} \sum_{t\geq 0} \sum_{y\in \mathcal T} p_t(x,y)^2 < \infty.
\end{equation}
For $x,y\in\mathtt T$, recall the definition of the Green function:
\[G(x,y) = \sum_{t\geq 0} p_t(x,y) = \DE_x\left[ \sum_{t\geq 0} \mathbf{1}_{S_t=y}\right].\]
For all $\ell\geq 0$, we label by $\ell\in \mathtt T$ the $\ell$-th
vertex on the left-most ray of $\mathtt T$,
see figure \ref{fig:canopy}.
We first observe that:
\begin{equation} \label{eq:bddGreenFncCanopy}
\sup_{\ell \geq 0} G(\ell,\ell) < \infty.
\end{equation}
Indeed, the number of visits of $(S)$ to $\ell$ is, by transience, stochastically dominated by the number of visits to site $\ell$ of biased random walk
on $\mathbb Z$, with jump probability to the right equal to $\lambda/(\lambda+1)$ and geometric holding times,
started at $\ell$. The law of the latter is independent of
$\ell$, and by transience, its Green function is bounded. This proves
\eqref{eq:bddGreenFncCanopy}.

We next show that:
\begin{equation}
\sup_{\ell_0 \geq 0} \sum_{t\geq 0} \sum_{y\in \mathtt T} p_t(\ell_0,y)^2 <\infty.
\end{equation}
Note that by the
symmetry properties of the canopy tree, this will directly entail \eqref{eq:supCanopKhas}. For all $\ell\geq 0$, let $\mathtt T^{(\ell)}$ denote the $(m+1)$-regular tree of height $\ell$ rooted at $\ell\in\mathtt T$ of degree m,
see Figure \ref{fig:canopy}. We decompose:
\begin{align*}
&\sum_t \sum_{y\in \mathtt T} p_t(\ell_0,y)^2\\
 & = \sum_t \sum_{y\in \mathtt T^{(\ell_0)}} p_t(\ell_0,y)^2 +
 \sum_t \sum_{\ell>\ell_0} \sum_{y\in \mathtt T^{(\ell)}\setminus \mathtt T^{(\ell-1)}} p_t(\ell_0,y)^2\\
& =: A_{\ell_0} + B_{\ell_0}.
\end{align*}
We first deal with $A_{\ell_0}$. For $\ell\geq 0$, define $\bar p^{(\ell)}_{t}$ to be the transition probability of $S$ after all vertices at the same level
in $\mathtt T^{(\ell)}$ have been glued together and the
corresponding conductances have been summed.
Label by  $w\in[0,\ell]$ the node of depth $w$ in the glued version of
$\mathtt T^{(\ell)}$, so that $\bar p^{(\ell)}_{t}(w,w')$ stands for the
probability for $(S_k)$ to go from depth $w$ to depth $w'$ in $\mathtt T^{(\ell)}$ after $t$ steps.

By symmetry, we observe that $p_t(\ell,y)=m^{-w}\bar p^{(\ell)}_{t}(0,w)$ for all $y\in \mathtt T^{(\ell)}$ of depth $w$. Moreover, by reversibilty,
 \begin{equation} \label{eq:revpbar}
 \bar p^{(\ell)}_{t-s}(0,w) = \left(\frac{m}{\lambda}\right)^{w} \bar p^{(\ell)}_{t-s}(w,0).
 \end{equation} Therefore,
\begin{align}
 \sum_{t\geq 0} \sum_{y\in \mathtt T^{(\ell_0)}} p_t(\ell_0,y)^2 & = \sum_{t\geq 0} \sum_{w=0}^{\ell_0} m^w m^{-2w} \bar p^{(\ell_0)}_{t}(0,w)^2 \label{eq:symmetryCanopy}\\
 & \leq \sum_{w=0}^\infty \lambda^{-w} \sum_{t\geq 0} \bar p^{(\ell_0)}_{t}(w,0).\nonumber
\end{align}
By the Markov property, the
identity $\bar p^{(\ell)}_{t}(0,0) = \bar p_t(\ell,\ell)$
and \eqref{eq:bddGreenFncCanopy}, we have that
\begin{equation} \label{eq:GreenMarkov}
\sum_{t\geq 0} \bar p^{(\ell)}_{t}(w,0) \leq \sum_{t\geq 0} \bar p^{(\ell)}_{t}(0,0) \leq \sup_{\ell \geq 0} G(\ell,\ell) < \infty,
\end{equation}
and it follows from \eqref{eq:symmetryCanopy}
that $\sup_{\ell_0} A_{\ell_0} < \infty$ since $\lambda > 1$.

We turn to $B_{\ell_0}$. By the Markov property, we have that
\[B_{\ell_0} = \sum_t \sum_{\ell> \ell_0}
\sum_{y\in \mathtt T^{(\ell)}\setminus \mathtt T^{(\ell-1)}} \left(\sum_{s=0}^t \DP_{\ell_0}(\tau_\ell = s) p_{t-s}(\ell,y)\right)^2\]
where $\tau_\ell$ denotes the first hitting time of $\ell \in \mathtt T$. By symmetry, as in \eqref{eq:symmetryCanopy},
\begin{align*}
B_{\ell_0} & \leq \sum_{t\geq 0} \sum_{\ell > \ell_0} \sum_{w=1}^{\ell} m^{-w} \left(\sum_{s=0}^t \DP_{\ell_0}(\tau_\ell = s) \,\bar p^{(\ell)}_{t-s}(0,w) \right)^2\\
& =: B_{\ell_0}^{(1)} + B_{\ell_0}^{(2)},
\end{align*}
where in $B_{\ell_0}^{(1)}$ the sum in $w$ is restricted to $w\leq \ell -\ell_{0}$.
By \eqref{eq:revpbar} and since $\sum_{s=0}^t \DP_{\ell_0}(\tau_\ell = s) \,\bar p^{(\ell)}_{t-s}(0,w)\leq 1$, we have that
\begin{align*}
B_{\ell_0}^{(2)} \leq \sum_{t\geq 0} \sum_{\ell > \ell_0} \sum_{w=\ell-\ell_0+1}^{\ell} \lambda^{-w} \left(\sum_{s=0}^t \DP_{\ell_0}(\tau_\ell = s) \,\bar p^{(\ell)}_{t-s}(w,0) \right),
\end{align*}
so by summing first over $t\geq 0$ and using \eqref{eq:GreenMarkov}, we find that
\[
\sup_{\ell_0 \geq 0} B_{\ell_0}^{(2)} \leq \sup_{\ell \geq 0} G(\ell,\ell) \sum_{\ell > \ell_0} \lambda^{-(\ell-\ell_0+1)} < \infty.
\]
Identity \eqref{eq:revpbar} further gives that
\[
B_{\ell_0}^{(1)} = \sum_{t\geq 0} \sum_{\ell > \ell_0} \sum_{w=1}^{\ell-\ell_0} m^{-w} \left(\frac{m}{\lambda}\right)^{2w} \left(\sum_{s=0}^t \DP_{\ell_0}(\tau_\ell = s) \,\bar p^{(\ell)}_{t-s}(w,0) \right)^2,
\]
and we have by estimates \eqref{eq-1} with conductance $m/\lambda$, and  \eqref{eq:GreenMarkov}, letting $k_\ell = c \left( m / \lambda \right)^\ell$,
\begin{equation*} \label{eq:canopy_kl_bound}
\left(\sum_{s=0}^t \DP_{\ell_0}(\tau_\ell = s) \,\bar p^{(\ell)}_{t-s}(w,0) \right) \leq k_{\ell-\ell_0}^{-1} \sum_{t\geq 0} p^{(\ell)}_{t}(w,0) \leq C\, k_{\ell-\ell_0}^{-1},
\end{equation*}
for some finite $C$, so that
\begin{align*}
B_{\ell_0}^{(1)} \leq C \sum_{\ell>\ell_0} k_{\ell-\ell_0}^{-1} \sum_{w=1}^{\ell-\ell_0}  \left(\frac{m}{\lambda^2}\right)^{w} \left(\sum_{t\geq 0} \sum_{s=0}^t \DP_{\ell_{0}}(\tau_\ell = s) \,\bar p^{(\ell)}_{t-s}(w,0) \right).
\end{align*}
The sum inside the above parenthesis is again uniformly bounded from above. Moreover, there is some $C<\infty$ such that:
\[\sum_{w=1}^\ell  \left(\frac{m}{\lambda^2}\right)^{w} \leq \begin{cases}
 C & \text{if } m<\lambda^2,\\
 \ell & \text{if } m = \lambda^2,\\
 C\left({m}/{\lambda^2}\right)^\ell & \text{if } m>\lambda^2.
\end{cases}
\]
In any case, $\sum_{\ell > 0} k_\ell^{-1} \sum_{w=1}^\ell  \left({m}/{\lambda^2}\right)^{w} < \infty$, so putting things together we obtain that $\sup_{\ell_0} B_{\ell_0} <\infty$, which concludes the proof.
\end{proof}

\subsection{Presence of an $L^2$-region for a recurrent walk} \label{sec:L2regionRec}
Let $\mathcal T_2$ be the infinite binary tree and denote its root by $o$. We will 
define a walk on $T_2$ which, at each step, goes 
down with probability 1/2 from a vertex to one of its children, 
until a clock rings and brings the walk back to the root. 
If the distribution of the clock has a sufficiently heavy tail, 
the number of intersections of two independent walks 
will be small enough to allow for $\b_2>0$. 

Turning to the actual construction,
consider the graph product $G_0 = \mathcal T_2 \times \mathbb Z^2$ where $\mathbb Z^2$ stands for the two-dimensional lattice. Let $G=(V,E)$ be the subgraph of $G_0$ where $V=\{(v,x)\in G_0 : d(o,v) = \Vert x \Vert_1\}$ and $E$ contains the edges of $G_0$ for which both ends are in $V$.
Let $q(x,y)=1/4$ if $x,y\in \mathbb Z^2$ satisfy $|x-y|=1$.  Define the walk $S_n=(T_n,X_n)$ on $G$ by the transition probabilities 
\[
p((v,x),(w,y))= \begin{cases}
\frac{1}{2} q(x,y) 
& \text{ if } w \text{ is a child of } v \text{ and } y\neq 0,\\
q(x,y)
& \text{ if } y=0 \text{ and } w=o,\\
 0 & \text{ otherwise,}
\end{cases} 
\] for all edges $((v,x),(w,y))\in E$.
Defined as such, $(X_n)$ is the SRW on $\mathbb Z^2$ and $T_n$ is a walk on the binary tree such that $T_n$ jumps to one of his children with probability $1/2$ when $X_{n+1}\neq 0$ and jumps back to the root when $X_{n+1}=0$.

\begin{theorem} The walk $(S_n)$ is recurrent and satisfies $\b_2>0$ for the associated polymer.
\end{theorem}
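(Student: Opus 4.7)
The plan is to first observe that recurrence of $(S_n)$ follows from the recurrence of the projection $(X_n)$, and then to prove $\b_2>0$ via the Khas'minskii criterion \eqref{eq:Khas}, which reduces the task to showing $\sup_{x\in V}\DE_{x,x}^{\otimes 2}[N_\infty(S,S')]<\infty$, where $N_\infty=\sum_{n\geq 1}\mathbf{1}_{S_n=S_n'}$ counts the intersection times of two independent copies $(S_n)$ and $(S_n')$ of the chain. Recurrence itself is immediate: $(X_n)$ is SRW on $\Z^2$, hence recurrent, so $X_n=0$ for infinitely many $n\geq 1$; but whenever $X_{n+1}=0$, the transition rule forces $T_{n+1}=o$, so $(S_n)=(o,0)$ infinitely often.

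The key structural observation is that the tree component $T_n$ is tightly coupled to the return structure of $X$. Writing $r_n$ for the last time $s\in[0,n]$ at which $X_s=0$, conditionally on $(X_s)_{s\leq n}$ (and on $\{r_n>0\}$) the path $T_{r_n},T_{r_n+1},\ldots,T_n$ is an unbiased descent down the binary tree from $o$ of length $n-r_n$, so $T_n$ is uniform on the $2^{n-r_n}$ vertices at depth $n-r_n$. In particular, for two independent copies, $\{T_n=T_n'\}$ forces $r_n=r_n'$ (i.e.\ equal levels) and, given this, carries conditional probability exactly $2^{-(n-r_n)}$.

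Starting from $x=(o,0)$, decompose by the common last-return time of $X$ and $X'$ to $0$:
\[
\DP^{\otimes 2}_{(o,0)}(S_n=S_n')=\sum_{t=0}^n \DP^{\otimes 2}(r_n=r_n'=t,\,X_n=X_n')\,2^{-(n-t)}.
\]
By independence of $X,X'$ and the Markov property applied at time $t$, each summand factors as $p^{\Z^2}_t(0,0)^2\,f_{n-t}\,2^{-(n-t)}$, where $p^{\Z^2}_t$ denotes the $2$-dimensional SRW heat kernel and $f_\ell:=\sum_z h_\ell(0,z)^2$ with $h_\ell(0,z)=\DP_0(X_s\neq 0 \text{ for } s\in[1,\ell],\,X_\ell=z)$. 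Since $f_\ell\leq 1$, swapping sums gives
\[
\DE^{\otimes 2}_{(o,0)}[N_\infty]\leq\Bigl(\sum_{t\geq 0}p^{\Z^2}_t(0,0)^2\Bigr)\Bigl(\sum_{\ell\geq 0}2^{-\ell}\Bigr)<\infty,
\]
as the first factor is finite by the two-dimensional local CLT $p^{\Z^2}_t(0,0)=O(1/t)$.

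For an arbitrary starting point $x=(v,y)\in V$, split $N_\infty$ by whether both walks have made their first return to $0$; let $\sigma$ and $\sigma'$ be these return times. For $n<\sigma\wedge\sigma'$, both $T_n$ and $T_n'$ are independent uniform vertices among the $2^n$ descendants of $v$ at depth $n$, contributing at most $\sum_{n\geq 1}2^{-n}=1$ to $\DE_{x,x}^{\otimes 2}[N_\infty]$. In the regime where exactly one walk has reset, the two trees sit at different levels, so the contribution is $0$. Once both walks have reset, the decomposition of the previous paragraph applies verbatim with $p^{\Z^2}_t(0,0)$ replaced by $p^{\Z^2}_t(y,0)$; the bound $p^{\Z^2}_t(y,0)\leq p^{\Z^2}_t(0,0)\leq C/t$ together with $p^{\Z^2}_t(y,0)=0$ for $t<\|y\|_1$ yields $\sup_{y\in\Z^2}\sum_t p^{\Z^2}_t(y,0)^2<\infty$, so the bound on $\DE^{\otimes 2}_{x,x}[N_\infty]$ is uniform in $x\in V$, and Khas'minskii delivers $\b_2>0$. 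The main technical point is the clean bookkeeping of the last-return time $r_n$ in the decomposition; beyond that, only standard $\Z^2$-SRW heat kernel estimates are required.
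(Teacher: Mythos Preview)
Your proof is correct and follows essentially the same strategy as the paper: both verify the Khas'minskii criterion \eqref{eq:Khas} by exploiting that $T_n=T_n'$ forces the two $\Z^2$-walks to share the same last return time to $0$, whence the tree contributes a geometric factor and the $\Z^2$-part contributes $\sum_t p_t^{\Z^2}(\cdot,0)^2<\infty$. The only cosmetic difference is that the paper groups time into blocks between \emph{simultaneous} returns of $X,X'$ to $0$ (bounding each block by $1+\DE[\mathcal G]$ with $\mathcal G$ geometric), whereas you decompose more finely according to the exact common value of the last return time; the resulting bounds coincide.
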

\begin{proof}
Note that $(X_n)$ is the SRW on $\mathbb Z^2$. Since $(X_n)$ visits infinitely many often $0$, and since $T_n=o$ whenever this happens, $(S_n)$ is recurrent. 

We now check that the condition in \eqref{eq:Khas} is verified. For two independent copies $S=(X,T),S'=(X',T')$ of $S$, let $\tau_0=0$ and define recursively $\tau_{n+1} = \inf\{k> \tau_n : X_k=X_k'=0\}$, with the convention that the infimum over an empty set is equal to infinity. Further let $K=K(X,X')=\sum_{k\geq 0} \mathbf{1}_{X_k=X'_k=0}$. 

For any $(v,x)\in V$, we have
\begin{align*}
\DE_{(v,x)}^{\otimes 2}\left[\sum_{n=1}^\infty \mathbf{1}_{S_n=S_n'}\right]&  \leq \DE_{(v,x)}^{\otimes 2}\left[\sum_{n=1}^\infty \mathbf{1}_{T_n=T_n'}\right]\\
& = \DE_{(v,x)}^{\otimes 2}\left[\sum_{k=0}^K\DE_{(v,x)}^{\otimes 2}\left[\sum_{n=\tau_k}^{\tau_{k+1}-1} \mathbf{1}_{T_n=T_n'}\middle| X,X'\right]\right].
\end{align*}
Let $\tilde{T},\tilde{T}'$ be independent copies of the random walk on $\mathcal{T}_2$ which at each step, goes down to one of its children with probability $1/2$. By Markov's property, we have
\[
\DE_{(v,x)}^{\otimes 2}\left[\sum_{n=\tau_k}^{\tau_{k+1}-1} \mathbf{1}_{T_n=T_n'}\middle| X,X'\right] \leq \DE_{o}^{\otimes 2}\left[\sum_{n=0}^{\infty} \mathbf{1}_{\tilde{T}_n=\tilde{T}_n'}\right] = 1 + \DE[\mathcal G],
\]
where $\mathcal G$ is a geometric random variable of parameter $1/2$. Therefore,
\begin{align*}
\DE_{(v,x)}^{\otimes 2}\left[\sum_{n=1}^\infty \mathbf{1}_{S_n=S_n'}\right]&  \leq (1 + \DE[\mathcal G]) \DE_{(v,x)}^{\otimes 2}\left[K\right],
\end{align*}
where $\DE_{(v,x)}^{\otimes 2}\left[K\right] = \sum_{n\geq 0} \DP_x(X_n=0)^2 \leq \sum_{n\geq 0} cn^{-2}<\infty$ by the local limit theorem for the simple random walk, uniformly in $(v,x)\in V$. Hence, by Khasminskii's lemma,  $\b_2>0$ for the polymer associated with $(S_n)$.
\end{proof}

\section{Conclusions and open problems}
\label{sec-open}
We have presented some elements of a theory of polymers on general graphs.
Our study leaves several important open questions. The comments below address
some of these.

\begin{enumerate}
  \item It is natural to wonder whether, in a conductance model,
    $\beta_c$ is monotone with respect to adding edges. Maybe counter-intuitively, the answer is no.  The following is a counter example.
Consider the SRW on $\mathbb Z_+^3$, which  satisfies
$\b_c>0$. This corresponds to a conductance model on $\mathbb Z_+^3$
with all edges having conductance equal to $1$. Now increase the
conductance between
$(i,j,k)\in \Z^3_+$
and
$(i+1,j,k)$ to $e^{e^i}$.
(This corresponds to adding edges between $i$ and $i+1$,
more and more as $i\to\infty$.)
Then, by Borel-Cantelli,
the random walk eventually goes ballistically on a path
$(i,a,b)\to (i+1,a,b)\to (i+2,a,b)\ldots$, with some random $a,b$,
and in fact the probability that this
did not happen before time $n$ decays super exponentially in $n$.
Now a repeat of the proof of Proposition
\ref{prop:counterExUI} leads to the conclusion that $\b_c=0$.
\item  We showed in Section \ref{sec:L2regionRec} that there is a pair $(G,S)$ where $S$ is recurrent but the polymer has $\b_2>0$. Is there a \textit{reversible} recurrent
pair $(G,S)$ 
 with $\b_c>0$ or
 $\b_2>0$?
  Theorem \ref{th:noL2rec} shows that such pair with $\b_2>0$
 must necessarily satisfy that $\inf_x \pi(x)=0$. We note that recurrent graphs with a.s.\ finitely many collisions of two copies of SRW
were constructed in \cite{KP04}, however in those examples $\b_2=0$.

\item Is $\b_c>0$ for SRW on the supercritical percolation cluster
  on $\Z^d$, $d\geq 3$? Theorem \ref{th:noL2perco} shows that $\b_2=0$ in that
  setup. For $d=2$, $\b_c=0$ as follows either Remarks \ref{rem-CV} or
  \ref{rem-Lacoin}.
\item  In Section \ref{sec-5}, we showed that for the $\lambda$-biased
  random walk on the Galton-Watson tree, $\b_c>0$ in the transient case
  (with $p_0=0$)
  and  $\b_c=0$ in the positive
  recurrent case. We left open the null recurrent case
  ($m=\lambda$). An application of \cite{peres2008central}
  shows that $\b_2=0$, even in the case $p_1=0$. Unfortunately,
  it seems that the CLT in \cite{peres2008central} rules out
  the applicability of Theorem \ref{th:Birkner}.
\end{enumerate}

\section*{Acknowledgement}
This project has received funding from the European Research Council (ERC) under the European
Union Horizon 2020 research and innovation program (grant agreement No. 692452). 

\bibliographystyle{plain}
{\footnotesize \bibliography{polymeres-bib}

\begin{thebibliography}{10}

\bibitem{AiWa06Canopy}
Michael Aizenman and Simone Warzel.
\newblock The canopy graph and level statistics for random operators on trees.
\newblock {\em Mathematical Physics, Analysis and Geometry}, 9(4):291--333,
  2006.

\bibitem{Barlow}
Martin~T. Barlow.
\newblock Random walks and diffusions on fractals.
\newblock In {\em Proceedings of the {I}nternational {C}ongress of
  {M}athematicians, {V}ol. {I}, {II} ({K}yoto, 1990)}, pages 1025--1035. Math.
  Soc. Japan, Tokyo, 1991.

\bibitem{barlowperc}
Martin~T. Barlow.
\newblock Random walks on supercritical percolation clusters.
\newblock {\em Ann. Probab.}, 32(4):3024--3084, 2004.

\bibitem{BB}
Martin~T. Barlow and Richard~F. Bass.
\newblock Random walks on graphical {S}ierpinski carpets.
\newblock In {\em Random walks and discrete potential theory ({C}ortona,
  1997)}, Sympos. Math., XXXIX, pages 26--55. Cambridge Univ. Press, Cambridge,
  1999.

\bibitem{BatesChatterjee}
Erik Bates and Sourav Chatterjee.
\newblock The endpoint distribution of directed polymers.
\newblock {\em Ann. Probab.}, 48(2):817--871, 03 2020.

\bibitem{BergerToninelli}
Quentin Berger and Fabio~Lucio Toninelli.
\newblock On the critical point of the random walk pinning model in dimension
  {$d=3$}.
\newblock {\em Electron. J. Probab.}, 15:no. 21, 654--683, 2010.

\bibitem{Bir04}
Matthias Birkner.
\newblock A condition for weak disorder for directed polymers in random
  environment.
\newblock {\em Electron. Comm. Probab.}, 9:22--25 (electronic), 2004.

\bibitem{birkner2011collision}
Matthias Birkner, Andreas Greven, Frank den Hollander, et~al.
\newblock Collision local time of transient random walks and intermediate
  phases in interacting stochastic systems.
\newblock {\em Electronic Journal of Probability}, 16:552--586, 2011.

\bibitem{BirknerSun}
Matthias Birkner and Rongfeng Sun.
\newblock Annealed vs quenched critical points for a random walk pinning model.
\newblock {\em Ann. Inst. Henri Poincar{\'e} Probab. Stat.}, 46(2):414--441,
  2010.

\bibitem{BirknerSun11}
Matthias Birkner and Rongfeng Sun.
\newblock Disorder relevance for the random walk pinning model in dimension 3.
\newblock {\em Ann. Inst. Henri Poincar{\'e} Probab. Stat.}, 47(1):259--293,
  2011.

\bibitem{Bolth89}
Erwin Bolthausen.
\newblock A note on the diffusion of directed polymers in a random environment.
\newblock {\em Comm. Math. Phys.}, 123(4):529--534, 1989.

\bibitem{BrunetE2000Pdot}
E~Brunet and B~Derrida.
\newblock Probability distribution of the free energy of a directed polymer in
  a random medium.
\newblock {\em Physical review. E, Statistical physics, plasmas, fluids, and
  related interdisciplinary topics}, 61(6 Pt B):6789--6801, 2000.

\bibitem{EBuffet1993Dpot}
E~Buffet.
\newblock Directed polymers on trees: a martingale approach.
\newblock {\em Journal of Physics A: Mathematical and General},
  26(8):1823--1834, 1993.

\bibitem{CaGuHuMe04}
Philippe Carmona, Francesco Guerra, Yueyun Hu, and Olivier Mejane.
\newblock Strong disorder for a certain class of directed polymers in a random
  environment.
\newblock {\em Journal of Theoretical Probability}, 19, 04 2004.

\bibitem{CaHu02}
Philippe Carmona and Yueyun Hu.
\newblock On the partition function of a directed polymer in a {G}aussian
  random environment.
\newblock {\em Probab. Theory Related Fields}, 124(3):431--457, 2002.

\bibitem{CStFlour}
Francis Comets.
\newblock {\em {Directed polymers in random environments. \'Ecole d'\'Et\'e de
  Probabilit\'es de Saint-Flour XLVI -- 2016.}}
\newblock Cham: Springer, 2017.

\bibitem{CL16}
Francis Comets and Quansheng Liu.
\newblock Rate of convergence for polymers in a weak disorder.
\newblock {\em Journal of Mathematical Analysis and Applications}, 455(1):312
  -- 335, 2017.

\bibitem{CoFlRa19}
Francis Comets, Gregorio Moreno, and Alejandro~F. Ram\'{\i}rez.
\newblock Random polymers on the complete graph.
\newblock {\em Bernoulli}, 25(1):683--711, 2019.

\bibitem{CSY03}
Francis Comets, Tokuzo Shiga, and Nobuo Yoshida.
\newblock Directed polymers in a random environment: path localization and
  strong disorder.
\newblock {\em Bernoulli}, 9(4):705--723, 2003.

\bibitem{CV06}
Francis Comets and Vincent Vargas.
\newblock Majorizing multiplicative cascades for directed polymers in random
  media.
\newblock {\em ALEA Lat. Am. J. Probab. Math. Stat.}, 2:267--277, 2006.

\bibitem{CY06}
Francis Comets and Nobuo Yoshida.
\newblock Directed polymers in random environment are diffusive at weak
  disorder.
\newblock {\em The Annals of Probability}, pages 1746--1770, 2006.

\bibitem{CYBMPO2}
Francis Comets and Nobuo Yoshida.
\newblock Localization transition for polymers in {P}oissonian medium.
\newblock {\em Comm. Math. Phys.}, 323(1):417--447, 2013.

\bibitem{CoNa20}
Cl\'{e}ment Cosco and Shuta Nakajima.
\newblock Gaussian fluctuations for the directed polymer partition function for
  $d\geq 3$ and in the whole $l^2$-region.
\newblock {\em To appear in Ann. IHP. arXiv:1903.00997}, 2020.

\bibitem{CoNaNa20}
Cl\'{e}ment Cosco, Shuta Nakajima, and Makoto Nakashima.
\newblock Law of large numbers and fluctuations in the sub-critical and {$L^2$}
  regions for {SHE} and {KPZ} equation in dimension $d\geq 3$.
\newblock 2020.

\bibitem{dembo2002large}
Amir Dembo, Nina Gantert, Yuval Peres, and Ofer Zeitouni.
\newblock Large deviations for random walks on galton--watson trees: averaging
  and uncertainty.
\newblock {\em Probability theory and related fields}, 122(2):241--288, 2002.

\bibitem{PAMonGW_HoKoSa20}
Frank den Hollander, Wolfgang K\"{o}nig, and Renato~S. dos Santos.
\newblock The parabolic anderson model on a galton-watson tree, 2020.

\bibitem{DerridaSpohn88}
Bernard Derrida and Herbert Spohn.
\newblock Polymers on disordered trees, spin glasses, and traveling waves.
\newblock {\em J. Statist. Phys.}, 51(5-6):817--840, 1988.
\newblock New directions in statistical mechanics (Santa Barbara, CA, 1987).

\bibitem{DuGuRyZe20}
Alexander Dunlap, Yu~Gu, Lenya Ryzhik, and Ofer Zeitouni.
\newblock Fluctuations of the solutions to the {KPZ} equation in dimensions
  three and higher.
\newblock {\em Probab. Theory Relat. Fields}, 2020.

\bibitem{EckmannJ.1989TlLe}
J.~Eckmann and C.~Wayne.
\newblock The largest liapunov exponent for random matrices and directed
  polymers in a random environment.
\newblock {\em Communications in Mathematical Physics}, 121(1):147--175, 1989.

\bibitem{GKZ93}
G.~R. Grimmett, H.~Kesten, and Y.~Zhang.
\newblock Random walk on the infinite cluster of the percolation model.
\newblock {\em Probab. Theory Related Fields}, 96(1):33--44, 1993.

\bibitem{grimmett}
Geoffrey Grimmett.
\newblock {\em Percolation}, volume 321 of {\em Grundlehren der Mathematischen
  Wissenschaften}.
\newblock Springer-Verlag, Berlin, second edition, 1999.

\bibitem{GuRyZe18}
Yu~Gu, Lenya Ryzhik, and Ofer Zeitouni.
\newblock The {E}dwards--{W}ilkinson limit of the random heat equation in
  dimensions three and higher.
\newblock {\em Communications in Mathematical Physics}, 363(2):351--388, 2018.

\bibitem{HamblyKumagaiSierpinski}
B.~M Hambly and T~Kumagai.
\newblock Asymptotics for the spectral and walk dimension as fractals approach
  euclidean space.
\newblock {\em Fractals}, 10(4):403--412, 2002.

\bibitem{Harris}
T.~E. Harris.
\newblock A lower bound for the critical probability in a certain percolation
  process.
\newblock {\em Proc. Cambridge Philos. Soc.}, 56:13--20, 1960.

\bibitem{HuHe85}
David~A. Huse and Christopher~L. Henley.
\newblock Pinning and roughening of domain walls in {I}sing systems due to
  random impurities.
\newblock {\em Physical Review Letters}, 54(25):2708, 1985.

\bibitem{ImbrieSpencer88}
John Imbrie and Thomas Spencer.
\newblock Diffusion of directed polymers in a random environment.
\newblock {\em J. Statist. Phys.}, 52(3-4):609--626, 1988.

\bibitem{jones}
Owen~Dafydd Jones.
\newblock Transition probabilities for the simple random walk on the
  {S}ierpi\'{n}ski graph.
\newblock {\em Stochastic Process. Appl.}, 61(1):45--69, 1996.

\bibitem{KestenStigum66}
H.~Kesten and B.~P. Stigum.
\newblock A limit theorem for multidimensional {G}alton-{W}atson processes.
\newblock {\em Ann. Math. Statist.}, 37:1211--1223, 1966.

\bibitem{KP04}
Manjunath Krishnapur and Yuval Peres.
\newblock Recurrent graphs where two independent random walks collide finitely
  often.
\newblock {\em Electron. Comm. Probab.}, 9:72--81, 2004.

\bibitem{KumagaiTakashi2014RWoD}
Takashi Kumagai.
\newblock {\em Random Walks on Disordered Media and their Scaling Limits:
  École d'Été de Probabilités de Saint-Flour XL - 2010}, volume 2101 of
  {\em Lecture Notes in Mathematics}.
\newblock Springer International Publishing, Cham, 2014 edition, 2014.

\bibitem{Lacoin10}
Hubert Lacoin.
\newblock New bounds for the free energy of directed polymers in dimension
  {$1+1$} and {$1+2$}.
\newblock {\em Comm. Math. Phys.}, 294(2):471--503, 2010.

\bibitem{LiuWat}
Quansheng Liu and Fr{{\'e}}d{{\'e}}rique Watbled.
\newblock Exponential inequalities for martingales and asymptotic properties of
  the free energy of directed polymers in a random environment.
\newblock {\em Stochastic Process. Appl.}, 119(10):3101--3132, 2009.

\bibitem{LyZy20}
Dimitris Lygkonis and Nikos Zygouras.
\newblock Edwards-wilkinson fluctuations for the directed polymer in the full
  $l^2$-regime for dimensions $d \geq 3$.
\newblock 2020.

\bibitem{lyons1990random}
Russell Lyons.
\newblock Random walks and percolation on trees.
\newblock {\em The annals of Probability}, pages 931--958, 1990.

\bibitem{LyonsPemantlePeresBiased}
Russell Lyons, Robin Pemantle, and Yuval Peres.
\newblock Biased random walks on galton-watson trees.
\newblock {\em Probability Theory and Related Fields}, 106, 10 1996.

\bibitem{LPbook}
Russell Lyons and Yuval Peres.
\newblock {\em Probability on trees and networks}, volume~42 of {\em Cambridge
  Series in Statistical and Probabilistic Mathematics}.
\newblock Cambridge University Press, New York, 2016.

\bibitem{MaUn18}
Jacques Magnen and J{\'e}r{\'e}mie Unterberger.
\newblock The scaling limit of the {KPZ} equation in space dimension 3 and
  higher.
\newblock {\em Journal of Statistical Physics}, 171(4):543--598, May 2018.

\bibitem{mathieu}
Pierre Mathieu.
\newblock Carne-{V}aropoulos bounds for centered random walks.
\newblock {\em Ann. Probab.}, 34(3):987--1011, 2006.

\bibitem{peres2008central}
Yuval Peres and Ofer Zeitouni.
\newblock A central limit theorem for biased random walks on galton--watson
  trees.
\newblock {\em Probability Theory and Related Fields}, 140(3-4):595--629, 2008.

\bibitem{seroussi2018spectral}
Inbar Seroussi and Nir Sochen.
\newblock Spectral analysis of a non-equilibrium stochastic dynamics on a
  general network.
\newblock {\em Scientific reports}, 8(1):1--10, 2018.

\bibitem{Shi-StFl}
Zhan Shi.
\newblock {\em Branching random walks}, volume 2151 of {\em Lecture Notes in
  Mathematics}.
\newblock Springer, Cham, 2015.
\newblock Lecture notes from the 42nd Probability Summer School held in Saint
  Flour, 2012.

\bibitem{Sznitman}
Alain-Sol Sznitman.
\newblock {\em Brownian motion, obstacles and random media}.
\newblock Springer Monographs in Mathematics. Springer-Verlag, Berlin, 1998.

\bibitem{Viveros20}
Roberto Viveros.
\newblock Directed polymer for very heavy tailed random walks, 2020.

\bibitem{W00}
Wolfgang Woess.
\newblock {\em Random walks on infinite graphs and groups}, volume 138 of {\em
  Cambridge Tracts in Mathematics}.
\newblock Cambridge University Press, Cambridge, 2000.

\end{thebibliography}
}

\end{document}